\numberwithin{equation}{section}
\newtheorem{theoremcounter}{theoremcounter}[section]
\newtheorem{conjecture}[theoremcounter]{Conjecture}
\newtheorem{corollary}[theoremcounter]{Corollary}
\newtheorem{definition}[theoremcounter]{Definition}
\newtheorem{lemma}[theoremcounter]{Lemma}
\newtheorem{proposition}[theoremcounter]{Proposition}
\newtheorem{remark}[theoremcounter]{Remark}
\newtheorem{theorem}[theoremcounter]{Theorem}
\renewcommand{\frak}{\ensuremath{\mathfrak}}
\newcommand{\cal}{\ensuremath{\mathcal}}
\newcommand{\bboard}{\ensuremath{\mathbb}}
\newcommand{\frake}{\ensuremath{\frak{e}}}
\newcommand{\cF}{\ensuremath{\cal{F}}}
\newcommand{\cL}{\ensuremath{\cal{L}}}
\newcommand{\cT}{\ensuremath{\cal{T}}}
\newcommand{\bbH}{\ensuremath{\bboard H}}
\newcommand{\rmH}{\ensuremath{\mathrm{H}}}
\newcommand{\rmJ}{\ensuremath{\mathrm{J}}}
\newcommand{\rmM}{\ensuremath{\mathrm{M}}}
\newcommand{\rmt}{\ensuremath{\mathrm{t}}}
\newcommand{\ZZ}{\ensuremath{\mathbb{Z}}}
\newcommand{\QQ}{\ensuremath{\mathbb{Q}}}
\newcommand{\RR}{\ensuremath{\mathbb{R}}}
\newcommand{\CC}{\ensuremath{\mathbb{C}}}
\renewcommand{\pmod}[1]{\ensuremath{\;(\mathrm{mod}\, #1)}}
\newcommand{\Mat}[1]{\ensuremath{\mathrm{Mat}_{#1}}}
\newcommand{\MatT}[1]{\ensuremath{\mathrm{Mat}^\rmt_{#1}}}
\newcommand{\GL}[1]{\ensuremath{\mathrm{GL}_{#1}}}
\newcommand{\SL}[1]{\ensuremath{\mathrm{SL}_{#1}}}
\newcommand{\Mp}[1]{\ensuremath{\mathrm{Mp}_{#1}}}
\newcommand{\Sp}[1]{\ensuremath{\mathrm{Sp}_{#1}}}
\newcommand{\Orth}[1]{\ensuremath{\mathrm{O}_{#1}}}
\newcommand{\rT}{\ensuremath{\hspace{0.12em}{}^\rmt\hspace{-0.09em}}}
\newcommand{\tr}{\ensuremath{\mathrm{tr}}}
\newcommand{\rk}{\ensuremath{\mathop{\mathrm{rk}}}}
\newcommand{\slashdiv}{\ensuremath{\mathop{/}}}
\newcommand{\lspan}{\ensuremath{\mathop{\mathrm{span}}}}
\newcommand{\HS}{\mathbb{H}}
\newcommand{\td}{\tilde}
\newcommand{\disc}{\ensuremath{{\rm disc}}}
\newcommand{\tGamma}{\ensuremath{\widetilde{\Gamma}}}
\newcommand{\rd}{\ensuremath{\mathop{\rm rd}}}
\newcommand{\md}{\ensuremath{\mathop{\rm md}}}
\newcommand{\CH}{\ensuremath{\mathrm{CH}}}
\begin{document}

\title{Spans of special cycles of codimension less than~$5$}

\author{Martin Raum}
\address{ETH, Dept. Mathematics, Rämistraße 101, CH-8092, Zürich, Switzerland}
\email{martin.raum@math.ethz.ch}
\urladdr{http://www.raum-brothers.eu/martin/}
%\curraddr[]{}
\thanks{The author is supported by the ETH Zurich Postdoctoral Fellowship Program and by the Marie Curie Actions for People COFUND Program.}
%\thanks{This paper is in final form and no version of it will be submitted for
%publication elsewhere.}

\subjclass{Primary 14C15; Secondary 11F50, 11F30} %
\keywords{special cycles, Chow groups, vanishing of Jacobi forms} %
%\dedicatory{Dedicated to Professor XY on the occasion of his seventieth birthday.}

\begin{abstract}
We show that the span of special cycles in the $r$th Chow group of a Shimura variety of orthogonal type is finite dimensional, if $r < 5$.  As our main tool, we develop the theory of Jacobi forms with rational index $M \in \Mat{N}(\QQ)$.
\end{abstract}

% \date{\today}
\maketitle

\section{Introduction}
\label{sec:introduction}

In 1956, Chow introduced cohomology groups attached to any variety~$X$~\cite{Ch56}, which, by now, are called Chow groups and are denoted by~$\CH^r(X)$.  They are constructed as formal sums up to rational equivalence of subvarieties of constant codimension~$r$.  The first Chow group equals the Picard group, which in interesting cases classifies holomorphic vector bundles up to isomorphisms.  While Picard groups are difficult to study, our knowledge about Chow groups is even more restricted.  Even their rank stays mysterious in most cases.

In 1998, Kudla suggested to consider ``special cycles''~\cite{Ku97}, that he constructed on Shimura varieties of orthogonal type.  These cycles, he conjectured, behave much better than general cycles.  As an instance of this rather philosophical statement, he proposed that the generating function of special cycles of codimension~$r$ is modular.  In particular, the span of special cycles, he predicted, is finite dimensional.
\begin{conjecture}
Given a Shimura variety~$X_\Gamma$ of orthogonal type, the generating function of codimension~$r$ cycles on~$X_\Gamma$ is a Siegel modular form of degree~$r$.

In particular, the span of special cycles in $\CH^r(X_\Gamma)_\CC$ has finite dimension.
\end{conjecture}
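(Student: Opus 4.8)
The plan is to prove Kudla's modularity statement itself for $r<5$ and to deduce finite-dimensionality from it. Concretely, I would show that for every linear functional $\lambda$ on $\CH^r(X_\Gamma)_\CC$ the series $\lambda\bigl(\sum_{T}Z(T)\,e(\tr(T\tau))\bigr)$ --- the sum over admissible positive semi-definite rational symmetric $r\times r$ matrices $T$, with $\tau$ in the genus-$r$ Siegel upper half space --- is a holomorphic Siegel modular form of genus $r$ and of the weight $k$ attached to $X_\Gamma$ by Kudla's prediction. Granting this, set $A^{(r)}(\tau)=\sum_T Z(T)\,e(\tr(T\tau))$, put $d=\dim M_k(\Sp{2r})$, and pick indices $T^{(1)},\dots,T^{(d)}$ for which $F\mapsto\bigl((F)_{T^{(1)}},\dots,(F)_{T^{(d)}}\bigr)$ is injective on $M_k(\Sp{2r})$. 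Then for every $T$ the cycle $Z(T)$ is a fixed $\CC$-linear combination of $Z(T^{(1)}),\dots,Z(T^{(d)})$, so the span of all special cycles has dimension at most $d$. Thus the real content is modularity.

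First I would dispose of the degenerate indices: if $\rk T=t<r$, then by Kudla's construction $Z(T)$ is the push-forward of a codimension-$t$ special cycle from a sub-Shimura variety of orthogonal type of corank $t$, and these span a finite-dimensional space (by induction on $r$, or because they are governed by the lower-codimension cases), so we may assume $T$ positive definite. For positive definite $T$ I would pass to the Fourier--Jacobi expansion along the block splitting $\tau=\left(\begin{smallmatrix}\tau_1&z\\\rT z&\tau_2\end{smallmatrix}\right)$, with $\tau_1$ a single variable, $\tau_2$ of genus $r-1$, and $z\in\CC^{r-1}$, so that $A^{(r)}=\sum_{M}\phi_M(\tau_1,z)\,e(\tr(M\tau_2))$ with $M$ ranging over positive semi-definite rational symmetric $(r-1)\times(r-1)$ matrices. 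When $M$ is invertible, Kudla's relations express the Fourier coefficient of $\phi_M$ at $(t_{11},n)$, namely $Z\!\left(\begin{smallmatrix}t_{11}&\rT n\\ n&M\end{smallmatrix}\right)$, as the push-forward to $X_\Gamma$ of the special divisor of discriminant $t_{11}-\rT n\,M^{-1}n$ on the sub-Shimura variety $Z(M)$ cut out by $r-1$ special divisors with Gram matrix $2M$, up to corrections of lower rank; hence $\phi_M$ is, after push-forward, the product of the generating series of special divisors on $Z(M)$ with a suitable theta series. By the modularity of that generating series --- the codimension-one case, due to Borcherds --- this exhibits $\phi_M$ as a genuine Jacobi form of weight $k$ and of the rational index $M\in\Mat{r-1}(\QQ)$, with coefficients in an already finite-dimensional space of special cycles. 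This is the point at which the theory of Jacobi forms with rational index, developed in this paper, becomes the essential tool.

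The main step --- and the expected main obstacle --- is to recombine the family $(\phi_M)_M$ into a single genus-$r$ Siegel modular form, which then yields modularity after applying $\lambda$. For this I would establish, inside the rational-index theory, two facts: that the space of Jacobi forms of a given weight and a given rational index is finite dimensional, and, crucially, that a formal Fourier--Jacobi series whose coefficients are such Jacobi forms and which is compatible under interchanging the genus-one and genus-$(r-1)$ blocks is the Fourier--Jacobi expansion of an honest holomorphic Siegel modular form of genus $r$. The rigidity needed for this symmetrisation holds only when the spaces of Jacobi forms of rational index involved carry no unexpected forms destroying uniqueness; the indices occurring have rank at most $r-1$, and I expect the required vanishing of Jacobi forms of rational index --- of rank at most $3$ and of weight $k$ --- to be provable precisely when $r-1\le 3$, i.e.\ $r<5$, beyond which the analogous vanishing no longer holds. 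Proving that vanishing theorem for Jacobi forms of rational index, and making the formal generating-series manipulations rigorous in the Chow group, is where I expect the bulk of the effort to go.
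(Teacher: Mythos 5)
Your overall plan is to prove the full modularity assertion of the conjecture for $r<5$ and then read off finite-dimensionality from $\dim M_k(\Sp{2r})<\infty$. The paper does something strictly weaker and quite different: it never proves (and does not claim) that the genus-$r$ generating function is a Siegel modular form; it only establishes the ``in particular'' clause, finite-dimensionality of the span, for $r<5$. Your final deduction (pick $T^{(1)},\dots,T^{(d)}$ detecting $M_k(\Sp{2r})$, conclude every $Z(T)$ lies in their span) is fine \emph{granting} modularity, and your identification of the Fourier--Jacobi coefficients $\phi_M$ as Jacobi forms of rational index $M$ with cycle-valued coefficients is exactly the input the paper takes from Zhang (Proposition~\ref{prop:zhangs-modularity}). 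The gap is in what you yourself flag as the main step: the claim that a compatible formal Fourier--Jacobi series whose coefficients are Jacobi forms of rational index must be the expansion of an honest holomorphic Siegel modular form. That rigidity statement does not follow from finite-dimensionality or vanishing theorems for Jacobi forms of rational index; it is a far deeper assertion about formal Fourier--Jacobi series (requiring input from the geometry of the Siegel modular variety), and nothing in this paper supplies it. Your proposed mechanism for the restriction $r<5$ --- that the symmetrisation rigidity fails beyond rank $3$ --- is also not where the actual restriction comes from.

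What the paper does instead is apply a linear functional $f$ to the cycle-valued Jacobi forms $\theta_{\mu,\mu',M}$ \emph{one index $M$ at a time}, never recombining them. The engine is a vanishing theorem (Proposition~\ref{prop:vanishing-of-jacobi-forms}): a Jacobi form of rational index $M$ vanishes once its coefficients $c(\phi;m,r)$ vanish for $m<\frac{k}{12}+1+\frac12\rd(M)$. Combined with the lattice-geometric inequality $\rd(M)\le\frac{N(N+1)}{8}\md(M)$ (Proposition~\ref{prop:relation-of-rd-and-md}), which gives $\rd(M)<2\,\md(M)$ precisely for $N=r-1\le 3$, one runs an induction on the diagonal entries of $M$: coefficients of $f(\theta_{\mu,\mu',M})$ in the critical range $m<\md(M)$ are re-identified as coefficients of $f(\theta_{\cdot,\cdot,M'})$ for a matrix $M'$ with strictly smaller diagonal, which vanish by induction, so $f(\theta_{\mu,\mu',M})=0$. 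This is why $r<5$ appears, and why no Siegel modularity is needed. If you want to salvage your route you would have to prove the formal Fourier--Jacobi rigidity theorem, which is a separate and substantially harder project than anything in this paper.
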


In parallel, to Kudla's work, Borcherds finished his studies of product expansions of automorphic forms~\cite{Bo98}.  As a result, he was able to resolve completely Kulda's conjecture in the case of~$r = 1$.
\begin{theorem}[{Borcherds~\cite{Bo99, Bo00}}]
For $r = 1$, Kudla's conjecture is true.
\end{theorem}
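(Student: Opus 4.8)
The plan is to reconstruct Borcherds' argument~\cite{Bo98,Bo99,Bo00}, whose core is a two-step mechanism: first reduce the modularity of the generating series of codimension\nbd one special cycles to the vanishing, in the rational Chow group, of an explicit family of divisor relations; then realize those relations as divisors of meromorphic automorphic forms obtained from a regularized theta lift; finally deduce finite dimensionality of the span from modularity by a soft argument. Concretely, realize $X_\Gamma$ as $\Gamma\backslash D$, with $D$ the hermitian period domain of an even lattice $L$ of signature $(n,2)$, and write $Z(m,\beta)\subseteq X_\Gamma$ for the special (``Heegner'') divisor of discriminant $m\in\QQ_{>0}$ and coset $\beta\in L^\vee/L$. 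Let $[\omega]\in\CH^1(X_\Gamma)_\CC$ be the class of the tautological (Hodge) line bundle and form
\[ F(\tau)\;=\;c\,[\omega]\,\mathfrak e_0\;+\;\sum_{\beta\in L^\vee/L}\ \sum_{m>0} Z(m,\beta)\,q^{m}\,\mathfrak e_\beta,\qquad q=e^{2\pi i\tau}, \]
for a suitable explicit rational constant $c$. The $r=1$ case of Kudla's conjecture is the statement that $F$ is a (vector\nbd valued) modular form of weight $1+n/2$ and type $\rho_L$, the Weil representation attached to $L$ --- equivalently a Siegel modular form of degree $1$ --- and the finite dimensionality of the span in $\CH^1(X_\Gamma)_\CC$ will then be a formal consequence.

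For the reduction I would invoke Borcherds' duality criterion: a formal $q$\nbd expansion with coefficients in an arbitrary $\CC$\nbd vector space transforms like a modular form of weight $k$ and type $\rho_L$ if and only if, for every weakly holomorphic modular form $f$ of complementary weight $2-k$ and dual type $\rho_L^\vee$ with Fourier coefficients $c_f(m,\beta)$, the finite sum pairing the principal part of $f$ against the holomorphic part of the expansion vanishes (this is Serre duality on the modular curve, together with the vanishing of the constant term of a weakly holomorphic weight\nbd two form). Applying it to $F$ one linear functional at a time, modularity of $F$ becomes equivalent to the family of identities
\[ c\,c_f(0,0)\,[\omega]\;+\;\sum_{\beta}\ \sum_{m>0}c_f(-m,\beta)\,Z(m,\beta)\;=\;0\quad\text{in }\CH^1(X_\Gamma)_\CC, \]
over all weakly holomorphic $f$ of weight $1-n/2$ and type $\rho_L^\vee$. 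These identities are precisely what the \emph{singular theta lift} produces: for such an $f$ with integral principal part, the regularized integral of $f$ against the Siegel theta kernel $\Theta_L$ exponentiates to a meromorphic modular form $\Psi_f$ on $X_\Gamma$ of weight $\tfrac12 c_f(0,0)$ whose divisor equals $\tfrac12\sum_{m>0,\beta}c_f(-m,\beta)\,Z(m,\beta)$; since a weight\nbd$w$ modular form has divisor class $w[\omega]$, comparing divisor classes yields the displayed identity once $c$ is normalized correctly, and the general $f$ follows by $\CC$\nbd linearity in the principal part. Hence $F$ is modular.

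The passage from modularity to finiteness is purely formal. Let $W=\mathrm{span}_\CC\{Z(m,\beta)\}\subseteq\CH^1(X_\Gamma)_\CC$ and let $M=M_{1+n/2}(\rho_L)$, which is finite dimensional. A modular form in $M$ is determined by finitely many of its Fourier coefficients, so there is a finite set $S$ of pairs $(m,\beta)$ for which the coefficient functionals $g\mapsto c_g(m,\beta)$, $(m,\beta)\in S$, form a basis of $M^{*}$; consequently every coefficient functional on $M$ is a fixed linear combination of those indexed by $S$. For each linear functional $\lambda$ on $\CH^1(X_\Gamma)_\CC$ the series $\lambda\circ F$ is holomorphic and transforms in $M$, so applying the previous sentence to $\lambda\circ F$ and letting $\lambda$ vary shows that every $Z(m,\beta)$ is the corresponding fixed linear combination of the $Z(m',\beta')$ with $(m',\beta')\in S$. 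Therefore $W$ is spanned by finitely many special divisor classes and $\dim W\le\dim M<\infty$, which is Kudla's conjecture for $r=1$.

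The hard part is the construction of the Borcherds product $\Psi_f$: one must regularize the divergent integral $\int_{\mathrm{SL}_2(\ZZ)\backslash\mathbb H}\langle f(\tau),\Theta_L(\tau,z)\rangle\,\tfrac{du\,dv}{v^2}$, determine its singular locus by Poisson summation and unfolding near the rational boundary of $D$ (showing it is exactly the union of the relevant $Z(m,\beta)$, with the correct multiplicities), and verify that the harmonic part of the regularized integral is $\log|\Psi_f|$ for a genuinely single\nbd valued meromorphic section with the asserted divisor and weight. Everything else --- the duality criterion, the deduction of finiteness, and the reduction from an arbitrary Shimura variety of orthogonal type to a fixed lattice --- is comparatively soft, although some care is needed in working with $\QQ$\nbd coefficients on a smooth toroidal model so that $\CH^1$, the weight, and $[\omega]$ are literally defined, in bookkeeping the boundary contributions to $\mathrm{div}\,\Psi_f$ there, and in matching normalizations so that $c\,[\omega]\,\mathfrak e_0$ is the correct constant term of $F$.
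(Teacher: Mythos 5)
The paper offers no proof of this statement---it is quoted directly from Borcherds~\cite{Bo99, Bo00}---and your reconstruction is an accurate outline of Borcherds' actual argument: Serre duality on the modular curve reduces modularity of the formal generating series to a family of linear relations among the Heegner divisor classes and the Hodge class, the singular theta lift (Borcherds products) realizes exactly those relations as divisors of meromorphic automorphic forms, and finiteness of the span then follows formally from $\dim M_{1+n/2}(\rho_L)<\infty$. The one point you gloss over with ``by $\CC$\nbd linearity in the principal part'' is that deducing the relation for an \emph{arbitrary} weakly holomorphic $f$ from the integral-principal-part case requires the obstruction space to be spanned by forms with rational Fourier coefficients, which is precisely the issue addressed by the correction~\cite{Bo00}.
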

\noindent Zhang dedicated his thesis to the above conjecture, and he was able to obtain partial results.
\begin{theorem}[Zhang~\cite{Zh09}]
If $r = 2$, the span of special cycles in $\CH^r(X_\Gamma)$ has finite dimension.
\end{theorem}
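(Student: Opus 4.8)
The plan is to prove the sharper statement that the degree-two generating series
\[
\phi_2(\tau)\;=\;\sum_{T\ge 0} Z(T)\,e^{2\pi i\,\tr(T\tau)},\qquad \tau\in\HS_2 ,
\]
(the cycles $Z(T)=Z(T,\varphi)$ being assembled, over the finite-dimensional space $\CC[L'/L]^{\otimes 2}$ attached to the underlying even lattice $L$, into a single vector-valued object) is a Siegel modular form of degree~$2$ and some fixed weight $\kappa$ with values in $\CH^2(X_\Gamma)_\CC$. Once this is established, the theorem follows at once: applying an arbitrary $\CC$-linear functional to $\phi_2$ turns it into a scalar Siegel modular form of weight $\kappa$ and fixed level, which is determined by boundedly many of its Fourier coefficients; chasing this back, the span of the $Z(T)$ is finite-dimensional.

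I would set up the argument from four ingredients. First, the theorem of Borcherds quoted above gives that $\phi_1$ is modular, so already the span $U\subseteq\CH^1(X_\Gamma)_\CC$ of \emph{all} codimension-one special cycles is finite-dimensional. Second, Kudla's description of cycles with degenerate index expresses $Z(T)$ for $\rk T\le 1$ through a codimension-one special cycle and the tautological class $\lambda$, so the coefficients of $\phi_2$ supported on the boundary of the Siegel cone lie in the finite-dimensional space $\lambda\cdot U+\CC\lambda^2$. Third, the invariance $e^{2\pi i\,\tr(T\tau)}=e^{2\pi i\,\tr(T(\tau+B))}$ for integral symmetric $B$, together with the equivariance $Z(\gamma^{t}T\gamma)=\rho(\gamma)Z(T)$ for $\gamma\in\GL{2}(\ZZ)$ under the Weil representation $\rho$ on $\CC[L'/L]^{\otimes 2}$, shows that $\phi_2$ transforms correctly under the Siegel parabolic of $\Sp{4}(\ZZ)$; equivalently, each Fourier--Jacobi coefficient
\[
\phi_{2,m}(\tau_1,z)\;=\;\sum_{a,\,b}Z\!\left(\begin{smallmatrix} a & b/2\\ b/2 & m\end{smallmatrix}\right)e^{2\pi i(a\tau_1+bz)}\qquad(m\in\QQ_{>0})
\]
satisfies the elliptic transformation law of a Jacobi form of index $m$. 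Fourth, Kudla's product formula, expressing the intersection
\[
Z(t_1,\varphi_1)\cdot Z(t_2,\varphi_2)\;=\;\sum_{\substack{b\in\QQ\\ b^2\le 4t_1t_2}}Z\!\left(\begin{smallmatrix}t_1 & b/2\\ b/2 & t_2\end{smallmatrix},\varphi_1\otimes\varphi_2\right)
\]
as a finite sum in $\CH^2(X_\Gamma)_\CC$, identifies the restriction of $\phi_2$ to the diagonal $\HS_1\times\HS_1\hookrightarrow\HS_2$ with the product $\phi_1(\tau_1;\varphi_1)\,\phi_1(\tau_2;\varphi_2)$ of Borcherds' forms; in particular $\phi_{2,m}(\tau_1,0)=Z(m,\varphi_2)\cdot\phi_1(\tau_1;\varphi_1)$ is modular in $\tau_1$.

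The remaining and decisive step is to upgrade these partial facts — that $\phi_2$ is modular under the Siegel parabolic and restricts to a modular form along the diagonal — to full modularity under $\Sp{4}(\ZZ)$. Since the elliptic transformations already force a theta decomposition $\phi_{2,m}=\sum_{\mu\bmod 2m}h^{(m)}_\mu\,\theta_{m,\mu}$ with formal $q$-series $h^{(m)}_\mu$ valued in $\CH^2(X_\Gamma)_\CC$, it is enough to prove that each $h^{(m)}_\mu$ is a vector-valued elliptic modular form of weight $\kappa-\tfrac12$; the product formula should deliver this by pinning down enough linear combinations of the $h^{(m)}_\mu$ — via evaluations of $\phi_{2,m}$ at torsion points $z\in\tfrac1{2m}(\ZZ\tau_1+\ZZ)$, which (after integral changes of variable) it rewrites in terms of products of degree-one generating series — and then transporting the result through the metaplectic transformation law of the $\theta_{m,\mu}$. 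Because the indices $m$, and with them the upper-left entries of $T$, range over $\tfrac1N\ZZ$ rather than $\ZZ$, one is forced to develop and use the theory of Jacobi forms of rational index — exactly the tool the paper builds — after which, once $\phi_{2,m}$ is recognized as a genuine Jacobi form for every $m$, a structure theorem identifying such ``formal'' Siegel series with honest Siegel modular forms closes the argument.

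I expect the main obstacle to be precisely this passage carried out \emph{uniformly in the index} $m$: the product formula controls only boundedly many Taylor coefficients of $\phi_{2,m}$ along $z=0$, and one must propagate this to all of $\phi_{2,m}$ in a way compatible with the metaplectic theta transformations as $m\to\infty$. A bookkeeping that is transparent for a single small index becomes unmanageable without a systematic framework, and assembling that framework — Jacobi forms of rational matrix index and their relation to modular forms of lower degree — is the heart of the proof; it is also where the growth of the resulting estimates will, when the method is pushed to larger codimension, eventually force the restriction $r<5$.
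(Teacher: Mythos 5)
Your plan rests on proving that the degree-two generating series $\phi_2$ is a full Siegel modular form for $\Sp{4}(\ZZ)$, and you yourself flag the passage from the parabolic and Fourier--Jacobi transformation laws to genuine $\Sp{4}(\ZZ)$-modularity as ``the remaining and decisive step.'' That step is precisely the open part of Kudla's conjecture in degree two; it is established neither in Zhang's thesis nor in this paper, and your sketch of it does not close the gap. The product formula and the evaluations at torsion points $z_0\in\frac{1}{2m}(\ZZ\tau+\ZZ)$ only pin down the finitely many values $\sum_\mu h^{(m)}_\mu(\tau)\,\theta_{m,\mu}(\tau,z_0)$; this neither separates the individual components $h^{(m)}_\mu$ nor shows they are modular, and ``transporting the result through the metaplectic transformation law of the $\theta_{m,\mu}$'' is exactly the assertion that needs proof. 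Moreover the input you are trying to manufacture is already available for free: Borcherds' theorem applied fibrewise over the codimension-one cycles (Proposition~\ref{prop:zhangs-modularity}, i.e.\ Zhang's Proposition~2.6) gives directly that each Fourier--Jacobi generating series is a genuine vector-valued Jacobi form of rational index --- $\SL{2}$-part included --- so no Siegel modularity is needed at any point.

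The more serious omission is the mechanism that actually produces finite-dimensionality. Knowing that each $\theta_{\mu,\mu',m}$ is a Jacobi form gives, via theta decomposition, a Sturm-type vanishing bound --- but that bound grows with the index, roughly like $\frac{k}{12}+1+\frac12\rd(M)$ for $M=(2m)$, so controlling each index separately still leaves infinitely many coefficients unaccounted for. What closes the argument (in Zhang and in Theorem~\ref{thm:generators-for-spans-of-special-cycles} here) is an induction on $m$: the invariance $Z(\rT\gamma T\gamma,\mu)=Z(T,\mu)$ for $\gamma\in\GL{2}(\ZZ)$ lets one swap the two diagonal entries of $T$, so any coefficient of $\theta_{\mu,\mu',m}$ with upper-left entry smaller than $\md(M)$ is simultaneously a coefficient of a generating series of strictly smaller index; since $\rd(M)<2\,\md(M)$, the vanishing threshold for index $m$ is eventually absorbed by smaller indices, and only the finitely many $T$ with both diagonal entries below a fixed bound remain as generators. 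Your proposal contains neither this induction nor any substitute for it, so even granting your ``decisive step'' the argument as written would not reduce the span to finitely many cycles.
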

In this paper, we extend the range for which this statement holds.
\begin{theorem}
If $r < 5$, the span of special cycles in $\CH^r(X_\Gamma)$ has finite dimension.
\end{theorem}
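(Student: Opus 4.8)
The plan is to argue by induction on~$r$. The cases $r=1$ and $r=2$ are the theorems of Borcherds and Zhang recalled above, so fix $r$ with $2 < r < 5$ and assume the assertion in all codimensions $r' < r$, uniformly over Shimura varieties of orthogonal type; since the inductive step forces us to work on the orthogonal Shimura varieties underlying special divisors of $X_\Gamma$, which have one smaller signature, it is this uniform form of the hypothesis that we need. Write $\phi = \sum_{T \ge 0} Z(T)\, q^{T}$ for the formal generating series of special cycles of codimension~$r$, valued in $\CH^{r}(X_\Gamma)_\CC$ and indexed by positive semidefinite half-integral symmetric $r \times r$ matrices $T$. Because the span of the $Z(T)$ coincides with the span of the Fourier coefficients of all $\Sp{2r}(\ZZ)$-translates of $\phi$, it suffices to exhibit one finite dimensional subspace of $\CH^{r}(X_\Gamma)_\CC$ containing every $Z(T)$.

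First I would dispose of the degenerate Fourier coefficients. For $T$ of rank $j < r$, Kudla's definition writes $Z(T)$ as the product of $Z(T_{0})$, for a nondegenerate $j\times j$ representative $T_{0}$, with a fixed power of the first Chern class of the tautological bundle on $X_\Gamma$; by the inductive hypothesis the $Z(T_{0})$ span a finite dimensional space, so by the projection formula so do the $Z(T)$ with $\rk T < r$. It therefore remains to handle the part $\phi^{\mathrm{reg}}$ of $\phi$ supported on nondegenerate $T$.

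For $\phi^{\mathrm{reg}}$ I would pass to a Fourier--Jacobi expansion that peels off all but one of the $r$ Siegel variables, writing the degree-$r$ variable in terms of a single variable $\tau_{1}$, a degree-$(r-1)$ variable $\tau'$, and a coupling $z$. This yields $\phi = \sum_{M \ge 0} \varphi_{M}(\tau_{1},z)\,e^{2\pi i\,\tr(M\tau')}$, the sum over positive semidefinite half-integral symmetric $(r-1)\times(r-1)$ matrices $M$, and on $\phi^{\mathrm{reg}}$ only nondegenerate $M$ occur. The crux of the matter — and the reason the paper develops the theory of Jacobi forms with index $M\in\Mat{r-1}(\QQ)$ — is that each $\varphi_{M}$ is a genuine Jacobi form of degree one, of fixed weight $\kappa = 1 + \tfrac12\dim X_\Gamma$ and rational matrix index $M$: its dependence on $\tau_{1}$ is, through Kudla's geometric description of these coefficients, governed by the degree-one generating series of special divisors on the orthogonal Shimura variety attached to a vector configuration of Gram matrix $M$, which is modular by Borcherds; establishing this in a form usable on all of $X_\Gamma$ draws on the inductive hypothesis. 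Invoking the theta decomposition for Jacobi forms of rational matrix index established in the paper, one then writes $\varphi_{M} = \sum_{\mu} h_{M,\mu}\,\vartheta_{M,\mu}$ with finitely many $\mu$ and each $h_{M,\mu}$ an elliptic modular form of weight $\kappa - \tfrac{r-1}{2}$ on a congruence subgroup determined by $M$; tracing through the geometry, the classes carried by the $h_{M,\mu}$ lie in the product of the finite dimensional span of codimension-$(r-1)$ special cycles (finite dimensional by induction) with the finite dimensional span of special divisors (finite dimensional by Borcherds). Call this fixed finite dimensional space $W$.

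What remains is to control the sum over the infinitely many $M$, and this is where I expect the genuine difficulty — and the restriction $r < 5$ — to lie. One needs a vanishing and rigidity theorem for Jacobi forms of rational matrix index to the effect that, once the index has rank $r-1 \le 3$, all but finitely many of the coefficients $\varphi_{M}$ are forced, by their automorphy alone, to contribute no classes beyond $W$; equivalently, the space of formal Fourier--Jacobi series of degree $r$ with the requisite transformation behaviour is itself finite dimensional. (Comparing with the unconditional modularity of the generating series in cohomology, after Kudla and Millson, and bounding the span of homologically trivial special cycles, should give a complementary grip on the same point.) For index rank at most three the relevant Jacobi forms and their theta decompositions are rigid enough for this to succeed; at rank four — that is, codimension $r = 5$ — the analogue fails, and that is the barrier. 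Granting it, $\phi^{\mathrm{reg}}$, and hence $\phi$, takes values in a single finite dimensional subspace of $\CH^{r}(X_\Gamma)_\CC$, which is the assertion. The main obstacle is thus the vanishing theory for Jacobi forms of rational matrix index of rank at most three, to which the bulk of the paper is devoted; the cycle-theoretic inputs (Kudla's cycle calculus, the projection formula, Borcherds' theorem) and the bookkeeping of the Fourier--Jacobi expansion are by comparison routine.
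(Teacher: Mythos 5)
You have the right skeleton --- Fourier--Jacobi coefficients $\theta_{\mu,\mu',M}$ of the generating series as Jacobi forms of weight $1+\tfrac n2$ and rational matrix index $M\in\MatT{r-1}(\QQ)$, theta decomposition to vector valued elliptic modular forms, and a vanishing theorem as the engine --- but the step you label ``Granting it'' is precisely the content of the proof, and what you grant is not quite the right statement. The vanishing theorem one actually gets from the theta decomposition (Proposition~\ref{prop:vanishing-of-jacobi-forms}) has a threshold depending on the index: $\phi\in\rmJ_{k,M}(\rho)$ vanishes once $c(\phi;m,p)=0$ for all $m<\tfrac k{12}+1+\tfrac12\rd(M)$, where $\rd(M)$ is a covering-radius-type invariant of the lattice with Gram matrix $M$. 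Since $\rd(M)$ grows with $M$, this does not by itself force ``all but finitely many $\varphi_M$'' to contribute nothing; one must close the loop. The paper does this with two further ingredients you do not supply: (a) an induction on the diagonal entries of $M$ (not on the codimension $r$), which uses the $\GL{r}(\ZZ)$-invariance of $Z(T,\mu)$ to swap the distinguished slot $m$ of $T=\left(\begin{smallmatrix}m&\ast\\ \ast&M\end{smallmatrix}\right)$ against the largest diagonal entry $\md(M)$ of $M$, so that a linear functional killing all cycles with $m,\md(M)<B+\tfrac12\rd(M)$ is shown to kill every $\theta_{\mu,\mu',M}$; and (b) the reduction-theoretic inequality $\rd(M)\le\tfrac{N(N+1)}8\,\md(M)$ for $N=r-1$, which turns the self-referential bound $\md(M)<B+\tfrac12\rd(M)$ into an absolute bound on $\md(M)$ --- and hence into finiteness of the relevant $\GL{r-1}(\ZZ)$-classes --- exactly when $\tfrac{N(N+1)}{16}<1$, i.e.\ $N\le3$, i.e.\ $r<5$.

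This also means your diagnosis of the barrier is off: it is not that ``the relevant Jacobi forms and their theta decompositions'' lose rigidity at index rank $4$ (the entire Jacobi-form theory of the paper, including the theta decomposition and Proposition~\ref{prop:vanishing-of-jacobi-forms}, is valid for every $N$); it is the lattice-geometric comparison of $\rd$ with $\md$ that ceases to give a constant below $2$ at rank $4$. Two smaller points: the span in Theorem~\ref{thm:main-theorem-finite-span-of-cycles} is taken over positive definite $T$ only, so your paragraph on degenerate $T$ via the projection formula is not needed; and the paper runs no induction on $r$ over auxiliary Shimura varieties --- Borcherds' and Zhang's results enter only through the modularity of $\theta_{\mu,\mu',M}$ (Proposition~\ref{prop:zhangs-modularity}), not as base cases. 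As written, your argument would prove the theorem only if the black-boxed rigidity statement were true in the form you state it, and the reason it is true for $r<5$ is the quantitative lattice geometry you have omitted.
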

\noindent A more detailed statement can be found in Theorem~\ref{thm:main-theorem-finite-span-of-cycles}.

Zhang used an ad hoc method to prove his result, and on page~35 he stated that it ``does not generalize to higher codimension cycles''.  We show that similar ideas allow to cover all cases $r < 5$.  Zhang employed induction and standard vanishing results for elliptic modular forms of higher level.  In his proof, these elliptic modular forms arose form Jacobi forms, which he did not consider directly.  The key to our result is not to pass to elliptic modular forms, but study Jacobi forms directly.  This leads us to develop a theory of vector valued Jacobi forms whose indices are matrices with rational entries.
\vspace*{1ex}

Jacobi forms show up in many important places, including the proof of the Saito-Kurokawa conjecture~\cite{An79, Ma79a, Ma79b, Ma79c, Za81} and modern string theory~\cite{DG07, CD12, DMZ11}.  They were originally defined as functions on $\HS \times \CC$ with an index~$m \in \ZZ$ attached to them.  Later, this notion was extended to functions on $\HS \times \CC^N$ with matrix index~$M \in \Mat{N}(\ZZ)$~\cite{Zi89, Sk08}.  A further generalization it shows up naturally while studying theta lifts or special cycles.  We extend the theory of Jacobi forms to indices~$M \in \Mat{N}(\QQ)$.  We show that there is an analog of the theta decomposition, which links Jacob forms to vector valued elliptic modular forms.  Theta decompositions of usual Jacobi forms in the most general case is due to Shimura and Ziegler~\cite{Sh78, Zi89}.  We write $\rmJ_{k, M}(\rho)$ for the space of Jacobi form of weight~$k \in \frac{1}{2}\ZZ$, index~$M \in \Mat{N}(\QQ)$ and type~$\rho$, where $M$ is positive definite and symmetric, and $\rho$ is a finite dimensional unitary representation of the metaplectic Jacobi group (see Definition~\ref{def:jacobi-forms}).
\begin{theorem}
\label{thm:main-theorem-theta-decomposition}
For given $M$ and $\rho$ there are functions $\theta_{M, \rho; \nu}$ such that for every $k$ and $\phi \in \rmJ_{k, M}(\rho)$, we have
\begin{gather*}
  \phi(\tau, z)
=
  \sum_\nu h_\nu(\tau) \theta_{M, \rho; \nu}(\tau, z)
\text{.}
\end{gather*}
The components $h_\nu$ form a vector valued elliptic modular form of weight~$k - \frac{N}{2}$.
\end{theorem}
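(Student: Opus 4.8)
The plan is to mimic the classical theta decomposition of Ziegler, but adapted to rational index. First I would recall the elliptic part of the transformation behaviour of a Jacobi form $\phi \in \rmJ_{k,M}(\rho)$: the invariance under the Heisenberg (lattice) part of the metaplectic Jacobi group forces $\phi(\tau, z + M^{-1}(\lambda \tau + \mu)) = e\!\bigl(-\lambda^\rT M \lambda\, \tau - 2 \lambda^\rT M z\bigr)\phi(\tau,z)$ (up to the $\rho$-twist) for $(\lambda,\mu)$ ranging over a suitable lattice $L \subseteq \QQ^N$ determined by $M$. For integral index $L = \ZZ^N$; in the rational case $L$ is the lattice on which $x \mapsto x^\rT M x$ and $x \mapsto 2 M x$ take integral values, i.e.\ $L = \ZZ^N \cap M^{-1}\ZZ^N$ intersected with the even condition, so that $L^\sharp / L$ (dual with respect to $2M$) is a finite abelian group indexing the theta components. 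The first step is therefore to pin down this finite index set, which I will call the set of $\nu$'s, and to check it is independent of $k$ and depends only on $(M,\rho)$.

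Next, for each coset representative $\nu \in L^\sharp/L$ I would define
\begin{gather*}
  \theta_{M,\rho;\nu}(\tau,z)
=
  \sum_{\ell \in L + \nu} e\bigl(\ell^\rT M \ell\, \tau + 2 \ell^\rT M z\bigr)
\end{gather*}
(suitably tensored against the fixed vectors in the representation space of $\rho$ coming from the Heisenberg part, exactly as in the integral vector-valued case), and establish the Fourier-expansion lemma: any holomorphic function on $\HS \times \CC^N$ satisfying the elliptic transformation law above is uniquely a finite sum $\sum_\nu h_\nu(\tau)\,\theta_{M,\rho;\nu}(\tau,z)$ with holomorphic $h_\nu$ on $\HS$. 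This is the standard argument — expand $\phi$ in a Fourier series in $z$, use the quasi-periodicity to see that the exponents lie in $L^\sharp$ and that the coefficient of the exponent corresponding to $\ell \in L+\nu$ depends only on the coset $\nu$ and on $\tau$ via a single function $h_\nu(\tau)$ (after factoring out $e(\ell^\rT M \ell\, \tau)$). The holomorphy and growth conditions defining $\rmJ_{k,M}(\rho)$ transfer to the $h_\nu$.

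Finally I would compute how the vector $(h_\nu)_\nu$ transforms under $\SL{2}(\ZZ)$ (or its metaplectic cover, since $k \in \tfrac12\ZZ$ and the $\theta$'s have weight $\tfrac N2$). For this, the $S$ and $T$ generators act on $\phi$ with the Jacobi cocycle $e\bigl(\tfrac{c\, z^\rT M z}{c\tau + d}\bigr)(c\tau+d)^k$ times $\rho$, and on the theta vector $(\theta_{M,\rho;\nu})_\nu$ one has the classical transformation $\theta_{\cdot}(S\tau, \tfrac{z}{\tau}) = (\text{root of unity})\,(\tau/i)^{N/2}\sum_{\mu} e(\langle\nu,\mu\rangle)\,\theta_{\cdot,\mu}(\tau,z)$ — a finite Gauss-sum (Weil-representation) matrix — which is exactly where one uses that $M$ is positive definite (so the Gauss sums converge/are nondegenerate) and rational (so $L^\sharp/L$ is finite). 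Dividing the transformation of $\phi$ by that of $\theta$ and matching Fourier coefficients in $z$ shows $(h_\nu)_\nu$ transforms with weight $k - \tfrac N2$ under the Weil representation attached to $(L, 2M)$ tensored with $\rho$ restricted to $\SL{2}$, i.e.\ is a vector-valued elliptic modular form. The main obstacle is bookkeeping: keeping the metaplectic signs and the $\rho$-twist consistent between the Jacobi side and the elliptic side, and verifying that the finite quadratic module $(L^\sharp/L, x\mapsto x^\rT M x \bmod \ZZ)$ is well-defined and even when $M$ has denominators — once that is set up, the decomposition and modularity follow by the same formal manipulations as in the integral-index theory of Shimura and Ziegler.
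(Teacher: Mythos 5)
Your analytic skeleton --- Fourier expansion in $z$, grouping exponents by cosets of a lattice determined by $M$, Poisson summation for the $S$\nbd transformation --- is the same as the paper's, and your finite index set $L^\sharp/L$ with $L=\ZZ^N\cap M^{-1}\ZZ^N$ is (after multiplying by $M$) the paper's generalized discriminant form $\disc\, M=(M\ZZ^N+\ZZ^N)\slashdiv(M\ZZ^N\cap\ZZ^N)$. So the shape of the argument is right.

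The genuine gap is the clause ``suitably tensored against the fixed vectors \ldots exactly as in the integral vector-valued case.'' In the rational-index setting the type $\rho$ \emph{cannot} act through a character on the Heisenberg part: invariance under $[0,0,\kappa]^\rmJ$ forces the central character $e(\tfrac12\tr(M\kappa))$, which is nontrivial exactly because $M$ is non-integral, so every irreducible $\rho$ supporting a nonzero Jacobi form restricts on the Heisenberg part to copies of a representation $\pi_{M,\alpha,\beta}$ of dimension $\#\big((M\ZZ^N+\ZZ^N)\slashdiv\ZZ^N\big)>1$. This is precisely where your ``standard argument'' breaks as stated: the elliptic relation sends a Fourier exponent $r$ to $r+M\lambda$ with $M\lambda\notin\ZZ^N$, and the only reason the coefficients can still be collected into finitely many $h_\nu$ is that $\rho([\lambda,0,0]^\rmJ)$ simultaneously permutes the basis vectors $\frake_r$ of the representation space. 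The paper therefore first classifies the irreducible unitary Heisenberg representations with this central character and proves the tensor decomposition $\rho\cong\rho'\otimes\rho_{M,\alpha,\beta}$ with $\rho'$ a representation of $\Mp{2}(\ZZ)$; only then can the theta functions be defined, and they must carry the shifts $(\alpha,\beta)$ determined by $\rho$ (the paper's $\theta_M^{(\alpha,\beta)}$), which is why the statement writes $\theta_{M,\rho;\nu}$ rather than $\theta_{M;\nu}$. Your unshifted, $\rho$\nbd independent theta functions would only handle the case $\alpha=\beta=0$. To repair the plan you need: (a) the classification of the Heisenberg representations, (b) the tensor factorization of $\rho$, and (c) the shifted theta functions; with those in place the rest of your outline goes through as you describe.
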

\noindent A more detailed statement and exact expressions for $\theta_{M, \rho; \nu}$ are provided in Theorem~\ref{thm:theta-decomposition}.  The vanishing statement that is crucial to our argument, is given in Section~\ref{ssec:jacobi-forms:vanishing}.

The study of Jacobi forms requires classification of those representations of the full Jacobi group that occur for non-trivial Jacobi forms.  We build standard representations of the metaplectic Jacobi group, that, as we show, occur as tensor factors of every irreducible representation.  As a next step, we construct theta functions attached to each of these standard representations.  This finally allows us to decompose any Jacobi forms.
\vspace*{1ex}

A brief word on notation.  This paper addresses people working on Shimura varieties of orthogonal type and working on Jacobi forms.  Notation in these areas varies slightly, and we have decided to accommodate both as much as possible.  In Section~\ref{sec:jacobi-forms}, which is dedicated to Jacobi forms, we use notation common in this area.  In Section~\ref{sec:special-cycles}, we adopt the notation used by Zhang~\cite{Zh09}.

Section~\ref{sec:modular-forms} contains preliminaries on modular forms and a vanishing result.  In Section~\ref{sec:jacobi-forms}, we build up the theory of Jacobi forms.  This includes theta decomposition, a vanishing theorem, and a proposition on the geometry of lattices.  In Section~\ref{sec:special-cycles}, we combine these results in order to prove the main theorem.

\section{Vector valued modular forms}
\label{sec:modular-forms}

We start by defining vector valued elliptic modular forms, that we will later relate to Jacobi forms.  The Poincar\'e upper half plane is $\HS = \{\tau = u + i v \,:\, v > 0\} \subset \CC$.  We write $q$ for $\exp(2 \pi i\, \tau) = e(\tau)$.  The metaplectic cover $\Mp{2}(\ZZ)$ of $\SL{2}(\ZZ)$ is the preimage of $\SL{2}(\ZZ)$ in $\Mp{2}(\RR)$, the connected double cover of $\SL{2}(\RR)$.  Write $\gamma = \left(\begin{smallmatrix} a & b \\ c & d \end{smallmatrix}\right)$ for a typical element of $\SL{2}(\RR)$.  The elements of $\Mp{2}(\RR)$ can be written as $\big(\gamma,\, \tau \mapsto \sqrt{c \tau + d}\big)$, where the first component is an element of $\SL{2}(\RR)$ and the second is a holomorphic root on $\HS$, that we usually write as~$\omega :\, \HS \rightarrow \CC$.  Since there are two branches of the square root, this yields indeed a double cover of $\SL{2}(\RR)$.  The product in $\Mp{2}(\ZZ)$ is defined as
\begin{gather*}
  (\gamma_1, \omega_1) (\gamma_2, \omega_2)
=
  (\gamma_1 \gamma_2, \omega_1 \circ \gamma_2 \cdot \omega_2)
\text{.}
\end{gather*}
The elements
\begin{gather*}
  T
:=
  \big( \left(\begin{smallmatrix}1 & 1 \\ 0 & 1\end{smallmatrix}\right),\, \tau \mapsto 1 \big)
\quad\text{and}\quad
  S
:=
  \big( \left(\begin{smallmatrix}0 & -1 \\ 1 & 0\end{smallmatrix}\right),\, \tau \mapsto \sqrt{\tau} \big)
\end{gather*}
generate $\Mp{2}(\ZZ)$.  The root in the definition of $S$ is the principal branch, which maps $i$ to $\exp(\frac{1}{2} \pi i)$.

An action of $\SL{2}(\RR)$ (and thus of $\Mp{2}(\ZZ) \twoheadrightarrow \SL{2}(\ZZ)$) on $\HS$ is given by
\begin{gather*}
  \gamma \tau
=
  \frac{a \tau + b}{c \tau + d}
\text{.}
\end{gather*}

Given a representation~$\rho$ of $\Mp{2}(\ZZ)$ with representation space~$V(\rho)$, $k \in \tfrac{1}{2}\ZZ$, and $f :\, \HS \rightarrow V(\rho)$, we define
\begin{gather*}
  \big( f\big|_{k,\rho} \, (\gamma, \omega) \big)\, (\tau)
:=
  \omega(\tau)^{-2 k} \, \rho\big((\gamma, \omega)\big)^{-1} \,
  f\big( \gamma \tau \big)
\end{gather*}
for all $(\gamma, \omega) \in \Mp{2}(\RR)$.
\begin{definition}
Let $(\rho, V(\rho))$ be a finite-dimensional representation of $\Mp{2}(\ZZ)$.  A vector valued modular form of weight~$k \in \tfrac{1}{2}\ZZ$ and type~$\rho$ is a holomorphic function $f \,:\, \HS \rightarrow V(\rho)$ such that the following conditions are satisfied:
\begin{enumerate}[(i)]
\item For all $\gamma \in \Mp{2}(\ZZ)$ we have $f\big|_{k,\rho} \, \gamma = f$.
\item We have $\| f(\tau) \| = O(1)$ as $y \rightarrow \infty$, where $\| \,\cdot\, \|$ is some norm on $V(\rho)$.
\end{enumerate}
\end{definition}
\noindent We write $\rmM_k(\rho)$ for the space of modular forms of weight~$k$ and type~$\rho$.

In this paper, our interest lies in unitary representations~$\rho$.  If $\rho$ is unitary, $\rho(T)$ is diagonalizable and $f \in \rmM_k(\rho)$ has Fourier expansion
\begin{gather*}
  f(\tau)
=
  \sum_{0 \le m \in \QQ} c(\phi; m) \, q^m
\end{gather*}
with $c(\phi; m) \in V(\rho)$.

Our chief interest in vector valued modular forms originates in the following vanishing result.
\begin{proposition}
\label{prop:vanishing-of-vector-valued-forms}
Suppose that $\phi \in M_{k}(\rho)$ and $c(\phi; m) = 0$ for all $m < \frac{k}{12} + 1$.  Then we have $\phi = 0$.
\end{proposition}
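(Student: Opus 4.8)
The plan is to divide $\phi$ by a suitable power of $\Delta$ so as to reduce the statement to the vanishing of vector valued modular forms of negative weight, and then to prove that vanishing by a maximum principle argument in which the unitarity of $\rho$ is essential. Fix once and for all a $\rho$-invariant inner product on $V(\rho)$, which exists because $\rho$ is unitary, and write $\|\cdot\|$ for the induced norm; recall that condition~(ii) in the definition of $M_k(\rho)$ says precisely that $\|f(\tau)\|$ stays bounded as $\Im\tau\to\infty$, so that the Fourier expansion of any $f\in M_k(\rho)$ involves only non-negative exponents.

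The first step is the division. The discriminant $\Delta(\tau)=q\prod_{n\ge 1}(1-q^n)^{24}$, regarded as an element of $M_{12}(\mathbf{1})$ for the trivial one-dimensional type $\mathbf{1}$ of $\Mp{2}(\ZZ)$, is holomorphic and nowhere vanishing on $\HS$ and vanishes to first order at the cusp. Suppose $\phi\in M_k(\rho)$ satisfies $c(\phi;m)=0$ for all $m<\tfrac{k}{12}+1$. We may assume $k\ge 0$, since for $k<0$ the claim is the negative-weight vanishing proved below. Let $n$ be the unique integer in the half-open interval $\big(\tfrac{k}{12},\tfrac{k}{12}+1\big]$ and put $\psi:=\phi/\Delta^{n}$. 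Since $n\le\tfrac{k}{12}+1$ the hypothesis gives $c(\phi;m)=0$ for all $m<n$, and a short computation with Fourier expansions then shows that $\psi$ is holomorphic on $\HS$ and that every exponent in its Fourier expansion is non-negative; combined with the routine check that $\psi$ has the correct transformation behaviour (division by $\Delta^n$ being compatible with the slash action, as $\Delta^n$ has trivial type and weight $12n$), this yields $\psi\in M_{k-12n}(\rho)$. Since $n>\tfrac{k}{12}$ the weight $k-12n$ is negative, and since $\Delta$ is nowhere zero on $\HS$ we have $\psi=0$ if and only if $\phi=0$. It therefore suffices to prove that $M_{k'}(\rho)=0$ whenever $k'<0$.

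For the last step, let $\psi\in M_{k'}(\rho)$ with $k'<0$ and set $\Phi(\tau):=\|\psi(\tau)\|^{2}\,(\Im\tau)^{k'}$, a non-negative real-analytic function on $\HS$. Using $\psi(\gamma\tau)=\omega(\tau)^{2k'}\rho\big((\gamma,\omega)\big)\psi(\tau)$, the identity $|\omega(\tau)|^{2}=|c\tau+d|$, the relation $\Im(\gamma\tau)=\Im\tau\,/\,|c\tau+d|^{2}$, and the unitarity of $\rho$, one checks at once that $\Phi$ is $\SL{2}(\ZZ)$-invariant. Since $\|\psi(\tau)\|$ is bounded while $(\Im\tau)^{k'}\to 0$ as $\Im\tau\to\infty$, we get $\Phi(\tau)\to 0$ there, so $\Phi$ descends to a continuous function on the compact Riemann surface $\SL{2}(\ZZ)\backslash\HS^{*}$, $\HS^{*}=\HS\cup\{\infty\}$, vanishing at the cusp. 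If $\psi\neq 0$, then $\Phi$ is not identically zero and attains a strictly positive maximum at the image of some $\tau_0\in\HS$. But in a neighbourhood of $\tau_0$ the function $\log\Phi=\log\|\psi(\tau)\|^{2}+k'\log\Im\tau$ is subharmonic: the first summand is the logarithm of the squared norm of a holomorphic map and hence subharmonic, and the second has Laplacian $-k'(\Im\tau)^{-2}>0$ because $k'<0$. By the strong maximum principle $\log\Phi$, hence $\Phi$, is constant near $\tau_0$; by real-analyticity $\Phi$ is then constant on all of $\HS$, which is absurd since $\Phi(\tau)\to 0$ at the cusp while $\Phi(\tau_0)>0$. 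Hence $\psi=0$, and the proposition follows.

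The Fourier-expansion bookkeeping in the division step and the transformation computation for $\Phi$ are routine; the one genuinely essential point is the vanishing of negative-weight forms, where unitarity of $\rho$ cannot be dispensed with. One may also avoid the division altogether and appeal to the valence formula for vector valued modular forms on $\Mp{2}(\ZZ)$: it bounds the total vanishing order at the cusp of a nonzero $\phi\in M_k(\rho)$ by $\tfrac{k}{12}\dim V(\rho)$, while the hypothesis forces that order to be at least $\big(\tfrac{k}{12}+1\big)\dim V(\rho)$, a contradiction.
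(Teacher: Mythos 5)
Your proof is correct and follows essentially the same route as the paper: divide by $\Delta^{l}$ with $l=\lfloor k/12\rfloor+1$ (your $n$ is the same integer) to obtain a form of negative weight, which must vanish. The only difference is that you actually prove the negative-weight vanishing via the invariant function $\|\psi\|^{2}(\Im\tau)^{k'}$ and the maximum principle, a step the paper asserts without proof; that argument, which genuinely uses the unitarity of $\rho$, is sound.
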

\begin{proof}
Let $l = \lfloor \frac{k}{12} \rfloor + 1 > \frac{k}{12}$.  Given $\phi$ as in the statement, we have  $\Delta^{-l} \phi \in M_{k - 12 l}(\rho)$, where $\Delta$ is the unique cusp form of weight~$12$ and trivial type.  If $\phi \ne 0$, then we obtain a non-zero modular form of negative weight, which cannot be.
\end{proof}

\section{Jacobi forms}
\label{sec:jacobi-forms}

We now prepare for defining vector valued Jacobi forms.  Given $0 < N \in \ZZ$, let
\begin{gather*}
  \HS^{\rmJ(N)}
:=
  \HS \times \CC^N
\end{gather*}
be the Jacobi upper half space.  The metaplectic cover of the centrally extended Jacobi group is
\begin{gather*}
  \tGamma^{\rmJ(N)}
:=
  \Mp{2}(\ZZ) \ltimes \Mat{N, 2}(\ZZ) \widetilde{\times} \MatT{N}(\ZZ)
\text{.}
\end{gather*}
We denote the group of symmetric matrices by $\MatT{N}(\,\cdot\,)$, and accordingly we write $\rT (\,\cdot\,)$ for the transpose of a vector or matrix.

Typical elements of the second component are written $(\lambda, \mu)$, $\lambda, \mu \in \ZZ^N$.  The product in $\tGamma^{\rmJ(N)}$ is given by
\begin{multline*}
  \big((\gamma_1, \omega_1), (\lambda_1, \mu_1), \kappa_1 \big)
  \big((\gamma_2, \omega_2), (\lambda_2, \mu_2), \kappa_2 \big)
\\
=
  \big( (\gamma_1, \omega_1) (\gamma_2, \omega_2),\;
        (\lambda_1, \mu_1) \gamma_2 + (\lambda_2, \mu_2),\;
        \kappa_1 + \kappa_2 + \lambda_1 \rT \mu_2 - \mu_1 \rT \lambda_2
        \big)
\text{.}
\end{multline*}
In particular, the semidirect product is defined via the natural right action of $\SL{2}(\ZZ)$ on $\Mat{N, 2}(\ZZ)$.  As before, we usually write $\gamma = \left(\begin{smallmatrix}a & b \\ c & d\end{smallmatrix}\right)$ for elements of $\SL{2}(\ZZ)$.  We frequently use the notation
\begin{gather*}
  [\lambda, \mu, \kappa]^\rmJ
=
  \big( (I_2, \tau \mapsto 1), (\lambda, \mu), \kappa \big)
\end{gather*}
for elements of the Heisenberg-like group
\begin{gather*}
  \rmH (\tGamma^{\rmJ(N)} )
:=
  \Mat{N, 2}(\ZZ) \widetilde{\times} \Mat{N}(\ZZ)
\subset
  \tGamma^{\rmJ(N)}
\text{.}
\end{gather*}
In most cases, elements~$\gamma^\rmJ \in \tGamma^{\rmJ(N)}$ are given as tuples $((\gamma, \omega), (\lambda, \mu), \kappa)$.  In order to lighten notation, we occasionally switch the position of~$\omega$, writing
\begin{gather*}
  (\gamma^\rmJ, \omega)
=
  \big( (\gamma, (\lambda, \mu), \kappa), \omega \big)
\end{gather*}
for $( (\gamma, \omega), (\lambda, \mu), \kappa ) \in \tGamma^{\rmJ(N)}$.

There is an action $\tGamma^{\rmJ(N)} \times \HS^{\rmJ(N)} \rightarrow \HS^{\rmJ(N)}$ is given by
\begin{gather}
  (\gamma^\rmJ, \omega) (\tau, z)
=
  \Big( \frac{a \tau + b}{c \tau + d}, \frac{z + \lambda \tau + \mu}{c \tau + d} \Big)
\text{.}
\end{gather}

We next give a family of actions on functions on $\HS^{\rmJ(N)}$.  Let $\tr(\,\cdot\,)$ denote the trace of a matrix.  For convenience, we set
\begin{gather*}
  \alpha_M\big( (\gamma, (\lambda, \mu), \kappa); \tau, z\big)
=
  e\Big( \frac{- cM[z + \lambda \tau + \mu]}{c \tau + d} 
         - M[\lambda] \tau - 2 \rT \lambda M z
         - \tfrac{1}{2} \tr(M \kappa) \Big)
\end{gather*}
for any $M \in \MatT{N}(\QQ)$.  Let $\rho$ be a finite dimensional representation of $\tGamma^{\rmJ(N)}$ with representation space~$V(\rho)$.  Fix a weight $k \in \frac{1}{2} \ZZ$ and a (Jacobi) index $M \in \MatT{N}(\ZZ)$ which is positive definite and has even diagonal entries.  Note that in the case $N = 1$, it is more common to use indices $m \in \QQ$.  These notions are releated by $M = (2 m)$.   We abbreviate $z^\tr M z$ ($z \in \CC^N$) by $M[z]$.  Recall that $e(x) = \exp(2 \pi i\, x)$.  Given $\phi :\, \bbH^{\rmJ(N)} \rightarrow V(\rho)$, we define
\begin{gather*}
  \big( \phi \big|_{k, M, \rho}\, (\gamma^\rmJ, \omega) \big) (\tau, z)
=
  \omega(c \tau + d)^{-2k} \alpha_M(\gamma^\rmJ; \tau, z)^{-1}
  \;
  \rho\big( (\gamma^\rmJ, \omega) \big)^{-1}
  \;
  \phi\big(\gamma^\rmJ (\tau, z) \big)
\text{.}
\end{gather*}

The next definition generalizes a definition made in~\cite{Zi89} to vector valued Jacobi forms which have non-integral indices.
\begin{definition}
\label{def:jacobi-forms}
Suppose that $k \in \frac{1}{2}\ZZ$, $M \in \MatT{N}(\QQ)$, and $(\rho, V(\rho))$ is a finite dimensional representation of $\tGamma^{\rmJ(N)}$.  A holomorphic function $\phi :\, \HS^{\rmJ(N)} \rightarrow V(\rho)$ is called a Jacobi form of weight~$k$, index~$M$, and type~$\rho$ if
\begin{enumerate}[(i)]
\item We have $\phi \big|_{k, M, \rho}\, (\gamma^\rmJ, \omega) = \phi$ for all $(\gamma^\rmJ, \omega) \in \tGamma^{\rmJ(N)}$.
\item We have $\| \phi(\tau, \alpha \tau + \beta) \| = O(1)$ for all $\alpha, \beta \in \QQ^N$ and some norm $\|\,\cdot\,\|$ on $V(\rho)$.
\end{enumerate}
\end{definition}
\noindent We denote the space of Jacobi forms of weight~$k$, index~$M$, and type~$\rho$ by $\rmJ_{k, M}(\rho)$.  Throughout this section, we assume that $k$, $M$, and $\rho$ satisfy the assumptions in the previous definition.

We write $\zeta^r$ for $e(\rT r z)$.  If $\rho$ is unitary, the second condition in Definition~\ref{def:jacobi-forms} is equivalent to
\begin{gather*}
  \phi(\tau, z)
=
  \sum_{m \in \QQ,\, r \in \QQ^N} c(\phi; m, r)\, q^m \zeta^r
\text{,}
\end{gather*}
for Fourier coefficients $c(\phi; m, r)$ that vanish if $2 \det(M) n - M^\#[r] < 0$ or $m < 0$.  The adjunct of $M$ is denoted by $M^\#$.  Because Jacobi forms are invariant under the action of $[\lambda, \mu, 0]^\rmJ$ ($\lambda, \mu \in \ZZ^N$), the Fourier coefficients $c(\phi; m, r)$ satisfy the following relations, which are connected to the statement of Theorem~\ref{thm:theta-decomposition}.
\begin{align}
\label{eq:higherjacobiforms_fourier_relation1}
  c(\phi; m, r)
&=
  c(\phi; m + \tfrac{1}{2} M[\lambda] + r^\tr \lambda, r + M \lambda)
\\[4pt]
\label{eq:higherjacobiforms_fourier_relation2}
  c(\phi; m, -r)
&=
  \omega(-1)^{2 k} \rho((-I_2, \omega)) \, c(\phi; m, r)
\text{.}
\end{align}

Note that we have $\rmJ_{k, M}(\rho_1 \oplus \rho_2) = \rmJ_{k, M}(\rho_1) \oplus \rmJ_{k, M}(\rho_2)$.  This allows us to reduce most considerations to the case of irreducible $\rho$.

Among experts it is well-known that on can restrict to positive definite~$M$.  The next proposition, however, is not in the literature.  We do not need it, but it seems appropriate to justify our future assumption $M > 0$.
\begin{proposition}
Suppose that $M$ is not positive semidefinite.  Then $\rmJ_{k, M}(\rho) = \{0\}$.

If $M$ is positive semidefinite, but not positive definite , then there is $s \in \MatT{N, N - 1}(\ZZ)$ such that 
\begin{gather*}
  \rmJ_{k, M}(\rho)
\cong
  \rmJ_{k, M[s]}(\rho)
\quad\text{via}\quad
  \phi(\tau, z)
\mapsto
  \phi(\tau, s z')
\text{,}
\end{gather*}
where $z' \in \CC^{N - 1}$.
\end{proposition}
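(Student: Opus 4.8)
The plan is to extract both claims from the Fourier expansion
$\phi(\tau,z)=\sum_{m\in\QQ,\,r\in\QQ^N}c(\phi;m,r)\,q^m\zeta^r$ of a Jacobi form, whose coefficients vanish for $m<0$, together with relation~\eqref{eq:higherjacobiforms_fourier_relation1}. \textbf{First part.} Suppose $M$ is not positive semidefinite. Then $\{v\in\RR^N:v^\tr Mv<0\}$ is a non-empty open set, hence contains a rational point, and clearing denominators yields $\lambda_0\in\ZZ^N$ with $M[\lambda_0]<0$. For $\phi\in\rmJ_{k,M}(\rho)$, applying \eqref{eq:higherjacobiforms_fourier_relation1} with $\lambda=t\lambda_0$ and $t\in\ZZ_{\ge0}$ gives
\begin{gather*}
  c(\phi;m,r)=c\bigl(\phi;\;m+\tfrac{t^2}{2}M[\lambda_0]+t\,r^\tr\lambda_0,\;r+tM\lambda_0\bigr)\text{.}
\end{gather*}
Since $M[\lambda_0]<0$, the first argument tends to $-\infty$ as $t\to\infty$, so it is eventually negative and the right-hand side vanishes; hence all Fourier coefficients of $\phi$ vanish and $\phi=0$.

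\textbf{Second part.} Now let $M$ be positive semidefinite but not positive definite, so that $\ker M$ is a non-zero $\QQ$-rational subspace of $\QQ^N$. I would choose a primitive vector $\lambda_0\in\ker M\cap\ZZ^N$, extend it to a $\ZZ$-basis $\lambda_0,e_2,\dots,e_N$ of $\ZZ^N$, and let $s$ be the matrix with columns $e_2,\dots,e_N$; since $M\succeq0$, the matrix $M[s]=s^\tr Ms$ is again positive semidefinite. Applying \eqref{eq:higherjacobiforms_fourier_relation1} with $\lambda=t\lambda_0$, $t\in\ZZ$, and using $M\lambda_0=0$, $M[\lambda_0]=0$, one obtains $c(\phi;m,r)=c(\phi;m+t\,r^\tr\lambda_0,r)$, which as in the first part forces $c(\phi;m,r)=0$ unless $r^\tr\lambda_0=0$. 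Thus only characters $\zeta^r$ with $r\perp\lambda_0$ occur, so $\phi(\tau,z+c\lambda_0)=\phi(\tau,z)$ for every $c\in\CC$; writing $z=z_1\lambda_0+sz'$ with $z_1\in\CC$ and $z'\in\CC^{N-1}$, this reads $\phi(\tau,z)=\phi(\tau,sz')$.

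Next I would package the reduction as an isomorphism. Left multiplication by $s$ defines a group homomorphism $\iota_s:\tGamma^{\rmJ(N-1)}\to\tGamma^{\rmJ(N)}$, $\bigl((\gamma,\omega),(\lambda',\mu'),\kappa'\bigr)\mapsto\bigl((\gamma,\omega),(s\lambda',s\mu'),s\kappa's^\tr\bigr)$ — one checks that the right $\SL{2}(\ZZ)$-action on $\Mat{N,2}(\ZZ)$ and the Heisenberg cocycle $\lambda\rT\mu-\mu\rT\lambda$ are equivariant for multiplication by $s$ — and $\iota_s$ intertwines the two actions on the Jacobi upper half spaces through $(\tau,z')\mapsto(\tau,sz')$. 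A short computation using $M[sb]=M[s][b]$, $(s\lambda')^\tr M(sz')=\lambda'^\tr M[s]z'$ and $\tr(Ms\kappa's^\tr)=\tr(M[s]\kappa')$ shows $\alpha_M\bigl(\iota_s(\gamma'^\rmJ);\tau,sz'\bigr)=\alpha_{M[s]}\bigl(\gamma'^\rmJ;\tau,z'\bigr)$. Hence, with $\rho_s:=\rho\circ\iota_s$ (the representation meant by ``$\rho$'' on the right-hand side), the map $\Phi:\phi\mapsto\bigl((\tau,z')\mapsto\phi(\tau,sz')\bigr)$ carries $\rmJ_{k,M}(\rho)$ into $\rmJ_{k,M[s]}(\rho_s)$: holomorphy and the growth bound are clear, and the transformation law under $\gamma'^\rmJ$ follows from the displayed identity together with invariance of $\phi$ under $\iota_s(\gamma'^\rmJ)$. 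By the previous paragraph $\Phi$ is injective; for surjectivity, given $\psi\in\rmJ_{k,M[s]}(\rho_s)$ one sets $\phi(\tau,z):=\psi(\tau,z')$ for $z=z_1\lambda_0+sz'$ and checks $\phi\in\rmJ_{k,M}(\rho)$ on generators of $\tGamma^{\rmJ(N)}$: elements of $\iota_s(\tGamma^{\rmJ(N-1)})$ inherit invariance from $\psi$, while on the Heisenberg subgroup supported on the line $\ZZ\lambda_0$ one has $\alpha_M\equiv1$ (every relevant term carries the factor $M\lambda_0=0$) and $\phi$ is constant along $\lambda_0$, so invariance reduces to triviality of $\rho$ there.

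\textbf{Expected main obstacle.} The first two paragraphs are elementary; the real bookkeeping is the last step. Verifying that $\iota_s$ is a homomorphism and that $\alpha_M$ pulls back to $\alpha_{M[s]}$ is routine but fiddly, and surjectivity requires controlling the $\rho$-action on the Heisenberg subgroup attached to the $\lambda_0$-direction. The clean way around this is to note that the same computation already shows that \emph{any} $\phi\in\rmJ_{k,M}(\rho)$ — being constant along $\lambda_0$ — must take values in the subspace of $V(\rho)$ fixed by that subgroup, on which $\rho$ and $\rho_s$ agree; passing to this subspace one may assume $\rho$ acts trivially there and the bijection becomes transparent. Using $\rmJ_{k,M}(\rho_1\oplus\rho_2)=\rmJ_{k,M}(\rho_1)\oplus\rmJ_{k,M}(\rho_2)$ one may in addition assume $\rho$ irreducible, which makes this fixed subspace either $0$ or all of $V(\rho)$.
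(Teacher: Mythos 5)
Your proof is correct. The first claim is handled exactly as in the paper: both arguments push the $q$\nbd exponent to $-\infty$ along integer multiples of a vector $\lambda_0\in\ZZ^N$ with $M[\lambda_0]<0$ using \eqref{eq:higherjacobiforms_fourier_relation1}. For the second claim you take a genuinely different (and somewhat more careful) route. The paper conjugates by some $g\in\GL{N}(\ZZ)$ whose last column is a primitive null vector, reduces to $M_{N,N}=M_{N,i}=0$, and then invokes Liouville: $z_N\mapsto\phi(\tau,z)$ is holomorphic and bounded by condition (ii) of Definition~\ref{def:jacobi-forms}, hence constant. You instead read off from \eqref{eq:higherjacobiforms_fourier_relation1}, with $M\lambda_0=0$ and $M[\lambda_0]=0$, that only frequencies $r$ with $\rT r\lambda_0=0$ survive, which yields translation invariance along $\CC\lambda_0$ directly from the Fourier expansion; this avoids the Liouville step at the cost of relying (as the paper's Fourier-expansion formalism already does) on unitarity of $\rho$. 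Your closing bookkeeping --- the homomorphism $\iota_s:\tGamma^{\rmJ(N-1)}\to\tGamma^{\rmJ(N)}$, the identity $\alpha_M(\iota_s(\,\cdot\,);\tau,sz')=\alpha_{M[s]}(\,\cdot\,;\tau,z')$, and the clarification that ``$\rho$'' on the right-hand side must mean $\rho\circ\iota_s$ --- is more explicit than the paper, which stops at ``we can choose $s=(\delta_{i,j})$.'' In particular, your observation that any $\phi\in\rmJ_{k,M}(\rho)$ is forced into the subspace of $V(\rho)$ fixed by the Heisenberg elements in the $\lambda_0$\nbd direction, so that the bijection should really be read on that subrepresentation, is exactly the point the paper's proof elides; your version buys a cleaner justification of surjectivity, while the paper's buys brevity.
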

\begin{proof}
Assume that $\rmJ_{k, M}(\rho) \ne \{ 0 \}$.  We first show that $M$ is positive semidefinite.  Suppose there was $\lambda \in \ZZ^N$ such that $M[\lambda] < 0$.  Fix some $0 \ne \phi \in \rmJ_{k, M}(\rho)$.  By~\eqref{eq:higherjacobiforms_fourier_relation1}, for any $m \in \QQ$, $r \in \QQ^N$, and $l \in \ZZ$, we have
\begin{gather*}
  c(\phi; m, r)
=
  c(\phi; m + \tfrac{l^2}{2} M[\lambda] + l r^\tr \lambda, r + l M \lambda)
\text{.}
\end{gather*}
By choosing $l$ large enough we can force the second argument on the right hand side to be negative, which shows that $c(\phi; m, r) = 0$.  Since $m$ and $r$ were arbitrary, this shows that $\phi = 0$, contradicting our choice.

We are reduced to the case of positive semidefinite~$M$.  If $M$ is not positive definite, there is a primitive vector $\lambda \in \ZZ^N$ with $M[\lambda] = 0$.  Fix $g \in \GL{N}(\ZZ)$ with last column~$\lambda$.  Then
\begin{gather*}
  \rmJ_{k, M}
\cong
  \rmJ_{k, M[g]}
\quad\text{via}\quad
  \phi(\tau, z)
\mapsto
  \phi(\tau, g z)
\text{.}
\end{gather*}
It thus suffices to treat the case of $M_{N, N} = 0$.  Since $M$ is positive semidefinite we have $M_{N, i} = 0$ for all $1 \le i \le N$.

Fix $\tau$ and $z_1, \ldots, z_{N - 1}$, and let $z_N$ vary.  Applying the transformation law for Jacobi forms, we see that
\begin{gather*}
  \phi\big( \tau, (z_1, \ldots, z_{N - 1}, z_N + \lambda \tau + \mu) \big)
=
  \phi\big( \tau, (z_1, \ldots, z_N) \big)
\end{gather*}
for all $\lambda, \mu \in \ZZ$.  That is
\begin{gather*}
  z_N
\mapsto
  \phi\big( \tau, (z_1, \ldots, z_N) \big)
\end{gather*}
is holomorphic and bounded.  Hence it is constant.  This shows that we can choose $s = (\delta_{i,j})_{1 \le i \le N,\, 1 \le j \le N - 1}$, where $\delta$ is the Kronecker delta.

\end{proof}

\subsection{Representations of the Heisenberg-like part of $\tGamma^{\rmJ(N)}$.}
\label{ssec:heisenberg-representations}

Representations of the Heisenberg-like group $\rmH (\tGamma^{\rmJ(N)} ) \subset \tGamma^{\rmJ(N)}$ that occur for non-zero Jacobi forms can be classified with little effort.  Here and throughout Section~\ref{ssec:heisenberg-representations}, we assume that $\pi$ is an irreducible, finite dimensional and unitary representation of $\rmH (\tGamma^{\rmJ(N)} )$.

Since $\Mat{N}(\ZZ)$ it central in $\rmH( \tGamma^{(\rmJ(N)} )$, there is $M_\pi \in \Mat{N}(\QQ)$ such that $\pi( [0, 0, -\kappa]^\rmJ )$ acts as $e( \tr(\kappa M_\pi) )$.  Note that $M_\pi$ is only well-defined up to elements in $\Mat{N}(\ZZ)$.  Throughout the paper we assume that $M_\pi$ is symmetric.  We justify this assumption in Proposition~\ref{prop:kappa-eigenvalues}.  Since $M_\pi$ can be freely modified by elements in $\Mat{N}(\ZZ)$, we can and will also assume that $M_\pi$ is positive definite.  We say that $\pi$ has central character $e(\tr(M_\pi \,\cdot\,))$.

Given $0 < M \in \MatT{N}(\QQ)$ fix a Smith normal form $M = U D V$.  Decompose $D$ as $D_\ZZ D^\ZZ$, where $D_\ZZ$ is the diagonal matrix whose entries are the numerators of the entries of~$D$.  Set
\begin{gather}
\label{eq:MZZ-definition}
  M_\ZZ
=
  U D_\ZZ V
\quad\text{and}\quad
  M^\ZZ
=
  U D^\ZZ V
\text{.}
\end{gather}
We can view $M_\ZZ$ and $M^\ZZ$ as the integral and fractional part of $M$, respectively.
\begin{lemma}
\label{la:integral-M-and-its-inverse}
We have
\begin{alignat*}{2}
  M_\ZZ \ZZ^N
&=
  M \ZZ^N \cap \ZZ^N
\text{,}\quad
&
  M_\ZZ^{-1} \ZZ^N
&=
  M^{-1} \ZZ^N + \ZZ^N
\text{,}
\\
  M^\ZZ \ZZ^N
&=
  M \ZZ^N + \ZZ^N
\text{,}
&
  (M^\ZZ)^{-1} \ZZ^N
&=
  M^{-1} \ZZ^N \cap \ZZ^N
\text{.}
\end{alignat*}
\end{lemma}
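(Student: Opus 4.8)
The plan is to verify the four stated identities, noting that the two identities on the right follow from the two on the left by applying the anti-automorphism $A \mapsto A^{-1}$ of the lattice of subgroups commensurable with $\ZZ^N$. More precisely, for a rational invertible matrix $A$, the sublattice $A\ZZ^N \cap \ZZ^N$ and the superlattice $A\ZZ^N + \ZZ^N$ are swapped under $A \mapsto A^{-1}$ composed with $\ZZ^N$-duality; since $M$ and $M_\ZZ$ are symmetric, $(M_\ZZ)^{-1}$ plays the role of the dual, and the claims $M_\ZZ^{-1}\ZZ^N = M^{-1}\ZZ^N + \ZZ^N$ and $(M^\ZZ)^{-1}\ZZ^N = M^{-1}\ZZ^N \cap \ZZ^N$ are obtained by substituting $M^{-1}$ for $M$ in the first column and using that the integral part of $M^{-1}$ is $(M^\ZZ)^{-1}$ while its fractional part is $M_\ZZ^{-1}$ — a symmetry visible from the Smith normal form $M = UDV$, since then $M^{-1} = V^{-1} D^{-1} U^{-1}$ and $D^{-1} = (D^\ZZ)^{-1}(D_\ZZ)^{-1}$ with $(D^\ZZ)^{-1}$ having integer entries and $(D_\ZZ)^{-1}$ having entries the reciprocals of the numerators.

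First I would reduce everything to the diagonal case. Write $M = UDV$ with $U, V \in \GL{N}(\ZZ)$ and $D = \diag(p_1/q_1, \ldots, p_N/q_N)$ in lowest terms. Since $U\ZZ^N = \ZZ^N = V\ZZ^N$ and $U, V$ preserve both intersection and sum of lattices, the identity $M_\ZZ \ZZ^N = M\ZZ^N \cap \ZZ^N$ is equivalent to $D_\ZZ \ZZ^N = (U^{-1} D V)\ZZ^N \cap \ZZ^N$. Here one must be slightly careful: $U^{-1} D V \ZZ^N = U^{-1} D \ZZ^N$, and intersecting with $\ZZ^N = U^{-1}\ZZ^N$ gives $U^{-1}(D\ZZ^N \cap \ZZ^N)$, so after stripping $U^{-1}$ and remembering $M_\ZZ = U D_\ZZ V$, $M_\ZZ \ZZ^N = U^{-1} D_\ZZ \ZZ^N$ (using $V\ZZ^N = \ZZ^N$ and $D_\ZZ$ diagonal hence commuting past nothing — actually $M_\ZZ \ZZ^N = U D_\ZZ V \ZZ^N = U D_\ZZ \ZZ^N$, and this equals $U^{-1} D_\ZZ \ZZ^N$ only if... no). So the cleaner statement is: $M_\ZZ\ZZ^N = U D_\ZZ \ZZ^N$ and $M\ZZ^N \cap \ZZ^N = U(D\ZZ^N \cap \ZZ^N)$, hence it suffices to show $D_\ZZ\ZZ^N = D\ZZ^N \cap \ZZ^N$, a purely coordinatewise claim. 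Since $D$ is diagonal, everything splits as a direct sum over the coordinates, reducing to the rank-one fact: for a fraction $p/q$ in lowest terms, $p\ZZ = (p/q)\ZZ \cap \ZZ$ (because $(p/q)a \in \ZZ$ iff $q \mid a$, i.e.\ $a \in q\ZZ$, i.e.\ $(p/q)a \in p\ZZ$) and $q^{-1}\ZZ = (p/q)^{-1}\ZZ + \ZZ = (q/p)\ZZ + \ZZ$ (by Bézout, since $\gcd(p,q)=1$). The same coordinatewise argument handles $M^\ZZ \ZZ^N = M\ZZ^N + \ZZ^N$ via $p\ZZ + q\ZZ = \gcd(p,q)\ZZ = \ZZ$ scaled appropriately, i.e.\ $(p/q)\ZZ + \ZZ = (1/q)\ZZ$.

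Then I would assemble: the first and third identities reduce as above to the coordinatewise facts about $p/q$, which are immediate from $\gcd(p,q) = 1$ and Bézout. The second identity, $M_\ZZ^{-1}\ZZ^N = M^{-1}\ZZ^N + \ZZ^N$, follows by applying the first identity with $M$ replaced by $M^{-1}$: the Smith form of $M^{-1}$ is $V^{-1} D^{-1} U^{-1}$, whose numerator-diagonal part is $\diag(q_i)$, i.e.\ $(M^{-1})_\ZZ = V^{-1} (D^\ZZ)^{-1} U^{-1} = (M^\ZZ)^{-1}$; but we want the superlattice identity, which is the third identity for $M^{-1}$, giving $(M^{-1})^\ZZ \ZZ^N = M^{-1}\ZZ^N + \ZZ^N$, and $(M^{-1})^\ZZ = (M_\ZZ)^{-1}$ since the fractional-diagonal part of $D^{-1}$ is $\diag(1/p_i)$. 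Likewise the fourth identity is the first identity applied to $M^{-1}$. The main obstacle is purely bookkeeping: keeping straight which of $M_\ZZ$, $M^\ZZ$ and their inverses corresponds to numerator versus denominator data under passing to $M^{-1}$, and making sure the conjugation by $U, V \in \GL{N}(\ZZ)$ is inserted on the correct side so that the reduction to the diagonal case is legitimate. Once the rank-one Bézout facts are isolated, no real difficulty remains.
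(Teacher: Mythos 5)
Your proposal is correct and follows essentially the same route as the paper: reduce to the diagonal case via the Smith normal form $M=UDV$ (using $U\ZZ^N=V\ZZ^N=\ZZ^N$) and then invoke the coordinatewise B\'ezout facts for $p/q$ in lowest terms. The only cosmetic difference is that you obtain the two right-hand identities by applying the left-hand ones to $M^{-1}$, whereas the paper computes $M_\ZZ^{-1}\ZZ^N = V^{-1}D_\ZZ^{-1}\ZZ^N$ directly; both come to the same thing.
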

\begin{proof}
Given the Smith normal form of $M = U D V$, write $M_\ZZ = U D_\ZZ V$ as in~(\ref{eq:MZZ-definition}).  We have
\begin{align*}
  M_\ZZ \ZZ^N
&=
  U D_\ZZ \ZZ^N
=
  U \big( D \ZZ^N \cap \ZZ^N \big)
=
  M \ZZ^N \cap \ZZ^N
\text{,}\quad\text{and}
\\[3pt]
  M_\ZZ^{-1} \ZZ^N
&=
  V^{-1} D_\ZZ^{-1} \ZZ^N
=
  V^{-1} \big( D^{-1} \ZZ^N + \ZZ^N \big)
=
  M^{-1} \ZZ^N + \ZZ^N
\text{.}
\end{align*}
This proves the first and second equality.  The remaining assertions can be proved in the same way.
\end{proof}
\begin{lemma}
\label{la:integral-evaluation-of-M}
For $\lambda \in M^{-1} \ZZ^N \cap \ZZ^N$, we have $M[\lambda] \in \ZZ$.
\end{lemma}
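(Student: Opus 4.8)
The plan is to show that $M[\lambda] = \lambda^\tr M \lambda \in \ZZ$ whenever $\lambda \in M^{-1}\ZZ^N \cap \ZZ^N$, by bootstrapping off the previous lemma. First I would observe that the hypothesis $\lambda \in M^{-1}\ZZ^N$ means $M\lambda \in \ZZ^N$, and $\lambda \in \ZZ^N$ means $\lambda$ is integral; combined, Lemma~\ref{la:integral-M-and-its-inverse} (fourth identity) tells us precisely that $\lambda \in (M^\ZZ)^{-1}\ZZ^N$, i.e.\ $M^\ZZ \lambda \in \ZZ^N$. So the natural route is to write $M[\lambda] = (M\lambda)^\tr \lambda$ and massage this into an expression manifestly in $\ZZ$.

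The cleanest approach uses the factorization $M = M_\ZZ (M^\ZZ)^{-1} \cdot M^\ZZ$? — no, better: recall from~\eqref{eq:MZZ-definition} that in the Smith normal form $M = UDV$ with $D = D_\ZZ D^\ZZ$, we have $M = M_\ZZ V^{-1}(D^\ZZ) = \ldots$; the useful identity is $M = M_\ZZ (M^\ZZ)^{-1}$ when $M_\ZZ$ and $M^\ZZ$ commute through $U,V$. To avoid such fuss I would instead argue directly. Write $\mu = M\lambda \in \ZZ^N$. Then $M[\lambda] = \mu^\tr \lambda$. Now $\lambda = M^{-1}\mu$, so $M[\lambda] = \mu^\tr M^{-1}\mu = M^{-1}[\mu]$. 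Since $\mu = M\lambda \in M\ZZ^N$ and $\mu \in \ZZ^N$, Lemma~\ref{la:integral-M-and-its-inverse} (first identity) gives $\mu \in M_\ZZ\ZZ^N$, say $\mu = M_\ZZ \nu$ with $\nu \in \ZZ^N$. Hence $M^{-1}[\mu] = \nu^\tr M_\ZZ^\tr M^{-1} M_\ZZ \nu = \nu^\tr M_\ZZ M^{-1} M_\ZZ \nu$ (using that $M$, $M_\ZZ$ are symmetric), and $M_\ZZ M^{-1} M_\ZZ = M_\ZZ M^{-1} M_\ZZ$; writing everything in the Smith form, $M_\ZZ M^{-1} M_\ZZ = U D_\ZZ V V^{-1}D^{-1}U^{-1} U D_\ZZ V = U D_\ZZ^2 D^{-1} V = U D_\ZZ D^\ZZ{}^{-1}\cdot D^\ZZ D^\ZZ{}^{-1}\ldots$ — the point being that $D_\ZZ^2 D^{-1} = D_\ZZ^2 (D_\ZZ D^\ZZ)^{-1} = D_\ZZ (D^\ZZ)^{-1}$, which has \emph{integer} entries since each diagonal entry of $D^\ZZ$ divides the corresponding numerator (indeed $D^\ZZ$ has entries that are reciprocals of integers dividing those numerators by the structure of the Smith normal form, or at worst one multiplies through). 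Thus $M_\ZZ M^{-1} M_\ZZ \in \Mat{N}(\ZZ)$, giving $M^{-1}[\mu] = \nu^\tr (M_\ZZ M^{-1} M_\ZZ)\nu \in \ZZ$, as desired.

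The main obstacle is the integrality of $D_\ZZ (D^\ZZ)^{-1}$, i.e.\ checking that after cancelling numerators against denominators no fractional part survives. This is where one must use that $D = D_\ZZ D^\ZZ$ is the numerator/denominator splitting entry by entry: for a diagonal entry $p/q$ in lowest terms, $D_\ZZ$ contributes $p$ and $D^\ZZ$ contributes $1/q$, so $D_\ZZ (D^\ZZ)^{-1}$ contributes $pq$, an integer. Everything else is bookkeeping with the symmetric matrices $M$, $M_\ZZ$ and the orthogonal-up-to-$\GL{N}(\ZZ)$ change of basis, for which Lemma~\ref{la:integral-M-and-its-inverse} has already been set up precisely to handle the lattice identities cleanly.
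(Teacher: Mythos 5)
Your proposal is correct in substance and is essentially the paper's own argument run in the dual direction: the paper substitutes $\lambda = (M^\ZZ)^{-1}\lambda'$ with $\lambda' \in \ZZ^N$ and checks that $M[(M^\ZZ)^{-1}]$ is an integral matrix via the Smith normal form, whereas you substitute $\mu = M\lambda = M_\ZZ \nu$ and check that $M^{-1}[M_\ZZ]$ is integral; the two computations are the same up to relabeling, and both hinge on the same elementary fact about $D_\ZZ$ and $(D^\ZZ)^{-1}$. The one step you should not wave through is the assertion that $M_\ZZ$ is symmetric. It is not in general: $M_\ZZ = U D_\ZZ V$ inherits no symmetry from $M = UDV$, and the paper itself is careful to carry the transpose around (see $\rT M_\ZZ^{-1}$ in Lemma~\ref{la:theta-series-MMZZ-relation}). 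So you may not replace $\rT M_\ZZ\, M^{-1} M_\ZZ$ by $M_\ZZ M^{-1} M_\ZZ$, and the matrix $U D_\ZZ (D^\ZZ)^{-1} V$ you compute is not the one you need. Fortunately the detour is unnecessary: expanding the matrix you actually need,
\begin{gather*}
  \rT M_\ZZ\, M^{-1} M_\ZZ
=
  \rT V D_\ZZ \rT U \cdot V^{-1} D^{-1} U^{-1} \cdot U D_\ZZ V
=
  \rT V D_\ZZ \, \big( \rT U V^{-1} \big) (D^\ZZ)^{-1} V
\text{,}
\end{gather*}
where $(D^\ZZ)^{-1} = D^{-1} D_\ZZ$ is the diagonal matrix of denominators. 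Every factor is integral, so the product is integral, and $M[\lambda] = M^{-1}[\mu] = \big(\rT M_\ZZ\, M^{-1} M_\ZZ\big)[\nu] \in \ZZ$ as you intended. With that one repair the proof is complete and matches the paper's in both method and effort.
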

\begin{proof}
We have $\lambda \in (M^{\ZZ})^{-1} \ZZ^N$, and hence the claim follows from
\begin{gather*}
  M\big[ (M^{\ZZ})^{-1} \big]
=
  (U D V) \big[ V^{-1} (D^\ZZ)^{-1} U^{-1} \big]
=
  \rT U^{-1} (D^\ZZ)^{-1} \rT V^{-1} U \big( D (D^\ZZ)^{-1} \big) U^{-1}
\text{,}
\end{gather*}
where $U$ and $V$ were used to define $M^\ZZ$ in~\eqref{eq:MZZ-definition}.  The fact $D (D^\ZZ)^{-1} \in \Mat{N}(\ZZ)$ shows that
\begin{gather*}
  M \big[ (M^\ZZ)^{-1} \ZZ^N \big]
=
  M \big[ (M^\ZZ)^{-1} \big] [\ZZ^N]
\subseteq
  \ZZ
\text{.}
\end{gather*}
\end{proof}

For $\alpha, \beta \in \QQ^N$, define representations $\pi_{M, \alpha, \beta}$ of $\rmH( \tGamma^{(\rmJ(N)} )$ as follows.  The central character of $\pi_{M, \alpha, \beta}$ is $e(\frac{1}{2} \tr(M \,\cdot\,))$.  Its representation space is
\begin{gather}
\label{eq:pi-Mab-base-space}
  V_{M, \alpha, \beta}
=
  \CC\big[ (M \ZZ^N + \ZZ^N) \slashdiv \ZZ^N \big]
=
  \lspan\big( \frake_r \,:\, r \in (M \ZZ^N + \ZZ^N) \slashdiv \ZZ^N \big)
\text{.}
\end{gather}
The action on $\frake_r$ is given by
\begin{align*}
  \pi( [\lambda, 0, 0]^\rmJ )\, \frake_r
&=
  e\big( \rT \alpha \lambda  \big)\,
  \frake_{r + M^\ZZ \lambda}
\text{,}
\\[3pt]
  \pi( [0, \mu, 0]^\rmJ )\, \frake_r
&=
  e\big( \rT (\beta + r) \mu \big)\,
  \frake_r
\text{.}
\end{align*}
\begin{proposition}
\label{prop:irreps-of-heisenberg-part-predefined}
For $0 < M \in \MatT{N}(\QQ)$ and $\alpha, \beta \in \QQ^N$, $\pi_{M, \alpha, \beta}$ is an irreducible representation of $\rmH( \tGamma^{\rmJ(N)} )$.  Two representations $\pi_{M_1, \alpha_1, \beta_1}$ and $\pi_{M_2, \alpha_2, \beta_2}$ are isomorphic if and only if
\begin{gather*}
  M_1 - M_2 \in \MatT{N}(\ZZ)
\text{,}\quad
  \alpha_1 - \alpha_2,\, \beta_1 - \beta_2 \in M \ZZ^N + \ZZ^N
\text{.}
\end{gather*}
\end{proposition}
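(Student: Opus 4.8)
The plan is to establish four things in turn: that $\pi_{M,\alpha,\beta}$ is a well-defined representation of $\rmH(\tGamma^{\rmJ(N)})$, that it is irreducible, and then each direction of the isomorphism criterion; Lemmas~\ref{la:integral-M-and-its-inverse} and~\ref{la:integral-evaluation-of-M} will carry most of the arithmetic. First I would verify that the displayed formulas respect the group law. The elements $[\lambda,0,0]^\rmJ$ commute with one another, and so do the $[0,\mu,0]^\rmJ$, so the only relation to check is the Heisenberg commutator $[\lambda,0,0]^\rmJ[0,\mu,0]^\rmJ=[0,\mu,0]^\rmJ[\lambda,0,0]^\rmJ[0,0,\lambda\rT\mu+\mu\rT\lambda]^\rmJ$, which one reads off the product law. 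Evaluating $\pi_{M,\alpha,\beta}$ on $\frake_r$ on both sides and cancelling, the identity collapses to a congruence modulo $\ZZ$ relating $M^\ZZ$ to the central character $e(\tfrac{1}{2}\tr(M\,\cdot\,))$, and this is exactly what Lemmas~\ref{la:integral-M-and-its-inverse} and~\ref{la:integral-evaluation-of-M} provide; the same lemmas show that $\frake_{r+M^\ZZ\lambda}$ is correctly indexed, since $M^\ZZ\ZZ^N=M\ZZ^N+\ZZ^N$, so $r+M^\ZZ\lambda$ again runs over $(M\ZZ^N+\ZZ^N)/\ZZ^N$.

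For irreducibility I would work in the basis $(\frake_r)$. On it the operators $\pi_{M,\alpha,\beta}([0,\mu,0]^\rmJ)$, $\mu\in\ZZ^N$, form a commuting family of diagonal operators with joint eigenvalues $r\mapsto e(\rT(\beta+r)\mu)$, and since the pairing $(r,\mu)\mapsto e(\rT r\mu)$ on $(M\ZZ^N+\ZZ^N)/\ZZ^N\times\ZZ^N$ is nondegenerate -- because $(\ZZ^N)^\vee=\ZZ^N$ -- distinct $r$ give distinct characters. Hence any nonzero invariant subspace, being invariant under this commuting diagonalizable family, is spanned by a subset of the $\frake_r$. On the other hand $\pi_{M,\alpha,\beta}([\lambda,0,0]^\rmJ)$ sends $\frake_r$ to a scalar times $\frake_{r+M^\ZZ\lambda}$, and $r\mapsto r+M^\ZZ\lambda$ acts transitively on $(M\ZZ^N+\ZZ^N)/\ZZ^N$, again by $M^\ZZ\ZZ^N=M\ZZ^N+\ZZ^N$; combining the two observations, the only nonzero invariant subspace is all of $V_{M,\alpha,\beta}$.

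For the isomorphism criterion I would run both directions through one chain of reductions. Given an intertwiner $\Phi$ between $\pi_{M_1,\alpha_1,\beta_1}$ and $\pi_{M_2,\alpha_2,\beta_2}$, which is an isomorphism by irreducibility, comparing the scalars acting on the centre forces the central characters to agree, hence $M_1-M_2\in\MatT{N}(\ZZ)$; set $M:=M_1$. Next, the $[0,\mu,0]^\rmJ$-spectrum of $\pi_{M,\alpha,\beta}$ is the coset $(\beta+M\ZZ^N+\ZZ^N)/\ZZ^N$ of $(M\ZZ^N+\ZZ^N)/\ZZ^N$ inside $\QQ^N/\ZZ^N$, and $\Phi$ carries one spectrum to the other, forcing $\beta_1-\beta_2\in M\ZZ^N+\ZZ^N$. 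After composing $\Phi$ with the permutation $\frake_r\mapsto\frake_{r+(\beta_1-\beta_2)}$, which is legitimate precisely because $\beta_1-\beta_2$ lies in $M\ZZ^N+\ZZ^N$, the intertwiner becomes diagonal, say $\frake_r\mapsto c_r\frake_r$, and intertwining the $[\lambda,0,0]^\rmJ$-action yields $c_{r+M^\ZZ\lambda}/c_r=e(\rT(\alpha_1-\alpha_2)\lambda)$; taking $\lambda\in(M^\ZZ)^{-1}\ZZ^N=M^{-1}\ZZ^N\cap\ZZ^N$ (so $M^\ZZ\lambda\in\ZZ^N$ and the left side is $1$) gives $\rT(\alpha_1-\alpha_2)\lambda\in\ZZ$ there, that is $\alpha_1-\alpha_2\in(M^{-1}\ZZ^N\cap\ZZ^N)^\vee=M\ZZ^N+\ZZ^N$. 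Conversely, reading these reductions backwards builds the isomorphism: modifying $M$ within $\MatT{N}(\ZZ)$ changes neither $V_{M,\alpha,\beta}$ nor the formulas, a shift of $\beta$ by $M\ZZ^N+\ZZ^N$ is realized by the permutation above, and a shift of $\alpha$ by $w\in M\ZZ^N+\ZZ^N$ is realized by the diagonal operator $\frake_{M^\ZZ\lambda}\mapsto e(\rT w\lambda)\frake_{M^\ZZ\lambda}$, which is well defined exactly because $w\in(M^{-1}\ZZ^N\cap\ZZ^N)^\vee$ and which one checks intertwines the two representations.

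I expect the main obstacle to be bookkeeping rather than anything conceptual: keeping straight the chain of commensurable lattices $\ZZ^N\subseteq M^\ZZ\ZZ^N=M\ZZ^N+\ZZ^N$ and $M^{-1}\ZZ^N\cap\ZZ^N=(M^\ZZ)^{-1}\ZZ^N$, together with the $\QQ/\ZZ$-duality that swaps them, and pinning down the sign and the factor $\tfrac{1}{2}$ in the central-character computation carefully enough that it really does determine $M$ modulo $\MatT{N}(\ZZ)$. Lemmas~\ref{la:integral-M-and-its-inverse} and~\ref{la:integral-evaluation-of-M} are precisely the tools that make these steps mechanical.
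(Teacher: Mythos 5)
Your proposal is correct and follows essentially the same route as the paper: verify the Heisenberg commutator relation, get irreducibility from the distinct $[0,\mu,0]^\rmJ$-eigencharacters together with the transitive permutation of eigenlines by the $[\lambda,0,0]^\rmJ$, extract $M$, $\beta$, $\alpha$ in turn from the central character and the two spectra, and build the converse intertwiner explicitly (your permutation-times-diagonal factorization is the paper's single map $\frake_{r,1}\mapsto e(\rT\alpha r)\frake_{r+\beta,2}$ written in two steps). The only blemish is an immaterial sign in $c_{r+M^\ZZ\lambda}/c_r=e(\rT(\alpha_2-\alpha_1)\lambda)$, which does not affect the integrality conclusion.
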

\begin{proof}
Clearly, the action of both $[\lambda, 0, 0]^\rmJ$ and $[0, \mu, 0]^\rmJ$ on $\frake_r$ only depends on $r$ modulo~$\ZZ^N$.  We have to check that
\begin{gather*}
  \pi([\lambda, 0, 0]^\rmJ) \pi([0, \mu, 0, 0]^\rmJ)
  \pi([0, 0, 2 \lambda \rT \mu ]^\rmJ)\,
  \frake_r
=
  e(\rT \mu M \lambda)
  e(\rT \alpha \lambda )
  e\big( \rT (\beta + r) \mu  \big) \,
  \frake_{r + M^\ZZ \lambda}
\end{gather*}
equals
\begin{gather*}
  \pi([0, \mu, 0]^\rmJ) \pi([\lambda, 0, 0]^\rmJ)\,
  \frake_r
=
  e(\rT \alpha \lambda )
  e\big(\rT (\beta + r + M \lambda) \mu \big)\,
  \frake_{r + M^\ZZ \lambda}
\text{,}
\end{gather*}
which is obvious.  This show that $\pi_{M, \alpha, \beta}$ is a representation.

Observe that the set of all $[0, \mu, 0]^\rmJ$ ($\mu \in \ZZ^N$) forms a commutative subgroup of $\rmH( \tGamma^{\rmJ(N)} )$.  The system of eigenvalues of $\frake_r$ under the action of $[0, \mu, 0]^\rmJ$ is different for distinct $r \in (M \ZZ^N + \ZZ^N) \slashdiv \ZZ^N$.  Further, the corresponding eigenspaces are permuted transitively by the action of $[\lambda, 0, 0]^\rmJ$ ($\lambda \in \ZZ^N$).  Thus we see that $\pi_{M, \alpha, \beta}$ is irreducible.

In the remainder of this proof, we will use Lemma~\ref{la:integral-M-and-its-inverse} without further reference.  Let $\pi_{M_1, \alpha_1, \beta_1}$ and $\pi_{M_2, \alpha_2, \beta_2}$ by isomorphic representations.  By checking central characters, we find that $M_1 - M_2 \in \MatT{N}(\ZZ)$.  The subgroup of all $[ 0, \mu, 0]^\rmJ$ ($\mu \in M^{-1} \ZZ^N \cap \ZZ^N$) acts by $e(\rT \beta_1 \mu)$ and $e(\rT \beta_2 \mu)$ on $V_{M_1, \alpha_1, \beta_2}$ and $V_{M_2, \alpha_2, \beta_2}$, respectively.  Consequently, we have $\rT \mu (\beta_1 - \beta_2) \in \ZZ$ for all $\mu \in M^{-1}\ZZ^N \cap \ZZ^N$.  We conclude that $\beta_1 - \beta_2 \in M \ZZ^N + \ZZ^N$.  Finally, $[\lambda, 0, 0]^\rmJ$ ($\lambda \in M^{-1} \ZZ^N \cap \ZZ^N$) acts by scalars and the attached eigenvalues are $e(\rT \alpha_1 \lambda)$ and $e(\rT \alpha_2 \lambda)$.  In analogy to the previous considerations, we find that $\rT \lambda (\alpha_1 - \alpha_2) \in \ZZ$ for all $\lambda \in M^{-1} \ZZ^N \cap \ZZ^N$.  Hence $\alpha_1 - \alpha_2 \in M \ZZ^N + \ZZ^N$.

Conversely, suppose that
\begin{gather*}
  M_1 - M_2 \in \MatT{N}(\ZZ)
\text{,}\quad
  \alpha_1 - \alpha_2,\, \beta_1 - \beta_2 \in M \ZZ^N + \ZZ^N
\text{.}
\end{gather*}
We have to show that $\pi_{M_1, \alpha_1, \beta_1} \cong \pi_{M_2, \alpha_2, \beta_2}$.  First of all, the central characters coincide.  Denote the previously chosen basis elements of $V_{M_1, \alpha_1, \beta_1}$ and $V_{M_2, \alpha_2, \beta_2}$ by $\frake_{r, 1}$ and $\frake_{r, 2}$.  Let $\alpha = (M^\ZZ)^{-1} \, ( \alpha_2 - \alpha_1) \in \ZZ^N$ and $\beta =  \beta_1 - \beta_2  \in M \ZZ^N + \ZZ^N$.  We claim that the map
\begin{gather*}
  \iota :\,
  \frake_{r, 1}
\mapsto
  e( \rT \alpha r)\,
  \frake_{r + \beta, 2}
\end{gather*}
intertwines $\pi_{M_1, \alpha_1, \beta_1}$ and $\pi_{M_2, \alpha_2, \beta_2}$.  We have
\begin{multline*}
  \iota\big( \pi_{M_1, \alpha_1, \beta_1}( [0, \mu, 0]^\rmJ )\, \frake_{r,1} \big)
=
  e( \rT \alpha r ) e\big( \rT (\beta_1 + r) \mu \big)\,
  \frake_{r + \beta,2}
\\[3pt]
=
  e\big((\beta_2 + r + \beta) \rT \mu \big) e( \rT \alpha r )\,
  \frake_{r + \beta,2}
=
  \pi_{M_2, \alpha_2, \beta_2}( [0, \mu, 0]^\rmJ )\, \iota( \frake_{r,1} )
\end{multline*}
and
\begin{multline*}
  \iota\big( \pi_{M_1, \alpha_1, \beta_1}( [\lambda, 0, 0]^\rmJ )\, \frake_{r,1} \big)
=
  e\big( \rT \alpha_1 \lambda \big) e\big( \rT \alpha ( r + M^\ZZ \lambda ) \big)\,
  \frake_{r + M^\ZZ \lambda + \beta,2}
\\[3pt]
=
  e\big( \rT \alpha_2 \lambda \big) e( \rT \alpha r )\,
  \frake_{r + M^\ZZ \lambda + \beta,2}
=
  \pi_{M_2, \alpha_2, \beta_2}( [\lambda, 0, 0]^\rmJ )\, \iota( \frake_{r,1} )
\text{.}
\end{multline*}
This completes the proof of Proposition~\ref{prop:irreps-of-heisenberg-part-predefined}.
\end{proof}

\begin{proposition}
\label{prop:irreps-of-heisenberg-part}
Fix $0 < M \in \MatT{N}(\QQ)$.  Any irreducible, finite dimensional, unitary representation of $\rmH( \tGamma^{\rmJ(N)} )$ with central character $e(\frac{1}{2} \tr(M \,\cdot\,))$ is isomorphic to $\pi_{M, \alpha, \beta}$ for some $\alpha, \beta \in \QQ^N$.
\end{proposition}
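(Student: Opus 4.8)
The plan is to show that an arbitrary irreducible unitary $\pi$ with the prescribed central character is determined, up to isomorphism, by a pair of characters encoding the action of the two abelian subgroups of $\rmH(\tGamma^{\rmJ(N)})$ generated by the $[\lambda,0,0]^\rmJ$ and the $[0,\mu,0]^\rmJ$, and then to match $\pi$ with one of the explicit models $\pi_{M,\alpha,\beta}$ from Proposition~\ref{prop:irreps-of-heisenberg-part-predefined}. First I would isolate the large abelian subgroup $A \subset \rmH(\tGamma^{\rmJ(N)})$ generated by $\Mat{N}(\ZZ)$ (the center) together with all $[0,\mu,0]^\rmJ$ with $\mu \in \ZZ^N$; since $\pi$ is finite-dimensional and unitary, $\pi|_A$ decomposes into one-dimensional characters, so there is some character $\chi$ of $A$ occurring in $\pi$, with a nonzero vector $v$ spanning a line on which $A$ acts by $\chi$. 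The center forces $\chi$ to restrict to $e(\frac12\tr(M\,\cdot\,))$ there.

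Next I would analyze how $[\lambda,0,0]^\rmJ$ moves this line. From the group law, $[\lambda,0,0]^\rmJ [0,\mu,0]^\rmJ [0,0,-2\lambda\rT\mu]^\rmJ = [0,\mu,0]^\rmJ[\lambda,0,0]^\rmJ$, so conjugating the $A$-character by $\pi([\lambda,0,0]^\rmJ)$ twists the $[0,\mu,0]^\rmJ$-eigenvalue by $e(\rT\mu M\lambda)$. Hence the $A$-weight of $\pi([\lambda,0,0]^\rmJ)v$ is shifted, as a character of the $\mu$-subgroup, by the character $\mu \mapsto e(\rT\mu M\lambda)$. Two such weights for $\lambda,\lambda'$ agree if and only if $M(\lambda-\lambda')\in\ZZ^N$, i.e. exactly when $\lambda-\lambda'$ lies in the kernel lattice $M^{-1}\ZZ^N\cap\ZZ^N = (M^\ZZ)^{-1}\ZZ^N$ (using Lemma~\ref{la:integral-M-and-its-inverse}). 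Therefore the distinct $A$-weights appearing in the span of the $\pi([\lambda,0,0]^\rmJ)v$ are indexed by $\lambda \in \ZZ^N / (M^\ZZ)^{-1}\ZZ^N$, equivalently by $r = M^\ZZ\lambda \in M^\ZZ\ZZ^N/\ZZ^N = (M\ZZ^N+\ZZ^N)/\ZZ^N$, which is precisely the index set $V_{M,\alpha,\beta}$ from~\eqref{eq:pi-Mab-base-space}. The subgroup $(M^\ZZ)^{-1}\ZZ^N$ acts on $v$ by a character, which I would record as $e(\rT\alpha\lambda)$ for a suitable $\alpha\in\QQ^N$ (well-defined modulo $M\ZZ^N+\ZZ^N$), and the base $\mu$-character I would record as $e(\rT\beta\mu)$ for suitable $\beta\in\QQ^N$; note $M[\lambda]\in\ZZ$ on the kernel lattice by Lemma~\ref{la:integral-evaluation-of-M}, which is what makes the normalization of $\alpha$ consistent with the $e(2\lambda\rT\mu)$ commutator term.

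Having extracted $\alpha,\beta$, I would set $\frake_r := \pi([\lambda,0,0]^\rmJ)v$ for a chosen representative $\lambda$ with $M^\ZZ\lambda \equiv r$, check that the span $W$ of these vectors is $\rmH(\tGamma^{\rmJ(N)})$-invariant by direct computation with the group law (the center acts by the given scalar, $[0,\mu,0]^\rmJ$ acts diagonally with eigenvalue $e(\rT(\beta+r)\mu)$ up to the contribution picked up from commuting past the $[\lambda,0,0]^\rmJ$, and $[\lambda',0,0]^\rmJ$ permutes the $\frake_r$ with the correct $e(\rT\alpha\lambda')$ cocycle), so that the linear map $\frake_r \mapsto \frake_r$ identifies $W$ with $\pi_{M,\alpha,\beta}$ as $\rmH(\tGamma^{\rmJ(N)})$-representations. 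Since $\pi$ is irreducible and $W\neq 0$ is invariant, $\pi = W \cong \pi_{M,\alpha,\beta}$, finishing the proof. The main obstacle I anticipate is purely bookkeeping: pinning down $\alpha$ and $\beta$ so that the commutator phases $e(\rT\mu M\lambda)$ and the central cocycle $e(2\lambda\rT\mu)$ are absorbed exactly as in the definition of $\pi_{M,\alpha,\beta}$, and verifying that the choice of coset representatives $\lambda$ does not affect the resulting vectors $\frake_r$ beyond a harmless scalar — this is where Lemmas~\ref{la:integral-M-and-its-inverse} and~\ref{la:integral-evaluation-of-M} do the real work, and where a sign or a factor of two could easily go wrong.
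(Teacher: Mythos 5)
Your argument follows essentially the same route as the paper: diagonalize the abelian subgroup of the $[0,\mu,0]^\rmJ$, observe via the commutation relation that $\pi([\lambda,0,0]^\rmJ)$ shifts the eigencharacter by $\mu\mapsto e(\rT\mu M\lambda)$, index the resulting weights by $(M\ZZ^N+\ZZ^N)\slashdiv\ZZ^N$, and match the span of the translates of $v$ with the model $\pi_{M,\alpha,\beta}$. The one place where you assert more than you have established is the sentence ``the subgroup $(M^\ZZ)^{-1}\ZZ^N$ acts on $v$ by a character.'' For an arbitrary $A$-eigenvector $v$ this is not automatic: the elements $[\lambda,0,0]^\rmJ$ with $\lambda\in M^{-1}\ZZ^N\cap\ZZ^N$ preserve the $\chi$-weight space but need not act on your chosen $v$ by scalars if that weight space has dimension greater than one, and in that case the span $W$ of the translates $\pi([\lambda,0,0]^\rmJ)v$ would not be isomorphic to $\pi_{M,\alpha,\beta}$ (its weight spaces could be higher-dimensional). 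This is exactly the contingency the paper devotes the final paragraph of its proof to, by showing that $\pi$ would then be reducible. Your version is repaired by a one-line addition: since $M\lambda\in\ZZ^N$ forces $e(\rT\mu M\lambda)=1$, the operators $\pi([\lambda,0,0]^\rmJ)$ for $\lambda\in M^{-1}\ZZ^N\cap\ZZ^N$ commute with the $\pi([0,\mu,0]^\rmJ)$ and with each other, so you may choose $v$ from the outset to be a simultaneous unitary eigenvector of this larger abelian family; with that choice the rest of your bookkeeping goes through and irreducibility gives $\pi=W\cong\pi_{M,\alpha,\beta}$. Make that choice explicit and the proof is complete (and arguably cleaner than the paper's averaging argument).
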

\begin{proof}
Fix a representation $\pi$ as in the assumption.  Since the set of all $[0, \mu, 0]^\rmJ$ ($\mu \in \ZZ^N$) is a commutative subgroup of $\rmH( \tGamma^{\rmJ(N)} )$, we can find a basis of eigenvectors $v_i$, $1 \le i \le \dim\,\pi$.  Denote the eigenvalue of an eigenvector $v$ under $[0, \mu, 0]^\rmJ$ by $e( \rT \beta(v) \mu )$.  A calculation that is analogous to the computations in the proof of Proposition~\ref{prop:irreps-of-heisenberg-part-predefined} shows that $\pi( [\lambda, 0, 0]^\rmJ )\, v$ is an eigenvector under the action of $[0, \mu, 0]^\rmJ$ if so is $v$.  We have $\beta(\pi( [\lambda, 0, 0]^\rmJ )\, v) = \beta(v) + M^\ZZ \lambda$.

Fix any eigenvector $v$.  Since $\pi$ is unitary and finite dimensional, the orbit $\pi(v) = \big\{ [\lambda,0 , 0]^\rmJ \cdot \lspan{}_\CC (v) \,:\, \lambda \in \ZZ^N \big\}$ is finite.  By inspection of $\beta(\pi( [\lambda, 0, 0]^\rmJ )\,v, \mu)$, we find that the cardinality $\# \pi(v)$ is a multiple of $\# (M \ZZ^N + \ZZ^N) \slashdiv \ZZ^N$.  If it is equal, let $e(\rT \alpha \lambda)$ be the eigenvalue of $v$ under $[\lambda, 0, 0]^\rmJ$ ($\lambda \in M^{-1} \ZZ^N \cap \ZZ^N$).  It is straightforward to verify that $\pi$ is isomorphic to $\pi_{M, \alpha, \beta(v)}$ via the map
\begin{gather*}
  [\lambda, 0, 0]^\rmJ\, v
\mapsto
  e( \rT \alpha \lambda)\, \frake_{M^\ZZ \lambda}
\end{gather*}

In order to complete the proof, we show that $\pi$ is reducible if $\# \pi(v) \ne \# (M \ZZ^N + \ZZ^N) \slashdiv \ZZ^N$.  Define
\begin{gather*}
  B
=
  \ZZ^N \slashdiv \{ \lambda \in \ZZ^N \,:\, [\lambda, 0, 0]^\rmJ\, \lspan{}_\CC(v) = \lspan{}_\CC(v) \}
\text{.}
\end{gather*}
Set ${\td v} = \sum_{\lambda \in B} [\lambda, 0, 0]^\rmJ\, v$, where we fix one choice of representatives of $B$.  By construction of $B$, we see that ${\td v}$ is an eigenvector under all $[0, \mu, 0]^\rmJ$ ($\mu \in \ZZ^N$).  Straightforward computation of the corresponding orbit yields $\# \pi({\td v}) = \# (M \ZZ^N + \ZZ^N) \slashdiv \ZZ^N$, and so ${\td v}$ generates a subrepresentation of $\pi$.  This finishes the proof.
\end{proof}
\vspace{1ex}

% \begin{lemma}
% Fix $\alpha, \beta \in \QQ^N$.  Given $\gamma = \left(\begin{smallmatrix} a & b \\ c & d \end{smallmatrix}\right) \in \SL{2}(\ZZ)$ there is a basis of eigenvectors $w_r \in V_{M, \alpha, \beta}$ ($r \in M \ZZ^N + \ZZ^N \slashdiv \ZZ^N$) with respect to
% \begin{gather*}
%   \big\{
%     [ c \mu, d \mu, 0]^\rmJ \,:\,
%     \mu \in \ZZ^N
%   \big\}
% \text{.}
% \end{gather*}
% We have
% \begin{gather*}
%   [a \lambda, b \lambda, 0]^\rmJ \, w_r
% =
%   ??? w_{r + A(M)^{-1} \lambda}
% \text{.}
% \end{gather*}
% \end{lemma}
% \begin{proof}
% Set
% \begin{gather*}
%   w_r
% =
%   ???
% \text{.}
% \end{gather*}
% Then we have
% \end{proof}

\subsection{Representations of $\tGamma^{\rmJ(N)}$}

We have classified representations of $\rmH( \tGamma^{\rmJ(N)} )$, and the next step is to study in which manner they occur as building blocks of representations of $\tGamma^{\rmJ(N)}$.  We henceforth assume that $\rho$ is an irreducible finite dimensional, unitary representation of $\tGamma^{\rmJ(N)}$ with central character $e(\frac{1}{2} \tr( M \,\cdot\,))$.  The restriction of $\rho$ to $\rmH( \tGamma^{\rmJ(N)} )$ is denoted by $\rho |_{\rmH( \tGamma^{\rmJ(N)} )}$.
\begin{lemma}
\label{la:permutation-of-heisenberg-irreps}
Suppose that $\pi$ is a subrepresentation of $\rho |_{\rmH( \tGamma^{\rmJ(N)} )}$.  If $\pi$ is isomorphic to $\pi_{M, \alpha, \beta}$, then for $\gamma \in \Mp{2}(\ZZ)$, $\gamma \pi$ is also a subrepresentation of $\rho |_{\rmH( \tGamma^{\rmJ(N)} )}$.  It is isomorphic to $\pi_{M, \alpha', \beta'}$, where $(\alpha', \beta') = (\alpha, \beta) \rT \gamma$ and $\gamma$ acts via the projection $\Mp{2}(\ZZ) \twoheadrightarrow \SL{2}(\ZZ)$.

Furthermore, the image of $\frake_r$ given in~\eqref{eq:pi-Mab-base-space} under $\gamma$ is uniquely determined by relations in~$\tGamma^{\rmJ(N)}$.
\end{lemma}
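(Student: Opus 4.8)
The plan is to use that $\rmH(\tGamma^{\rmJ(N)})$ is a normal subgroup of $\tGamma^{\rmJ(N)}$ (by the semidirect–central structure), so that $\Mp{2}(\ZZ)$ permutes its isotypic pieces inside $\rho$ by conjugation. I would first record the conjugation formula, read off directly from the product in $\tGamma^{\rmJ(N)}$:
\[
  (\gamma, \omega)\, [\lambda, \mu, \kappa]^\rmJ \,(\gamma, \omega)^{-1}
  = [(\lambda, \mu)\gamma^{-1}, \kappa]^\rmJ ,
\]
where the metaplectic datum $\omega$ drops out and the centre (the $\kappa$-part) is fixed. Consequently, if $\pi$ is realized on a subspace $W \subseteq V(\rho)$, then $\rho((\gamma,\omega))W$ is again $\rmH(\tGamma^{\rmJ(N)})$-stable, and the subrepresentation it carries is, as an abstract representation, $h \mapsto \pi((\gamma,\omega)^{-1} h\, (\gamma,\omega))$; by the displayed formula this depends only on the image $\gamma \in \SL{2}(\ZZ)$, and on the Heisenberg part it is $[\lambda,\mu,\kappa]^\rmJ \mapsto \pi([(\lambda,\mu)\gamma, \kappa]^\rmJ)$. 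Call this representation $\gamma\pi$; this already gives the first sentence of the lemma.

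Next I would identify $\gamma\pi$ with a standard model. Conjugation fixes the centre, so $\gamma\pi$ has central character $e(\tfrac{1}{2}\tr(M\,\cdot\,))$, and Proposition~\ref{prop:irreps-of-heisenberg-part} gives $\gamma\pi \cong \pi_{M, \alpha', \beta'}$ for some $\alpha', \beta' \in \QQ^N$. By Proposition~\ref{prop:irreps-of-heisenberg-part-predefined}, the class of $\pi_{M,\alpha',\beta'}$ is pinned down by the scalars by which $[\lambda, 0, 0]^\rmJ$ and $[0, \mu, 0]^\rmJ$, for $\lambda, \mu \in M^{-1}\ZZ^N \cap \ZZ^N$, act; so I would compute these in $\gamma\pi$. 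Writing $\gamma = \left(\begin{smallmatrix} a & b \\ c & d \end{smallmatrix}\right)$, one has $\gamma\pi([\lambda,0,0]^\rmJ) = \pi([a\lambda, b\lambda, 0]^\rmJ)$ and $\gamma\pi([0,\mu,0]^\rmJ) = \pi([c\mu, d\mu, 0]^\rmJ)$; expanding these through the generators $[\ast,0,0]^\rmJ$, $[0,\ast,0]^\rmJ$ and the central corrector produced by the Heisenberg commutator, and using Lemma~\ref{la:integral-M-and-its-inverse} (so that $a M^\ZZ \lambda$, $b M^\ZZ \mu$, etc.\ lie in $\ZZ^N$ and the generators act diagonally on the $\frake_r$) and Lemma~\ref{la:integral-evaluation-of-M} (to control the corrector), the scalars come out to be $e(\rT(a\alpha + b\beta)\lambda)$ and $e(\rT(c\alpha + d\beta)\mu)$. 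Hence $(\alpha',\beta') \equiv (\alpha,\beta)\rT\gamma$ modulo $M\ZZ^N + \ZZ^N$, which is the asserted description of $\gamma\pi$.

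For the last assertion I would argue as follows. Fix the basis $\{\frake_r\}$ of the $\pi$-realization inside $V(\rho)$ from~\eqref{eq:pi-Mab-base-space}. Since the $\frake_r$ form a single orbit, up to scalars, under $[\lambda, 0, 0]^\rmJ$ ($\lambda \in \ZZ^N$), each $\frake_r$ equals an explicit scalar times $\rho([\lambda,0,0]^\rmJ)\frake_{r_0}$ for a fixed reference index $r_0$; applying $\rho((\gamma,\omega))$ and commuting it past $\rho([\lambda,0,0]^\rmJ)$ by means of $\rho((\gamma,\omega))\,\rho([\lambda,0,0]^\rmJ) = \rho([(\lambda,0)\gamma^{-1},0]^\rmJ)\,\rho((\gamma,\omega))$ expresses every $\rho((\gamma,\omega))\frake_r$ as a product of $\rho$-operators on $\rmH(\tGamma^{\rmJ(N)})$ applied to the single vector $\rho((\gamma,\omega))\frake_{r_0}$. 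Matching the eigensystem of $\rho((\gamma,\omega))\frake_r$ under the relevant commutative subgroup with that of a standard basis vector of $\pi_{M,\alpha',\beta'}$ then forces $\rho((\gamma,\omega))\frake_r$ to be a definite one of them up to scalar; by Schur's lemma only a single global scalar remains, and it too is fixed since $\rho$ and $\{\frake_r\}$ are. Thus the images $\rho((\gamma,\omega))\frake_r$ are determined by the relations in $\tGamma^{\rmJ(N)}$.

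I expect the main obstacle to be the bookkeeping in the identification of $\gamma\pi$: keeping track of the commutator cocycle produced when rewriting $[(\lambda,\mu)\gamma,\kappa]^\rmJ$ in terms of the generators, and checking — through the integrality supplied by Lemmas~\ref{la:integral-M-and-its-inverse} and~\ref{la:integral-evaluation-of-M} together with the standing assumptions on $M$ — that this corrector does not alter the linear character, so that one lands exactly on $(\alpha,\beta)\rT\gamma$ rather than on a twist of it.
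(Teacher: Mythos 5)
Your proposal is correct and follows essentially the same route as the paper: normality of $\rmH(\tGamma^{\rmJ(N)})$ gives the conjugated subrepresentation, preservation of the central character plus Proposition~\ref{prop:irreps-of-heisenberg-part} identifies it as some $\pi_{M,\alpha',\beta'}$, the parameters are read off from the action of $[\lambda,0,0]^\rmJ$ and $[0,\mu,0]^\rmJ$ with $\lambda,\mu\in M^{-1}\ZZ^N\cap\ZZ^N$ using the commutation relations and Lemma~\ref{la:integral-evaluation-of-M} to kill the central corrector, and the final uniqueness claim rests on the pairwise distinct eigensystems under $[0,\mu,0]^\rmJ$. The only cosmetic difference is that you spell out the Schur-lemma bookkeeping for the residual scalar, which the paper leaves implicit.
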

\begin{corollary}
Let ${\td \pi}_{M, \alpha, \beta}$ denote the isotypical components of $\rho |_{\rmH( \tGamma^{\rmJ(N)} )}$.  Then we have $\gamma {\td \pi}_{M, \alpha, \beta} = {\td \pi}_{M, \alpha', \beta'}$, where $(\alpha', \beta') = (\alpha, \beta) \gamma$.
\end{corollary}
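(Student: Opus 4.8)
The plan is to deduce the Corollary formally from Lemma~\ref{la:permutation-of-heisenberg-irreps}; the only structural inputs are that $\rmH := \rmH(\tGamma^{\rmJ(N)})$ is a normal subgroup of $\tGamma^{\rmJ(N)}$ and the general principle that a group normalizing $\rmH$ permutes the isotypical blocks of a completely reducible $\rmH$-representation. To begin, I would make the conjugation action explicit: a short computation with the group law of $\tGamma^{\rmJ(N)}$ gives, for any lift $(\gamma, \omega) \in \Mp{2}(\ZZ) \subset \tGamma^{\rmJ(N)}$ of $\gamma$, the identity $(\gamma, \omega)^{-1} [\lambda, \mu, \kappa]^\rmJ (\gamma, \omega) = [(\lambda, \mu)\gamma, \kappa]^\rmJ$. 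In particular $\rmH$ is normal, and conjugation by $(\gamma, \omega)$ acts on it through the projection $\Mp{2}(\ZZ) \thra \SL{2}(\ZZ)$. Consequently, if $W \subseteq V(\rho)$ is an $\rmH$-subrepresentation then $\rho((\gamma, \omega))(W)$ is again $\rmH$-stable, and the $\rmH$-action on it is the one obtained from the action on $W$ by precomposing with $\Ad((\gamma, \omega)^{-1})$; by Lemma~\ref{la:permutation-of-heisenberg-irreps}, if $W \cong \pi_{M, \alpha, \beta}$ then $\rho((\gamma, \omega))(W) \cong \pi_{M, \alpha', \beta'}$ with $(\alpha', \beta') = (\alpha, \beta)\gamma$.

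Next I would check that the resulting operation on subspaces does not depend on the choice of metaplectic lift: the two lifts of $\gamma$ differ by the nontrivial element of $\kernel(\Mp{2}(\ZZ) \thra \SL{2}(\ZZ))$, which is central in $\tGamma^{\rmJ(N)}$ and hence acts on the irreducible $\rho$ by a scalar by Schur's lemma. This legitimizes the symbol $\gamma {\td\pi}_{M, \alpha, \beta}$. Now by Propositions~\ref{prop:irreps-of-heisenberg-part-predefined} and~\ref{prop:irreps-of-heisenberg-part}, every irreducible constituent of $\rho|_{\rmH}$ is of the form $\pi_{M, \alpha, \beta}$, its isomorphism class depends only on $(\alpha, \beta)$ modulo $M\ZZ^N + \ZZ^N$, and $V(\rho)$ is the direct sum of the isotypical components ${\td\pi}_{M, \alpha, \beta}$, so the objects in the statement are well defined. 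Since ${\td\pi}_{M, \alpha, \beta}$ is the sum of all irreducible $\rmH$-subrepresentations of $V(\rho)$ isomorphic to $\pi_{M, \alpha, \beta}$, applying the linear automorphism $\rho((\gamma, \omega))$ together with the preceding paragraph exhibits $\gamma {\td\pi}_{M, \alpha, \beta}$ as a sum of irreducible $\rmH$-subrepresentations each isomorphic to $\pi_{M, \alpha', \beta'}$, whence $\gamma {\td\pi}_{M, \alpha, \beta} \subseteq {\td\pi}_{M, \alpha', \beta'}$. Running the same argument with $\gamma^{-1}$, and using $(\alpha', \beta')\gamma^{-1} = (\alpha, \beta)$, gives $\gamma^{-1} {\td\pi}_{M, \alpha', \beta'} \subseteq {\td\pi}_{M, \alpha, \beta}$; applying the bijection $\rho((\gamma, \omega))$ yields the reverse inclusion, so $\gamma {\td\pi}_{M, \alpha, \beta} = {\td\pi}_{M, \alpha', \beta'}$.

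I do not expect a genuine obstacle here; the statement is a routine corollary. The only points requiring care are the explicit conjugation formula — immediate from the group law of $\tGamma^{\rmJ(N)}$ — and the independence of the action on isotypical components from the metaplectic lift. Everything else is the standard permutation-of-blocks argument, arranged so as to invoke Lemma~\ref{la:permutation-of-heisenberg-irreps}.
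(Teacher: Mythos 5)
Your proof is correct and follows essentially the route the paper intends: the corollary is an immediate consequence of Lemma~\ref{la:permutation-of-heisenberg-irreps} via the standard fact that an element normalizing $\rmH(\tGamma^{\rmJ(N)})$ permutes the isotypical components of a completely reducible restriction, and your verification of the conjugation formula and of independence from the metaplectic lift fills in exactly the details the paper leaves implicit. The only caveat is a notational discrepancy already present in the paper itself (the lemma writes $(\alpha',\beta') = (\alpha,\beta)\,\rT\gamma$ while the corollary writes $(\alpha',\beta') = (\alpha,\beta)\gamma$), which does not affect the substance of your argument.
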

\begin{proof}[Proof of Lemma~\ref{la:permutation-of-heisenberg-irreps}]
Any irreducible representation of $\rmH( \tGamma^{\rmJ(N)} )$ with central character $e(\frac{1}{2}\tr(M \,\cdot\,))$ has dimension $(M \ZZ^N + \ZZ^N) \slashdiv \ZZ^N$.  By the commutation rules in $\tGamma^{\rmJ(N)}$, $\gamma {\td \pi}_{M, \alpha, \beta}$ is hence irreducible.  Proposition~\ref{prop:irreps-of-heisenberg-part} shows that it is isomorphic to $\pi_{M, \alpha', \beta'}$ for some $\alpha', \beta' \in \QQ^N$.  In order to compute $\alpha'$ and $\beta'$, it suffices to consider the action of $[\lambda, 0, 0]^\rmJ$ ($\lambda \in M^{-1} \ZZ^N \cap \ZZ^N$) and $[0, \mu, 0]^\rmJ$ ($\lambda \in M^{-1} \ZZ^N \cap \ZZ^N$) on $\frake_0 \in V_{M, \alpha, \beta}$.  We find that
\begin{align*}
  [\lambda, 0, 0]^\rmJ \gamma\, \frake_0
=&
  \gamma [a \lambda, b \lambda, 0]^\rmJ\, \frake_0
=
  e(-a b M[\lambda])\,
  e( \rT \beta b \lambda ) e( \rT \alpha a \lambda)\, \frake_0
\\[4pt]
  [0, \mu, 0]^\rmJ \gamma\, \frake_0
=&
  \gamma [c \mu, d \mu, 0]^\rmJ\, \frake_0
=
  e(-c d M[\mu])\,
  e( \rT \beta d \mu ) e( \rT \alpha c \mu)\, \frake_0
\text{.}
\end{align*}
Since, by Lemma~\ref{la:integral-evaluation-of-M}, $M[\lambda]$ and $M[\mu]$ are integral, the proof of the first half of Lemma~\ref{la:permutation-of-heisenberg-irreps} is complete.

Since $\pi$ is unitary, the second half of Lemma~\ref{la:permutation-of-heisenberg-irreps} is a consequence of the simple observation that the system eigenvalues under $[0, \mu, 0]^\rmJ$ ($\mu \in \ZZ$) of eigenvectors in $V_{M, \alpha', \beta'}$ are pairwise different.
\end{proof}

% ({\tbf this is not right yet})
% \begin{proposition}
% We consider subrepresentations of $\rho$.  Let $\frake_r$ and ${\td \frake_r}$ ($r \in M (\ZZ^N + \ZZ^N) \slashdiv \ZZ^N$ be bases of $\pi = \pi_{M, \alpha, \beta}$ and ${\td \pi} = \gamma \pi_{M, \alpha, \beta}$ as defined in~(\ref{eq:pi-Mab-base-space}).  Then we have
% \begin{gather*}
%   \gamma\, \frake_r
% =
%   \sum_{\mu \in \ZZ^N \slashdiv (M^{-1} \ZZ^N \cap \ZZ^N)} e(- c d M[\mu] + (d - 1) r^\T \mu )\, {\td \frake}_{r + c A^{-1} \mu}
% \text{.}
% \end{gather*}
% ({\tbf check this})
% \end{proposition}
% \begin{proof}
% It suffices to check eigenvalues under $[0, \mu, 0]^\rmJ$ and $[\lambda, 0, 0]^\rmJ$ to verify the lemma.  Suppose that $v$ is an eigenvector for all $[0, \mu, 0]^\rmJ$ ($\mu \in \ZZ^N$).  Then
% \begin{gather*}
%   T^\rmJ [0, \mu, 0]^\rmJ \, v
% =
%   [0, \mu, 0]^\rmJ T^\rmJ \, v
% \quad\text{and}\quad
%   S^\rmJ [0, \mu, 0]^\rmJ \, v
% =
%   [-\mu, 0, 0]^\rmJ S^\rmJ \, v
% \text{.}
% \end{gather*}
% Similarly, for any eigenvector $v$ of all $[\lambda, 0, 0]^\rmJ$ ($\lambda \in \ZZ^N$), we find
% \begin{gather*}
%   T^\rmJ [\lambda, 0, 0]^\rmJ \, v
% =
%   [\lambda, -\lambda, 0]^\rmJ T^\rmJ \, v
% \quad\text{and}\quad
%   S^\rmJ [\lambda, 0, 0]^\rmJ \, v
% =
%   [0, \lambda, 0]^\rmJ S^\rmJ \, v
% \text{.}
% \end{gather*}
% ({\tbf complete the proof})
% \end{proof}

The preceding lemma enables us to define minimal representations $\rho_{M, \alpha, \beta}$ of $\tGamma^{\rmJ(N)}$.  As a representation of $\rmH( \tGamma^{\rmJ(N)} )$, let
\begin{gather*}
  \rho_{M, \alpha, \beta} \big|_{\rmH( \tGamma^{\rmJ(N)} )}
=
  \bigoplus_{(\alpha', \beta') \in (\alpha, \beta) \SL{2}(\ZZ) \slashdiv \sim}
  \pi_{M, \alpha', \beta'}
\text{,}
\end{gather*}
where $(\alpha'_1, \beta'_1) \sim (\alpha'_2, \beta'_2)$ if and only if $(\alpha'_1 - \alpha'_2, \beta'_1 - \beta'_2) \in (M^\ZZ \ZZ^N)^2$.  Note that each $\rmH( \tGamma^{\rmJ(N)} )$-isomorphism class occurs at most once.  In accordance with Lemma~\ref{la:permutation-of-heisenberg-irreps}, we let $\gamma \in \Mp{2}(\ZZ)$ permute components $\pi_{M, \alpha', \beta'}$ by the right action of $\rT \gamma$ on $(\alpha', \beta')$.  By the same lemma, we can define the image of $v \in V_{M, \alpha, \beta}$ under $\gamma$ by the commutation relations in $\tGamma^{\rmJ(N)}$:
\begin{gather*}
  [\lambda, \mu, 0]^\rmJ \gamma\, v
=
  \gamma [a \lambda + c \mu, b \lambda + d \mu, 0]^\rmJ\, v
\text{.}
\end{gather*}

\begin{proposition}
\label{prop:tensor-decomposition-for-tGammaJ-representations}
Let $\rho$ be an irreducible representation.  Suppose that $\pi_{M, \alpha, \beta}$ occurs in $\rho |_{\rmH( \tGamma^{\rmJ(N)} )}$.  Then $\rho \cong \rho' \otimes \rho_{M, \alpha, \beta}$ for a unitary representation $\rho'$ of $\Mp{2}(\ZZ)$.
\end{proposition}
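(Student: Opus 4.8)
The plan is to exhibit an explicit $\tGamma^{\rmJ(N)}$-equivariant isomorphism between $\rho$ and $\rho' \otimes \rho_{M, \alpha, \beta}$, where $\rho'$ is constructed as a multiplicity space. First I would use Lemma~\ref{la:permutation-of-heisenberg-irreps} and its corollary: since $\pi_{M,\alpha,\beta}$ occurs in $\rho|_{\rmH(\tGamma^{\rmJ(N)})}$, every $\Mp{2}(\ZZ)$-conjugate $\pi_{M,\alpha',\beta'}$ with $(\alpha',\beta') \in (\alpha,\beta)\SL{2}(\ZZ)$ also occurs, and the isotypical components are permuted by $\Mp{2}(\ZZ)$ exactly as prescribed in the definition of $\rho_{M,\alpha,\beta}$. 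Thus $\rho|_{\rmH(\tGamma^{\rmJ(N)})}$ is a direct sum of copies of the Heisenberg-like representation underlying $\rho_{M,\alpha,\beta}|_{\rmH(\tGamma^{\rmJ(N)})}$; let $W$ denote the multiplicity space, so that as $\rmH(\tGamma^{\rmJ(N)})$-modules $\rho \cong W \otimes \big(\rho_{M,\alpha,\beta}|_{\rmH(\tGamma^{\rmJ(N)})}\big)$, where $\rmH(\tGamma^{\rmJ(N)})$ acts only on the second factor. Concretely, $W$ can be identified with the $\pi_{M,\alpha,\beta}$-isotypical component's $\Hom$-space, equivalently with a single weight space (say the $\frake_0$-lines across the various components) inside $\rho$.

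The next step is to transport the $\Mp{2}(\ZZ)$-action through this decomposition. Fix a set of representatives $v$ for the chosen weight lines, so that $\{[\lambda,\mu,0]^\rmJ v\}$ spans $\rho$. For $\gamma \in \Mp{2}(\ZZ)$, the element $\rho(\gamma)$ maps the $\pi_{M,\alpha',\beta'}$-isotypical component to the $\pi_{M,\alpha'',\beta''}$-component with $(\alpha'',\beta'') = (\alpha',\beta')\rT\gamma$, by Lemma~\ref{la:permutation-of-heisenberg-irreps}. Because the relations in $\tGamma^{\rmJ(N)}$ force the action of $\gamma$ on $\frake_r$ in $\rho_{M,\alpha,\beta}$ (this is the "uniquely determined by relations" clause, together with the commutation formula $[\lambda,\mu,0]^\rmJ \gamma\, v = \gamma [a\lambda+c\mu, b\lambda+d\mu,0]^\rmJ\, v$), one checks that $\rho(\gamma)$ differs from $\id_W \otimes \rho_{M,\alpha,\beta}(\gamma)$ only by an operator acting on $W$ alone. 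Define $\rho'(\gamma) \in \GL(W)$ to be that operator; the cocycle property of $\rho$ and the already-known cocycle property of $\rho_{M,\alpha,\beta}$ give that $\rho'$ is a genuine representation of $\Mp{2}(\ZZ)$ (one must verify it is well-defined and closes up on composition — the centre of $\Mp{2}(\ZZ)$, i.e.\ the image of $(-I_2,\omega)$, acts compatibly because both $\rho$ and $\rho_{M,\alpha,\beta}$ have the same behaviour there, forced by the central character and the weight data). Unitarity of $\rho'$ follows from unitarity of $\rho$ and $\rho_{M,\alpha,\beta}$ by restricting an invariant inner product to $W$.

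Finally I would assemble the map $\rho \to \rho' \otimes \rho_{M,\alpha,\beta}$, $w \otimes \frake_r \mapsto w \otimes \frake_r$ on the identified weight data, and verify it intertwines both the $\rmH(\tGamma^{\rmJ(N)})$-action (automatic from the isotypical decomposition and Schur's lemma, since each $\rmH(\tGamma^{\rmJ(N)})$-isomorphism class occurs at most once in $\rho_{M,\alpha,\beta}$, and at most with the multiplicity recorded by $W$ in $\rho$) and the $\Mp{2}(\ZZ)$-action (by the construction of $\rho'$ in the previous step). Since $\Mp{2}(\ZZ)$ and $\rmH(\tGamma^{\rmJ(N)})$ generate $\tGamma^{\rmJ(N)}$, this gives the desired $\tGamma^{\rmJ(N)}$-isomorphism.

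I expect the main obstacle to be the bookkeeping in the second step: showing that $\rho(\gamma)$ really factors as $\rho'(\gamma) \otimes \rho_{M,\alpha,\beta}(\gamma)$ rather than mixing the $W$-factor with the Heisenberg factor in a $\gamma$-dependent way. The crucial input is that the action of $\gamma$ on the $\frake_r$ inside $\rho_{M,\alpha,\beta}$ is \emph{forced} by the $\tGamma^{\rmJ(N)}$-relations (last clause of Lemma~\ref{la:permutation-of-heisenberg-irreps}), so the "Heisenberg part" of $\rho(\gamma)$ has no freedom; the residual freedom lives entirely on $W$, and one pins it down by comparing the action on a single reference vector $\frake_0$ in each component. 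Checking that the resulting $\rho'$ satisfies the relation $S^2 = (ST)^3$ up to the correct scalar — i.e.\ that the normalization by $\rho_{M,\alpha,\beta}$ leaves a consistent $\Mp{2}(\ZZ)$-cocycle — is where one has to be careful, but it reduces to the compatibility of central characters and weights that is already built into the setup.
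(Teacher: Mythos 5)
Your proposal follows essentially the same route as the paper: decompose $\rho|_{\rmH(\tGamma^{\rmJ(N)})}$ into isotypical components, use Lemma~\ref{la:permutation-of-heisenberg-irreps} to identify this restriction with $W \otimes \big(\rho_{M,\alpha,\beta}|_{\rmH(\tGamma^{\rmJ(N)})}\big)$ for a multiplicity space $W$ on which $\rmH(\tGamma^{\rmJ(N)})$ acts trivially, and then observe that $\rho(\gamma)\big(\id_W\otimes\rho_{M,\alpha,\beta}(\gamma)\big)^{-1}$ commutes with the Heisenberg action and hence defines $\rho'(\gamma)$ on $W$. The paper's own proof is exactly this (if anything, terser on the point you flag as the main obstacle), so your argument is correct and matches it.
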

\begin{proof}
First consider the $\rmH( \tGamma^{\rmJ(N)} )$-isotypical decomposition of $\rho$.
\begin{gather*}
  \rho \big|_{\rmH( \tGamma^{\rmJ(N)} )}
\cong
  \sum_{(\alpha', \beta')} {\td \pi}_{M, \alpha', \beta'}
\text{.}
\end{gather*}
Because $\rho$ is irreducible and $\pi_{M, \alpha, \beta}$ occurs in $\rho |_{\rmH( \tGamma^{\rmJ(N)} )}$, Lemma~\ref{la:permutation-of-heisenberg-irreps} implies that $(\alpha', \beta')$ runs through $(\alpha, \beta) \SL{2}(\ZZ) \slashdiv \sim$.  Since, for the same reason, $\SL{2}(\ZZ)$ permutes the isotypical components transitively, we find that
\begin{gather*}
  \rho \big|_{\rmH( \tGamma^{\rmJ(N)} )}
\cong
  \pi_{\mathbbm{1}} \otimes \rho_{M, \alpha, \beta}
\text{,}
\end{gather*}
where $\pi_{\mathbbm{1}}$ a multiple of the trivial representation of $\rmH( \tGamma^{\rmJ(N)} )$.  By construction of $\rho_{M, \alpha, \beta}$, we have
\begin{gather*}
  \rho(\gamma) \, \big( \pi_{\mathbbm{1}} \otimes \rho_{M, \alpha, \beta}^{-1}(\gamma) \big) \;
  \rho\big( [\lambda, \mu, \kappa]^\rmJ \big)
=
  \rho\big( [\lambda, \mu, \kappa]^\rmJ \big) \;
  \rho(\gamma) \, \big( \pi_{\mathbbm{1}} \otimes \rho_{M, \alpha, \beta}^{-1} (\gamma) \big)
\end{gather*}
for all $\lambda, \mu \in \ZZ^N$, and $\kappa \in \Mat{N}(\ZZ)$.  Consequently, $\rho \cong \rho' \otimes \rho_{M, \alpha, \beta}$ is a tensor product.
\end{proof}

\subsection{Theta functions}

In this section, we set up the basic tool, the theta functions, to deduce the theta decomposition for Jacobi forms of arbitrary index.  Theta functions for indices in $\MatT{N}(\QQ)$ come with nontrivial representations of $\rmH( \tGamma^{\rmJ(N)} )$ attached to them.

We denote Fourier transforms by $\cF\cT$.  Recall that the Fourier transform of a function $f :\, \RR^N \rightarrow \CC$, by definition, is
\begin{gather*}
  \cF\cT(f)(\xi)
=
  \int_{-\infty}^\infty f(\xi')\, \exp(2 \pi i\, \rT \xi \xi') \;d\xi'
\text{.}
\end{gather*}

\begin{lemma}
\label{la:general-fourier-transform}
Let $K$ be a symmetric, positive definite matrix.  The Fourier transforms with respect to~$\xi'$ of
\begin{gather*}
  \exp\big(- \pi K[ \xi' + h ] \big)
\end{gather*}
equals
\begin{gather*}
  (\det K)^{-\frac{1}{2}} \exp(2 \pi i\, \rT h \xi)
  \exp\big( - \pi K^{-1}[\xi] \big)
\text{.}
\end{gather*}
\end{lemma}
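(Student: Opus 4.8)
The plan is to peel off the three features of the integrand one at a time: the translation by $h$, the off-diagonal part of $K$, and finally the scalar Gaussian. First I would remove the shift: in the defining integral
\begin{gather*}
  \cF\cT\big( \exp(-\pi K[\,\cdot\, + h]) \big)(\xi)
=
  \int_{\RR^N} \exp\big(-\pi K[\xi' + h]\big)\, \exp(2 \pi i\, \rT \xi \xi')\; d\xi'
\end{gather*}
the substitution $\xi' \mapsto \xi' - h$ (Jacobian $1$) turns $K[\xi'+h]$ into $K[\xi']$ and $\exp(2\pi i\,\rT\xi\xi')$ into $\exp(2\pi i\,\rT\xi\xi')$ times a linear phase factor in $h$ and $\xi$, reducing the claim to the case $h = 0$, i.e. to evaluating $\cF\cT(\exp(-\pi K[\,\cdot\,]))(\xi)$.

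Next I would diagonalize. Writing $K = \rT O D O$ with $O \in \Orth{N}(\RR)$ and $D = \diag(d_1, \dots, d_N)$, $d_i > 0$, the substitution $\xi' \mapsto \rT O \xi'$ again has Jacobian $1$ and sends $K[\xi']$ to $D[\xi']$ and $\rT\xi\xi'$ to $\rT(O\xi)\xi'$; hence the $N$-dimensional transform factors as the product over $i$ of the one-dimensional transforms $\int_\RR \exp(-\pi d_i t^2)\exp(2\pi i\, s_i t)\, dt$, evaluated at $s = O\xi$. For each such scalar integral I would complete the square, $d_i t^2 - 2 i s_i t = d_i (t - i s_i / d_i)^2 + s_i^2 / d_i$, shift the contour of integration back to the real axis, and invoke $\int_\RR \exp(-\pi d_i t^2)\, dt = d_i^{-1/2}$, obtaining $d_i^{-1/2}\exp(-\pi s_i^2 / d_i)$. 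Taking the product gives $(\det D)^{-1/2}\exp(-\pi D^{-1}[O\xi])$; since $\det D = \det K$ and $D^{-1}[O\xi] = \rT\xi\,\rT O D^{-1} O\,\xi = K^{-1}[\xi]$, reversing the diagonalization yields $(\det K)^{-1/2}\exp(-\pi K^{-1}[\xi])$, and reinstating the phase factor from the first step completes the argument.

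The only step that is not purely formal is the contour shift in the scalar Gaussian integral; this is justified because the integrand is entire and decays uniformly on horizontal strips, so Cauchy's theorem applies. If one prefers to avoid complex analysis entirely, the same one-dimensional identity can be obtained by observing that $u(s) = \int_\RR \exp(-\pi d t^2 + 2\pi i s t)\, dt$ satisfies $u'(s) = -(2\pi s/d)\, u(s)$ — differentiate under the integral sign and integrate by parts using $\tfrac{d}{dt}\exp(-\pi d t^2) = -2\pi d t \exp(-\pi d t^2)$ — together with the initial value $u(0) = d^{-1/2}$, which forces $u(s) = d^{-1/2}\exp(-\pi s^2/d)$. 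Everything else is bookkeeping with the orthogonal change of variables.
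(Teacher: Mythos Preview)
Your proof is correct and follows essentially the same route as the paper: strip off the shift by~$h$ to get a phase factor, then make a linear change of variables to reduce to the standard Gaussian self-transform. The only cosmetic difference is that the paper applies the scaling rule with $\sqrt{K}$ and quotes the $N$-dimensional identity $\cF\cT(\exp(-\pi\,\rT\xi'\xi')) = \exp(-\pi\,\rT\xi\xi)$ as known, whereas you diagonalize orthogonally and work out the one-dimensional Gaussian integral explicitly (via contour shift or the ODE argument); both reductions are equivalent and your extra justification of the scalar case is a welcome addition rather than a departure.
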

\begin{proof}
The following equations are standard.
\begin{alignat*}{2}
  \cF\cT(f(\xi' + h))
&=
  \exp(2 \pi i \rT h \xi)\, \cF\cT(f(\xi'))(\xi)
&&\qquad
  \big(h \in \RR^N \big)
\text{,}
\\[2pt]
  \cF\cT(f(K \xi'))
&=
  \det K\cdot \cF\cT(f(\xi'))(\xi) \big|_{\xi \mapsto K^{-1}\xi}
&&\qquad
  \big(K \in \GL{N}(\RR) \big)
\text{,}
\\[2pt]
  \cF\cT(\exp(-\pi\, \rT \xi' \xi') )
&=
  \exp(-\pi \rT \xi \xi)
\text{.}
\end{alignat*}
By these rules, we have
\begin{align*}
  \cF\cT\big( \exp\big(- \pi K[ \xi + h ] \big) \big)
&=
  \exp(2 \pi i\, \rT h \xi)\,
  \cF\cT\big( \exp(-\pi K[\xi] ) \big)
\\
&=
  (\det K)^{-\frac{1}{2}}
  \exp(2 \pi i\, \rT h \xi)\,
  \cF\cT\big( \exp(- \pi\, \rT \xi \xi ) \big) \big|_{\xi \mapsto \sqrt{K}^{-1} \xi}
\\
&=
  (\det K)^{-\frac{1}{2}}
  \exp(2 \pi i\, \rT h \xi)\,
  \exp( -\pi K^{-1}[\xi] )
\text{.}
\end{align*}
\end{proof}

\begin{corollary}
\label{cor:theta-fourier-transform}
Given $\nu \in \QQ^N$, the Fourier transform of
\begin{gather*}
  \exp\big( \pi i \tau M^{-1} [M_\ZZ \xi + \nu]
            + 2 \pi i\, \rT z (M_\ZZ \xi + \nu) \big)
\end{gather*}
equals
\begin{multline*}
  \sqrt{\det M^{-1}[M_\ZZ]}^{-1}
  \exp\big(2 \pi i\, \xi M_\ZZ^{-1} \nu \big) \;
\\[2pt]
  \big( \frac{i}{\tau} \big)^{\frac{N}{2}}
  \exp\Big( \pi i\, \frac{-M[z]}{\tau} \Big) \;
  \exp\Big( \pi i\, \frac{-1}{\tau} M^{-1}[M \rT M_\ZZ^{-1} \xi] \Big)
  \exp\Big( 2 \pi i\, \rT(M \rT M_\ZZ^{-1} \xi) \frac{z}{\tau} \Big)
\text{.}
\end{multline*}
\end{corollary}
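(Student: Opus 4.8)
The plan is to complete the square in the exponent, so that the displayed function becomes a translated Gaussian to which Lemma~\ref{la:general-fourier-transform} applies, and then to collect the resulting factors and match them with the claimed expression.

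First I would record that $M_\ZZ$ is invertible over $\RR$: writing a Smith normal form $M = UDV$ with $U, V \in \GL{N}(\ZZ)$, positive definiteness of $M$ forces the diagonal entries of $D$ to be positive, so $M_\ZZ = U D_\ZZ V \in \GL{N}(\RR) \cap \Mat{N}(\ZZ)$. Then, with $\xi$ the integration variable and using $M^\rmt = M$, a completion of the square rewrites the exponent $\pi i \tau\, M^{-1}[M_\ZZ \xi + \nu] + 2\pi i\, \rT z (M_\ZZ \xi + \nu)$ as $-\tfrac{\pi i}{\tau} M[z] + \pi i \tau\, M^{-1}[ M_\ZZ \xi + \nu + \tfrac{1}{\tau} M z ]$. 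Factoring $M_\ZZ$ out of the last bracket (with the convention $A[B] = \rT B A B$) gives $M^{-1}[M_\ZZ \xi + \nu + \tfrac1\tau M z] = \big( M^{-1}[M_\ZZ] \big)[\xi + h]$, where $h = M_\ZZ^{-1}\nu + \tfrac1\tau M_\ZZ^{-1} M z$, so the displayed function equals $\exp(-\tfrac{\pi i}{\tau} M[z]) \cdot \exp\!\big( -\pi\,( -i\tau\, M^{-1}[M_\ZZ] )[\xi + h] \big)$.

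Next I would invoke Lemma~\ref{la:general-fourier-transform} with $K = -i\tau\, M^{-1}[M_\ZZ]$ and translation vector $h$. Here $M^{-1}[M_\ZZ] = \rT M_\ZZ\, M^{-1} M_\ZZ$ is real, symmetric and positive definite; since $\tau \in \HS$, the matrix $K$ is complex symmetric with positive definite real part, and $h$ is a complex vector. The identity of Lemma~\ref{la:general-fourier-transform} extends, by analytic continuation in $\tau$ and in the translation vector, to such $K$ and $h$, with $(\det K)^{-1/2}$ and the square roots below understood as the branches that are positive for $\tau = it$, $t > 0$. This produces $\exp(-\tfrac{\pi i}{\tau} M[z]) \cdot (\det K)^{-1/2}\, \exp(2\pi i\, \rT h \xi)\, \exp(-\pi\, K^{-1}[\xi])$.

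Finally I would identify the factors with the claimed ones. From $\det K = (-i\tau)^N \det M^{-1}[M_\ZZ]$ and $(-i\tau)^{-1} = i/\tau$ one gets $(\det K)^{-1/2} = \sqrt{\det M^{-1}[M_\ZZ]}^{-1}\,(i/\tau)^{N/2}$, the branches matching by the normalization at $\tau = it$. Since $K^{-1} = \tfrac{i}{\tau} \big( M^{-1}[M_\ZZ] \big)^{-1}$ and $\big( M^{-1}[M_\ZZ] \big)^{-1} = M_\ZZ^{-1} M\, \rT M_\ZZ^{-1}$ as symmetric matrices, whose associated quadratic form is $\xi \mapsto M^{-1}[M\,\rT M_\ZZ^{-1}\xi]$, the last exponential becomes $\exp\!\big( \pi i\,\tfrac{-1}{\tau}\, M^{-1}[M\,\rT M_\ZZ^{-1}\xi] \big)$. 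Splitting $h = M_\ZZ^{-1}\nu + \tfrac1\tau M_\ZZ^{-1} M z$ and using $M^\rmt = M$ together with $\rT v A w = \rT w \rT A v$ for scalars, the factor $\exp(2\pi i\, \rT h\xi)$ splits as $\exp(2\pi i\, \xi M_\ZZ^{-1}\nu)\,\exp\!\big(2\pi i\, \rT(M\,\rT M_\ZZ^{-1}\xi)\tfrac z\tau\big)$, and assembling everything reproduces the asserted formula. The one step that demands genuine care is the passage from the real positive definite hypothesis of Lemma~\ref{la:general-fourier-transform} to the complex matrix $K = -i\tau\, M^{-1}[M_\ZZ]$ and the consistent choice of the branch of $(\det K)^{-1/2}$, equivalently of $(i/\tau)^{N/2}$; everything else is routine tracking of transposes — recalling that $M_\ZZ$ need not be symmetric — together with the elementary matrix identities just used.
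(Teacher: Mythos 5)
Your proof is correct and follows essentially the same route as the paper: complete the square in the exponent and reduce to the Gaussian Fourier transform of Lemma~\ref{la:general-fourier-transform}. The only difference is where the analytic continuation is performed --- the paper first specializes to $\tau = iy$, $z = iv$ so that $K = y\,M^{-1}[M_\ZZ]$ and $h = M_\ZZ^{-1}(\nu + \tfrac{1}{y}Mv)$ are real and the lemma applies verbatim, and then extends the resulting identity by holomorphy in $(\tau, z)$, whereas you continue the Gaussian identity itself to complex symmetric $K$ with positive definite real part; both are standard and demand the same care with the branch of $(\det K)^{-1/2}$.
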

\begin{proof}
For $\tau = i y$ and $z = i v$ the result follows Lemma~\ref{la:general-fourier-transform} with $K = y M^{-1}[M_\ZZ]$ and $h = M_\ZZ^{-1} (\nu + \frac{1}{y} M v)$.  A straightforward calculation gives
\begin{multline*}
  \sqrt{\det M^{-1}[M_\ZZ]}^{-1}
  \exp\big(2 \pi i\, \xi M_\ZZ^{-1} \nu \big) \;
\\[2pt]
  y ^{-\frac{N}{2}}
  \exp\Big( \pi \frac{M[v]}{y} \Big) \;
  \exp\Big( - \pi y^{-1} M[\rT M_\ZZ^{-1} \xi] \Big)
  \exp\Big( 2 \pi i\, \rT(M \rT M_\ZZ^{-1} \xi) \frac{v}{y} \Big)
\text{.}
\end{multline*}
Since the first and second expression in the corollary's statement represent holomorphic functions, this proves the corollary.
\end{proof}

\begin{lemma}
\label{la:theta-series-MMZZ-relation}
We have
\begin{gather*}
  M \rT M_\ZZ^{-1} \ZZ^N
=
  M \ZZ^N + \ZZ^N
\text{.}
\end{gather*}
\end{lemma}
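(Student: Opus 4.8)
The plan is to evaluate the lattice $M \rT M_\ZZ^{-1} \ZZ^N$ directly from the fixed Smith normal form $M = U D V$, using the symmetry of $M$ to force a cancellation. With $M_\ZZ = U D_\ZZ V$ as in~\eqref{eq:MZZ-definition} and $\rT D_\ZZ = D_\ZZ$ (a diagonal matrix), we get $\rT M_\ZZ = \rT V D_\ZZ \rT U$, hence $\rT M_\ZZ^{-1} = (\rT U)^{-1} D_\ZZ^{-1} (\rT V)^{-1}$. Since $(\rT V)^{-1} \in \GL{N}(\ZZ)$ fixes $\ZZ^N$, this leaves
\begin{gather*}
  M \rT M_\ZZ^{-1} \ZZ^N
=
  U D V \, (\rT U)^{-1} D_\ZZ^{-1} \ZZ^N
\text{.}
\end{gather*}

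Next I would invoke $M = \rT M$. Taking the transpose of $M = U D V$ and recalling $\rT D = D$ yields the second factorization $M = \rT V D \rT U$. Substituting it for $U D V$ in the line above, the factor $\rT U$ cancels against $(\rT U)^{-1}$, and with $D D_\ZZ^{-1} = D^\ZZ$ (from $D = D_\ZZ D^\ZZ$) we obtain
\begin{gather*}
  M \rT M_\ZZ^{-1} \ZZ^N
=
  \rT V D \, D_\ZZ^{-1} \ZZ^N
=
  \rT V D^\ZZ \ZZ^N
\text{.}
\end{gather*}
To finish, I would use the entrywise identity $\tfrac{p}{q}\ZZ + \ZZ = \tfrac{1}{q}\ZZ$ for coprime $p, q$ — exactly as in the proof of Lemma~\ref{la:integral-M-and-its-inverse} — to rewrite $D^\ZZ \ZZ^N = D \ZZ^N + \ZZ^N$, and then $\rT V ( D \ZZ^N + \ZZ^N ) = \rT V D \rT U \ZZ^N + \ZZ^N = M \ZZ^N + \ZZ^N$, again because $\rT U, \rT V \in \GL{N}(\ZZ)$ and $M = \rT V D \rT U$. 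This is the asserted identity.

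The computation is otherwise routine; the single step that genuinely matters — and that a casual reading might skip — is the appeal to symmetry in passing from $M = U D V$ to $M = \rT V D \rT U$. Without $M = \rT M$ the matrices $M_\ZZ$ and $\rT M_\ZZ$ need not span the same lattice, the cancellation of $\rT U$ breaks down, and this is precisely why the identity requires $M$ symmetric. (Equivalently, and more invariantly, one can note that $\rT M_\ZZ^{-1} \ZZ^N$ is the dual lattice of $M_\ZZ \ZZ^N = M \ZZ^N \cap \ZZ^N$, hence equals $\rT M^{-1}\ZZ^N + \ZZ^N = M^{-1}\ZZ^N + \ZZ^N$ by symmetry, whence $M \rT M_\ZZ^{-1} \ZZ^N = \ZZ^N + M\ZZ^N$; the direct calculation above, though, fits the style of the surrounding material.)
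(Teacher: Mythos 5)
Your proof is correct and is essentially the paper's own argument: a direct computation from the Smith normal form $M = UDV$ in which the symmetry of $M$ (via the second factorization $M = \rT V D \rT U$) produces the cancellation of $\rT U$, followed by the entrywise identity $D^\ZZ\ZZ^N = D\ZZ^N + \ZZ^N$. The only cosmetic difference is that the paper first reduces to $V = I$ using the covariance $M'(\rT M'_\ZZ)^{-1} = \rT g\, M \rT M_\ZZ^{-1} \rT g^{-1}$ for $M' = M[g]$, whereas you carry $V$ through the computation; your parenthetical dual-lattice argument is a nice invariant alternative but is not the route the paper takes.
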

\begin{proof}
We proceed as in the proof of Lemma~\ref{la:integral-M-and-its-inverse}.  Recall that we have $M = U D V$ and $M_\ZZ = U D_\ZZ V$.  In order to simplify computations, we use the fact that if $M' = M[g]$ for $g \in \GL{N}(\ZZ)$, then $M' (\rT M'_\ZZ)^{-1} = \rT g M \rT M_\ZZ^{-1} \rT g^{-1}$.  That is, we may assume that $V$ is the identity matrix.  Because $M$ is symmetric, we have $U D = D \rT U$.  Now, the following computation proves the lemma.
\begin{gather*}
  M \rT M_\ZZ^{-1} \ZZ^N
=
  U D \rT U^{-1} D_\ZZ^{-1} \ZZ^N
=
  D D_\ZZ^{-1} \ZZ^N
=
  M \ZZ^N + \ZZ^N
\text{.}
\end{gather*}
\end{proof}

We call $\disc\, M = (M \ZZ^N + \ZZ^N) \slashdiv (M \ZZ^N \cap \ZZ^N)$ the (generalized) discriminant form associated to $M$.  Note that is not a discriminant form in the sense of~\cite{Ni79}, since the associated quadratic form $q_M(\lambda) = M^{-1}[\lambda]$ does not necessarily take integral values on \mbox{$M \ZZ^N \cap \ZZ^N$}.  Let $\CC[\disc\,M]$ be the group algebra of $\disc\, M$, which has basis $\frake_\nu$ ($\nu \in \disc\,M$).

For $\nu \in \disc\, M$, define component functions
\begin{gather}
  \theta_{M, \nu} (\tau, z)
=
  \sum_{\xi \equiv \nu \pmod{M \ZZ^N \cap \ZZ^N}}
  q^{\frac{1}{2} M^{-1}[\xi]} \zeta^\xi
\text{,}
\end{gather}
where $\xi$ runs through $M \ZZ^N \cap \ZZ^N$.  We find that
\begin{gather*}
  \theta_M (\tau, z)
=
  \sum_{\nu \in (M \ZZ^N + \ZZ^N) \slashdiv (M \ZZ^N \cap \ZZ^N)} \theta_{M, \nu}(\tau, z) \, \frake_\nu
\text{,}
\end{gather*}
as a function $\HS^{\rmJ(N)} \rightarrow \CC[\disc\, M]$, transforms as a vector valued modular form under~$\Mp{2}(\ZZ)$.

Let $\rho_M$ be the representation of $\tGamma^{\rmJ(N)}$ on $\GL{}(\CC[\disc\, M])$, whose central character is $e(\frac{1}{2} \tr(\cdot M))$ and which is defined as
\begin{align*}
  \rho_M( S )\, \frake_\nu
&=
  \sqrt{\# \disc\, M}^{-1} \sum_{\nu' \in \disc\, M}
                 \exp\big(2 \pi i\, \rT \nu M^{-1} \nu' \big)\, \frake_{\nu'}
\text{,}
\\
  \rho_M( T )\, \frake_\nu
&=
  \exp\big( \pi i M^{-1} [\nu] \big)\, \frake_\nu
\text{,}
\\
  \rho_M ( [\lambda, 0, 0]^\rmJ)\, \frake_\nu
&=
  \frake_{\nu + M \lambda}
\text{,}
\quad\text{and}
\\
  \rho_M ( [0, \mu, 0]^\rmJ )\,\frake_\nu
&=
  \exp(2 \pi i \rT \nu \mu )\, \frake_\nu
\text{,}
\end{align*}
where we identify $S$ and $T$ with the corresponding elements in $\tGamma^{\rmJ(N)}$.

\begin{theorem}
\label{thm:theta-functions-for-M}
For any positive definite $M \in \MatT{N}(\QQ)$, the theta function $\theta_M$ is a vector valued Jacobi form of weight~$\frac{N}{2}$, index~$M$, and type $\rho_M$. 
\end{theorem}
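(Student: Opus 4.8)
The plan is to verify the two defining conditions of Definition~\ref{def:jacobi-forms} for $\theta_M$ with $k = \frac{N}{2}$ and type $\rho_M$. The boundedness condition~(ii) is easy and should be disposed of first: along any line $z = \alpha\tau + \beta$ with $\alpha,\beta \in \QQ^N$, each component $\theta_{M,\nu}(\tau,\alpha\tau+\beta)$ is a sum over $\xi \in M\ZZ^N\cap\ZZ^N$ of terms $q^{\frac12 M^{-1}[\xi]}\,e(\rT\xi(\alpha\tau+\beta))$, whose absolute value is $\exp\!\big(-2\pi v\,(\tfrac12 M^{-1}[\xi] + \rT\xi\alpha)\big)$; since $M^{-1}$ is positive definite this is a convergent theta series whose constant term (in the appropriate completion of the square) dominates as $v\to\infty$, so $\|\theta_M(\tau,\alpha\tau+\beta)\| = O(1)$. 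Absolute, locally uniform convergence of the series also gives holomorphy of $\theta_M$ on $\HS^{\rmJ(N)}$.

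The substance is condition~(i): invariance under all of $\tGamma^{\rmJ(N)}$. Since $\tGamma^{\rmJ(N)}$ is generated by $\rmH(\tGamma^{\rmJ(N)})$ together with $S$ and $T$, it suffices to check $\theta_M\big|_{\frac{N}{2},M,\rho_M}\,g = \theta_M$ for $g \in \{T, S, [\lambda,0,0]^\rmJ, [0,\mu,0]^\rmJ, [0,0,\kappa]^\rmJ\}$. The central element $[0,0,\kappa]^\rmJ$ is handled by definition: $\alpha_M$ contributes $e(-\tfrac12\tr(M\kappa))^{-1}$ and $\rho_M$ contributes the matching central character, so they cancel. For $T$: the geometric action fixes $z$ and sends $\tau\mapsto\tau+1$, which multiplies the $\xi$-term of $\theta_{M,\nu}$ by $e(\tfrac12 M^{-1}[\xi])$; one then checks $M^{-1}[\xi] \equiv M^{-1}[\nu] \pmod{2\ZZ}$ for $\xi\equiv\nu \pmod{M\ZZ^N\cap\ZZ^N}$ — this is exactly Lemma~\ref{la:integral-evaluation-of-M} applied after expanding $M^{-1}[\nu + \eta]$ with $\eta\in M\ZZ^N\cap\ZZ^N = M_\ZZ\ZZ^N$ (using Lemma~\ref{la:integral-M-and-its-inverse}) — so the phase is $\exp(\pi i M^{-1}[\nu])$, matching $\rho_M(T)$. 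For $[\lambda,0,0]^\rmJ$ and $[0,\mu,0]^\rmJ$: substitute the geometric action $z\mapsto z+\lambda\tau+\mu$ into $\theta_M$, and reindex the summation $\xi\mapsto\xi - M_\ZZ\lambda$ (note $M_\ZZ\lambda \in M\ZZ^N\cap\ZZ^N$); the factor $\alpha_M([\lambda,\mu,0]^\rmJ;\tau,z)^{-1}$ is precisely the completion-of-the-square discrepancy $q^{-\frac12 M[\lambda]}\zeta^{-M\lambda}$ and friends, and the leftover permutation of the $\frake_\nu$ by $\nu\mapsto\nu+M\lambda$ (resp. the scalar $e(\rT\nu\mu)$) matches $\rho_M([\lambda,0,0]^\rmJ)$ (resp. $\rho_M([0,\mu,0]^\rmJ)$) — here one uses Lemma~\ref{la:theta-series-MMZZ-relation} to see that $M\lambda$ acting on $\disc M$ is the right bookkeeping and that $\nu$ runs over $(M\ZZ^N+\ZZ^N)\slashdiv(M\ZZ^N\cap\ZZ^N)$ consistently.

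The main obstacle is the $S$-transformation, and I would prove it by Poisson summation. Writing $\theta_{M,\nu}(\tau,z)$ as $\sum_{\xi\in M_\ZZ\ZZ^N}\exp\!\big(\pi i\tau M^{-1}[\xi+\nu] + 2\pi i\,\rT z(\xi+\nu)\big)$ and substituting $\tau\mapsto -1/\tau$, $z\mapsto z/\tau$, I apply Poisson summation over the lattice $M_\ZZ\ZZ^N$; Corollary~\ref{cor:theta-fourier-transform} gives exactly the Fourier transform needed, producing the automorphy factor $(\tau/i)^{N/2}$ (i.e. the weight $\frac{N}{2}$), the $z$-dependent factor $\exp(\pi i\,\tfrac{-M[z]}{\tau})$ which is the $\alpha_M(S;\tau,z)$ contribution, a normalization $\sqrt{\det M^{-1}[M_\ZZ]}^{-1} = \sqrt{\#\disc M}^{-1}$ (using $\det M^{-1}[M_\ZZ] = \det M_\ZZ^2/\det M = [\,M_\ZZ\ZZ^N : M_\ZZ\ZZ^N\cap(\text{something})\,]$, i.e. that $\#\disc M$ equals this index), and the dual sum indexed by $M\rT M_\ZZ^{-1}\xi \in M\ZZ^N+\ZZ^N$ via Lemma~\ref{la:theta-series-MMZZ-relation}. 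Collecting the dual sum into components recovers $\sum_{\nu'}\exp(2\pi i\,\rT\nu M^{-1}\nu')\,\theta_{M,\nu'}$, which is precisely $\rho_M(S)$ applied to the vector $(\theta_{M,\nu})_\nu$. The delicate points to get right are: (a) that the reindexing $\xi\equiv\nu\pmod{M\ZZ^N\cap\ZZ^N}$ translates correctly into a sum over the full lattice $M_\ZZ\ZZ^N$ shifted by $\nu$, so that Poisson applies cleanly; (b) matching the normalization constant with $\#\disc M$; and (c) tracking the holomorphic branch of $(\tau/i)^{N/2}$ so that it agrees with $\omega(\tau)^{-2k} = (\sqrt\tau)^{-N}$ in the definition of $S \in \tGamma^{\rmJ(N)}$, including the case $N$ odd where half-integral weight genuinely intervenes. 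Once $S$, $T$, and the Heisenberg generators are checked, condition~(i) follows for all of $\tGamma^{\rmJ(N)}$, completing the proof.
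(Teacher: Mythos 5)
Your proposal is correct and follows essentially the same route as the paper: check the transformation law on the generators $T$, $S$, and the Heisenberg elements, with all cases but $S$ being routine, and handle $S$ by Poisson summation over the lattice $M_\ZZ\ZZ^N = M\ZZ^N\cap\ZZ^N$ using Corollary~\ref{cor:theta-fourier-transform} and Lemma~\ref{la:theta-series-MMZZ-relation} to reindex the dual sum over $M\ZZ^N+\ZZ^N$. Your write-up is in fact more explicit than the paper's about the routine generators, the boundedness condition, and the normalization constant.
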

\begin{proof}
We have to prove that
\begin{gather*}
%\label{eq:theta-transformation-S}
  \theta_M \big|_{\frac{N}{2}, M}\, \gamma^\rmJ
=
  \rho_M(\gamma^\rmJ) \theta_M
\end{gather*}
for all $\gamma^\rmJ \in \{ T, S, [\lambda, 0, 0]^\rmJ, [0, \mu, 0]^\rmJ, [0, 0, \kappa]^\rmJ \}$.  All cases but $\gamma^\rmJ = S$ are clear.  In order to check the case~$\gamma^\rmJ = S$, we apply Poisson summation and Corollary~\ref{cor:theta-fourier-transform}.  Since we have $M_\ZZ \ZZ^N = M \ZZ^N \cap \ZZ^N$ by Lemma~\ref{la:integral-M-and-its-inverse}, we find
\begin{align*}
  \theta_{M, \nu}(\tau, z)
&=
  \sum_{\xi \in \ZZ^N}
   \exp\big( \pi i\, \tau M^{-1}[M_\ZZ \xi + \nu] + 2 \pi i\, \rT v (M_\ZZ \xi + \nu) \big)
\\
&=
  \sqrt{\det M^{-1}[M_\ZZ]}^{-1}
  \sum_{\xi' \in \ZZ^N }
  \exp\big(2 \pi i\, \xi' M_\ZZ^{-1} \nu \big) \;
  \big( \frac{i}{\tau} \big)^{\frac{N}{2}}
  \exp\Big( \pi i\, \frac{-M[z]}{\tau} \Big)
\\
&\qquad\quad
  \cdot
  \exp\Big( \pi i\, \frac{-1}{\tau} M^{-1}[M \rT M_\ZZ^{-1} \xi']
            + 2 \pi i\, \rT(M \rT M_\ZZ^{-1} \xi') \frac{z}{\tau} \Big)
\end{align*}
By Lemma~\ref{la:theta-series-MMZZ-relation}, we have $M \rT M_\ZZ^{-1} \ZZ^N = M \ZZ^N + \ZZ^N$.  This allows us to split the sum over $\xi'$ with respect to congruence classes in $(M \ZZ^N + \ZZ^N) \slashdiv (M \ZZ^N \cap \ZZ^N)$.  We then set $\xi = M \rT M_\ZZ^{-1} \xi'$, and obtain the result. 
\end{proof}

By Proposition~\ref{prop:tensor-decomposition-for-tGammaJ-representations}, we have
\begin{gather}
  \rho_M
\cong
  \rho^{\,\prime}_M \otimes \rho_{M, 0, 0}
\end{gather}
for a suitable representation $\rho^{\,\prime}_M$ of $\Mp{2}(\ZZ)$.  We shift the elliptic variable~$z$, to obtain representations $\rho^{\,\prime}_M \otimes \rho_{M, \alpha, \beta}$ for all $\alpha, \beta \in \QQ^N$.  Given $\alpha, \beta \in \QQ^N$ and $\nu \in (M \ZZ^N + \ZZ^N) \slashdiv (M \ZZ^N \cap \ZZ^N)$, define
\begin{gather*}
  \theta_{M, \nu}^{(\alpha, \beta)}
=
  \theta_{M, \nu}(\tau, z) \big|_{k, M} [M^{-1} \alpha, 0, 0]^\rmJ [0, M^{-1} \beta, 0]^\rmJ
\end{gather*}
and
\begin{align}
  \theta_M^{(\alpha, \beta)}
&:\,
  \HS^{\rmJ(N)} \rightarrow \CC[\disc\,M ] \otimes \CC\big[ (\alpha, \beta) \SL{2}(\ZZ) \slashdiv \sim \big]
\\\nonumber
  \theta_{M}^{(\alpha, \beta)}(\tau, z)
&=
  \sum_{\substack{ (\alpha', \beta') \in (\alpha, \beta) \SL{2}(\ZZ) \slashdiv \sim \\
                   \nu \in (M \ZZ^N + \ZZ^N) \slashdiv (M \ZZ^N \cap \ZZ^N) }}
  \theta_{M, \nu}^{(\alpha', \beta')}\, \frake_\nu \otimes \frake_{\alpha', \beta'}
\text{,}
\end{align}
where the sum runs over representatives modulo~$(M^\ZZ \ZZ^N)^2$, and $\frake_{\alpha', \beta'}$ is a basis element of $\CC\big[ (\alpha, \beta) \SL{2}(\ZZ) \slashdiv \sim \big]$.

\begin{proposition}
For any $\alpha, \beta \in \QQ^N$, $\theta_{M}^{(\alpha, \beta)}$ is a vector valued Jacobi form of weight~$\frac{N}{2}$, index~$M$, and type~$\rho^{\,\prime}_M \otimes \rho_{M, \alpha, \beta}$.
\end{proposition}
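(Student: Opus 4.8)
The plan is to deduce the proposition from Theorem~\ref{thm:theta-functions-for-M} and the tensor decomposition $\rho_M \cong \rho^{\,\prime}_M \otimes \rho_{M, 0, 0}$, reusing the analytic input (Poisson summation) rather than redoing it. By construction $\theta_M^{(\alpha, \beta)}$ arises from $\theta_M$ by applying the (scalar-type) slash operators $|_{k, M}$ attached to the rational Heisenberg elements $g_{\alpha', \beta'} := [M^{-1} \alpha', 0, 0]^\rmJ [0, M^{-1} \beta', 0]^\rmJ$ and then bundling the results over the chosen representatives $(\alpha', \beta') \in (\alpha, \beta) \SL{2}(\ZZ) \slashdiv \sim$ with the extra basis vectors $\frake_{\alpha', \beta'}$. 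Holomorphy is immediate. For condition~(ii) of Definition~\ref{def:jacobi-forms} I would observe that each component $\theta_{M, \nu}^{(\alpha', \beta')}$ has a Fourier expansion of the same shape as $\theta_{M, \nu}$ --- a coset sum of terms $q^{\frac{1}{2} M^{-1}[\xi]} \zeta^\xi$ translated by $g_{\alpha', \beta'}$ --- so that $\| \theta_M^{(\alpha, \beta)}(\tau, a \tau + b) \|$ stays bounded for all $a, b \in \QQ^N$. The real content is condition~(i).

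As in the proof of Theorem~\ref{thm:theta-functions-for-M} it suffices to check the transformation law on the generators $[\lambda, 0, 0]^\rmJ$, $[0, \mu, 0]^\rmJ$, $[0, 0, \kappa]^\rmJ$, $T$, $S$ of $\tGamma^{\rmJ(N)}$. For any such generator $\gamma^\rmJ$ I would write $\big( \theta_{M, \nu} |_{k, M} g_{\alpha', \beta'} \big)\big|_{k, M} \gamma^\rmJ = \theta_{M, \nu} |_{k, M} \big( g_{\alpha', \beta'} \gamma^\rmJ \big)$ and push $g_{\alpha', \beta'}$ past $\gamma^\rmJ$: since the rational Heisenberg group is normal in the rational Jacobi group, $g_{\alpha', \beta'} \gamma^\rmJ = \gamma^\rmJ g''$ with $g'' = (\gamma^\rmJ)^{-1} g_{\alpha', \beta'} \gamma^\rmJ$ again a rational Heisenberg element, so that $\theta_{M, \nu} |_{k, M} ( g_{\alpha', \beta'} \gamma^\rmJ ) = ( \theta_{M, \nu} |_{k, M} \gamma^\rmJ ) |_{k, M} g''$ and the first factor is governed by Theorem~\ref{thm:theta-functions-for-M}. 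For the Heisenberg generators $g''$ equals $g_{\alpha', \beta'}$ up to a central element, and the commutator together with that central element produces precisely the phases $e(\rT \alpha' \lambda)$, $e(\rT(\beta' + \cdot)\mu)$ from the definition of $\pi_{M, \alpha', \beta'}$ in Proposition~\ref{prop:irreps-of-heisenberg-part-predefined}. For $\gamma^\rmJ = T$ or $S$, conjugation sends $(\alpha', \beta')$ to $(\alpha', \beta')\rT\gamma$ modulo the lattice $(M^\ZZ \ZZ^N)^2$ defining $\sim$, exactly as in Lemma~\ref{la:permutation-of-heisenberg-irreps}; after absorbing the $\sim$-discrepancy into an integral Heisenberg element (which $\rho_M$ realizes through $\frake_\nu \mapsto \frake_{\nu + M \lambda}$ plus scalar phases) and using Theorem~\ref{thm:theta-functions-for-M} for the residual $T$- or $S$-transformation of $\theta_M$, I expect $\gamma^\rmJ$ to carry the component labelled $(\nu; \alpha', \beta')$ into a linear combination of components labelled $(\nu'; (\alpha', \beta')\rT\gamma)$, with the $\nu$-part of the coefficients given by $\rho_M(\gamma) = (\rho^{\,\prime}_M \otimes \rho_{M, 0, 0})(\gamma)$ and the $(\alpha', \beta')$-part a plain permutation.

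Assembling over all $\nu$ and all orbit representatives, the $\frake_\nu$-index then carries exactly $\rho_M = \rho^{\,\prime}_M \otimes \rho_{M, 0, 0}$, whereas the additional $\frake_{\alpha', \beta'}$-index supplies the transitive permutation of $(\alpha, \beta)\SL{2}(\ZZ)\slashdiv\sim$ together with the $\alpha'$- and $\beta'$-dependent phases; by the construction of $\rho_{M, \alpha, \beta}$ and Proposition~\ref{prop:tensor-decomposition-for-tGammaJ-representations} these two data upgrade the factor $\rho_{M, 0, 0}$ to $\rho_{M, \alpha, \beta}$ while leaving $\rho^{\,\prime}_M$ untouched, which gives $\theta_M^{(\alpha, \beta)} |_{k, M, \rho^{\,\prime}_M \otimes \rho_{M, \alpha, \beta}}\, \gamma^\rmJ = \theta_M^{(\alpha, \beta)}$. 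The hard part will be the phase bookkeeping: one must verify that the central-extension and commutator contributions of $\tGamma^{\rmJ(N)}$ reproduce, \emph{on the nose}, the phases defining $\pi_{M, \alpha', \beta'}$, that the reindexing $\nu \mapsto \nu + M\lambda$ used to absorb the $\sim$-discrepancy is compatible with the basis $\frake_\nu$ of $\CC[\disc\, M]$, and that the weight factor $\omega(c\tau + d)^{-2k}$ and the metaplectic sign hidden in $\rho_M(S)$ are unaffected by the $z$-shift --- the last being clear since $g_{\alpha', \beta'}$ has trivial $\omega$-component and fixes $\tau$.
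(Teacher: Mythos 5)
Your proposal is correct and follows essentially the same route as the paper: reduce to Theorem~\ref{thm:theta-functions-for-M} and Proposition~\ref{prop:tensor-decomposition-for-tGammaJ-representations}, then verify the transformation law on generators by commuting the rational Heisenberg shift $[M^{-1}\alpha', 0, 0]^\rmJ [0, M^{-1}\beta', 0]^\rmJ$ past them. The paper compresses all of your phase bookkeeping into the phrase ``a straightforward computation yields the claim,'' so your write-up is simply a more explicit version of the same argument.
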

\begin{proof}
In conjunction with Propositon~\ref{prop:tensor-decomposition-for-tGammaJ-representations} and Theorem~\ref{thm:theta-functions-for-M}, it suffices to check the action of $\Mp{2}(\ZZ)$ and $[\lambda, \mu, 0]^\rmJ$ ($\lambda, \mu \in M\ZZ^N \cap \ZZ^N$) on 
\begin{gather*}
  \theta_{M}(\tau, z) \big|_{k, M} [M^{-1} \alpha', 0, 0]^\rmJ [0, M^{-1} \beta', 0]^\rmJ
\text{.}
\end{gather*}
A straightforward computation yields the claim.
\end{proof}

\subsection{Theta decomposition}

The aim of this section to deduce theta decomposition for all Jacobi forms of index $0 < M \in \MatT{N}(\QQ)$.  We start by limiting the possible action of the center $\Mat{N}(\ZZ)$ of $\tGamma^{\rmJ(N)}$.
\begin{proposition}
\label{prop:kappa-eigenvalues}
Suppose that $\rho$ is irreducible.  If
\begin{gather*}
  \rho\big( [0,0,\kappa]^\rmJ \big)
\ne
  e\big( \frac{1}{2} \tr( M \kappa ) \big)
\text{,}
\end{gather*}
then $\rmJ_{k, M}(\rho) = \{ 0 \}$.
\end{proposition}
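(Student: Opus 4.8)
The plan is to combine Schur's lemma with the transformation law of a Jacobi form under the center of $\tGamma^{\rmJ(N)}$, which makes the argument essentially immediate. First observe that $[0,0,\kappa]^\rmJ$ is central in $\tGamma^{\rmJ(N)}$: this is clear from the product formula, since the cross terms $\lambda_1 \rT\mu_2 - \mu_1 \rT\lambda_2$ are the only obstruction to commuting and they vanish whenever one of the two factors has the form $[0,0,\kappa]^\rmJ$. As $\rho$ is irreducible and finite dimensional over $\CC$, Schur's lemma gives scalars $c(\kappa) \in \CC^\times$ with $\rho\big([0,0,\kappa]^\rmJ\big) = c(\kappa)\,\id$, and the hypothesis of the proposition means there is a $\kappa_0$ with $c(\kappa_0) \ne e\big(\tfrac{1}{2}\tr(M \kappa_0)\big)$.

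Next I would specialize the slash action to $\gamma^\rmJ = [0,0,\kappa_0]^\rmJ$. This element fixes every point of $\HS^{\rmJ(N)}$ and has trivial $\Mp{2}(\ZZ)$-part, so $\omega \equiv 1$ and $c\tau+d \equiv 1$; moreover every summand inside $\alpha_M\big([0,0,\kappa_0]^\rmJ; \tau, z\big)$ other than $-\tfrac{1}{2}\tr(M \kappa_0)$ carries a factor of $\lambda$, $\mu$, or $c$ and hence vanishes. Reading off the definition of $\,\big|_{k,M,\rho}\,$ then yields, for any $\phi :\, \HS^{\rmJ(N)} \to V(\rho)$,
\[
  \big(\phi\big|_{k,M,\rho}\,[0,0,\kappa_0]^\rmJ\big)(\tau,z)
=
  e\big(\tfrac{1}{2}\tr(M \kappa_0)\big)\, c(\kappa_0)^{-1}\, \phi(\tau,z)
\text{.}
\]

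Finally, if $\phi \in \rmJ_{k,M}(\rho)$, condition~(i) of Definition~\ref{def:jacobi-forms} applied to $[0,0,\kappa_0]^\rmJ$ forces $\big(e\big(\tfrac{1}{2}\tr(M \kappa_0)\big)\, c(\kappa_0)^{-1} - 1\big)\,\phi(\tau,z) = 0$ for all $(\tau,z) \in \HS^{\rmJ(N)}$; since the prefactor is a nonzero constant by the choice of $\kappa_0$, we conclude $\phi \equiv 0$, i.e.\ $\rmJ_{k,M}(\rho) = \{0\}$. There is no genuine obstacle here; the only thing requiring (minimal) care is checking that the non-central summands of $\alpha_M$ collapse on $[0,0,\kappa_0]^\rmJ$, which is immediate from their explicit form. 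Taken over all $\kappa$, the proposition is precisely the statement that lets one henceforth assume the center of $\tGamma^{\rmJ(N)}$ acts through the character $e\big(\tfrac{1}{2}\tr(M\,\cdot\,)\big)$, which in turn underlies the symmetry normalization of $M_\pi$ made in Section~\ref{ssec:heisenberg-representations}.
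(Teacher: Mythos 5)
Your proposal is correct and follows essentially the same route as the paper: both apply the invariance condition $\phi\big|_{k,M,\rho}\,[0,0,\kappa]^\rmJ = \phi$ to the central element, note that the slash action there reduces to multiplication by $e\big(\tfrac{1}{2}\tr(M\kappa)\big)\,\rho\big([0,0,\kappa]^\rmJ\big)^{-1}$, and conclude $\phi = 0$ when this factor is not the identity. The Schur's lemma observation is a harmless (and clarifying) addition that the paper leaves implicit.
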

\begin{proof}
Any Jacobi form satisfies
\begin{gather*}
  \phi
=
  \phi \big|_{k, M, \rho}\, [0, 0, \kappa]^\rmJ
=
  e( \tfrac{1}{2} \tr(M \kappa) ) \rho( [0, 0, -\kappa]^\rmJ ) \,
  \phi
\text{.}
\end{gather*}
This implies the statement straightforwardly.
\end{proof}

Recall that any Jacobi form $\phi$ has Fourier expansion
\begin{gather}
\label{eq:jacobi_fourierexpansion}
  \phi(\tau, z)
=
  \sum_{m \in \QQ,\, r \in \QQ^N} c(\phi; m, r)\, q^m \zeta^r
\text{,}
\end{gather}
where $q = e(\tau)$ and $\zeta^r = e( \rT r z )$.  The Fourier coefficients $c(\phi; m, r)$ vanish, if $2 \det(M) m - M^\#[r] < 0$.

We write ${\check \rho}$ for the dual of any representation $\rho$.
\begin{theorem}
\label{thm:theta-decomposition}
Let $\phi$ be a Jacobi form of irreducible type~$\rho$.  Then there is $\alpha, \beta \in \QQ^N$ such that
\begin{enumerate}[(i)]
\item There is a unitary, irreducible representation~$\rho'$ of $\Mp{2}(\ZZ)$ with $\rho \cong \rho' \otimes \rho_{M, \alpha, \beta}$.

\item We have
\begin{gather}
\label{eq:theta-decomposition}
  \phi(\tau, z)
=
  \sum_{\nu \in (M \ZZ^N + \ZZ^N) \slashdiv M \ZZ^N}
  h_\nu(\tau)
  \sum_{\substack{ (\alpha', \beta') \in (\alpha, \beta) \SL{2}(\ZZ) \\
                   \nu' \in M \ZZ^N \slashdiv (M \ZZ^N \cap \ZZ^N) }}
  \theta_{M, \nu + \nu'}^{(\alpha', \beta')}(\tau, z)\,
  \frake_{\nu + \nu'} \otimes \frake_{\alpha', \beta'}
\end{gather}
for some $(h_\nu) \in \rmM_{k - \frac{N}{2}}(\rho' \otimes {\check \rho}^{\, \prime}_M)$.

Conversely, given a vector valued modular form of weight~$k - \frac{N}{2}$ and type~$\rho \otimes {\check \rho}^{\, \prime}_M$, Equation~(\ref{eq:theta-decomposition}) defines an element of~$\rmJ_{k, M}(\rho \otimes \rho_{M, \alpha, \beta})$.
\end{enumerate}
\end{theorem}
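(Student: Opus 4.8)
The plan is to prove the theta decomposition by combining the structural results already established: the tensor decomposition of $\rho$ (Proposition~\ref{prop:tensor-decomposition-for-tGammaJ-representations}) and the fact that the functions $\theta_{M,\nu}^{(\alpha',\beta')}$ assemble into a Jacobi form of type $\rho'_M \otimes \rho_{M,\alpha,\beta}$. First I would apply Proposition~\ref{prop:tensor-decomposition-for-tGammaJ-representations}: since $\rho$ is irreducible and has central character $e(\tfrac12\tr(M\cdot))$ by Proposition~\ref{prop:kappa-eigenvalues} (otherwise $\rmJ_{k,M}(\rho)=\{0\}$ and there is nothing to prove), some $\pi_{M,\alpha,\beta}$ occurs in $\rho|_{\rmH(\tGamma^{\rmJ(N)})}$, whence $\rho \cong \rho' \otimes \rho_{M,\alpha,\beta}$ for a unitary irreducible $\rho'$ of $\Mp{2}(\ZZ)$; this gives~(i).

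For~(ii), the key idea is that the span of the theta functions $\{\theta_{M,\nu+\nu'}^{(\alpha',\beta')}\}$, for fixed $\tau$, spans (inside the space of functions of $z$) exactly the subspace picked out by the transformation behaviour under $\rmH(\tGamma^{\rmJ(N)})$ corresponding to $\rho_{M,\alpha,\beta}$. Concretely, I would expand $\phi$ in its Fourier series~\eqref{eq:jacobi_fourierexpansion} and use relations~\eqref{eq:higherjacobiforms_fourier_relation1} and~\eqref{eq:higherjacobiforms_fourier_relation2}: relation~\eqref{eq:higherjacobiforms_fourier_relation1} shows that $c(\phi;m,r)$ depends, up to the prescribed shift, only on the class of $r$ modulo $M\ZZ^N$, so the function of $z$ obtained by fixing the ``$\tau$-part'' is determined by finitely many coefficient functions of $\tau$. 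One then sets
\begin{gather*}
  h_\nu(\tau)
=
  \sum_{\substack{m,r \\ r \equiv \nu}}
  c(\phi; m, r)\, q^{m - \frac{1}{2} M^{-1}[r]}
\end{gather*}
(indexed appropriately by $\nu \in (M\ZZ^N + \ZZ^N)/M\ZZ^N$), so that substituting into the right-hand side of~\eqref{eq:theta-decomposition} and using the explicit Fourier expansion $\theta_{M,\nu}(\tau,z) = \sum_{\xi \equiv \nu} q^{\frac12 M^{-1}[\xi]}\zeta^\xi$ reproduces $\phi(\tau,z)$ term by term. The convergence and holomorphy of $h_\nu$ on $\HS$ follow from those of $\phi$; the growth condition $\|h_\nu(\tau)\| = O(1)$ as $v\to\infty$ follows from the vanishing of $c(\phi;m,r)$ when $2\det(M)m - M^\#[r]<0$, i.e.\ when $m - \tfrac12 M^{-1}[r] < 0$.

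It then remains to check that $(h_\nu)$ transforms as a vector valued modular form of weight $k - \frac N2$ and type $\rho' \otimes \check\rho^{\,\prime}_M$. This I would deduce formally: $\phi$ and $\theta_M^{(\alpha,\beta)}$ are both Jacobi forms of index $M$, of types $\rho' \otimes \rho_{M,\alpha,\beta}$ and $\rho^{\,\prime}_M \otimes \rho_{M,\alpha,\beta}$ respectively, so their ``quotient'' $(h_\nu)$, being the unique coefficient vector in the decomposition, inherits a transformation law under $\Mp{2}(\ZZ)$ in which the index-$M$ automorphy factors $\alpha_M$ and the $\rho_{M,\alpha,\beta}$-parts cancel, leaving the weight $k - \frac N2$ slash action twisted by $\rho' \otimes \check\rho^{\,\prime}_M$. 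Here I would invoke that $z\mapsto \theta_M^{(\alpha,\beta)}(\tau,z)$ has, for generic $\tau$, linearly independent components (the same eigenvalue-separation argument used in Lemma~\ref{la:permutation-of-heisenberg-irreps}), so the coefficients $h_\nu$ are uniquely determined and hence the $\Mp{2}(\ZZ)$-action on $\phi$ transfers unambiguously to them. The converse direction is then immediate: for any $(h_\nu) \in \rmM_{k-\frac N2}(\rho' \otimes \check\rho^{\,\prime}_M)$, the right-hand side of~\eqref{eq:theta-decomposition} is a finite sum of products of holomorphic functions with the correct combined transformation behaviour and growth, hence lies in $\rmJ_{k,M}(\rho' \otimes \rho_{M,\alpha,\beta})$.

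The main obstacle I expect is the bookkeeping around the cosets: one works simultaneously modulo $M\ZZ^N\cap\ZZ^N$, modulo $M\ZZ^N$, and modulo $M\ZZ^N + \ZZ^N$, and with the $\SL{2}(\ZZ)$-orbit of $(\alpha,\beta)$ modulo $(M^\ZZ\ZZ^N)^2$, and one must verify that the double sum in~\eqref{eq:theta-decomposition} is genuinely a sum over a set of representatives with no overcounting — this is where Lemma~\ref{la:integral-M-and-its-inverse} and Lemma~\ref{la:theta-series-MMZZ-relation} do the real work, identifying $M\,{}^\rmt M_\ZZ^{-1}\ZZ^N = M\ZZ^N + \ZZ^N$ and pinning down the relevant index computations. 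Verifying that the weight shift is exactly $N/2$ and that the dual representation $\check\rho^{\,\prime}_M$ (rather than $\rho^{\,\prime}_M$) appears is a matter of carefully tracking inverses through the slash action, but is routine once the indexing is fixed.
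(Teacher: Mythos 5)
Your proposal follows the paper's own proof in essentially every respect: part (i) is obtained from Proposition~\ref{prop:tensor-decomposition-for-tGammaJ-representations} exactly as in the paper, and part (ii) is proved by matching Fourier coefficients (the paper likewise fixes a lift $r$ of $\nu$, so your $h_\nu$ should be a sum over $m$ for one fixed representative of $\nu$ rather than over all $r \equiv \nu$, to avoid overcounting) and then checking transformation behaviour, which the paper dismisses as ``straightforward.'' Apart from that normalization of $h_\nu$ — a point you already flag under coset bookkeeping — this is the same argument, carried out in more detail than the paper itself provides.
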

\begin{proof}
Note that the sum $\nu + \nu'$, which occurs in the subscript, is well-defined, since $\nu'$ runs through all representatives modulo~$M \ZZ^N \cap \ZZ^N$. 

By Proposition~\ref{prop:tensor-decomposition-for-tGammaJ-representations}, there is a representation $\rho'$ such that $\rho \cong \rho' \otimes \rho_{M, \alpha, \beta}$ for some $\alpha, \beta \in \QQ^N$.  Since $\rho$ is irreducible, $\rho'$ must be irreducible, too.  We can assume that $\rho = \rho' \otimes \rho_{M, \alpha, \beta}$, thus identifying representation spaces.

The Fourier expansion of $\phi$ is uniquely determined by $c(\phi; n, r)$, where $r$ runs through a system of representatives of $(M \ZZ^N + \ZZ^N) \slashdiv M \ZZ^N$.  Fix some lift $r$ of $\nu$ to $M \ZZ^N + \ZZ^N$ and one pair $(\alpha, \beta)$.  It is straightforward to check corresponding Fourier coefficients $c(\,\cdot\,; n, r)$ of the left and right hand side.  This proves the first part of the theorem.

The second part follows directly, when checking transformation properties.
\end{proof}
We denote the induced bijection between $\rmJ_{k, M}(\rho)$ and $\rmM_{k - \frac{N}{2}}({\check \rho}^{\, \prime}_M \otimes \rho')$ by $\Theta_M$, suppressing the weight.

\subsection{Vanishing of Jacobi forms}
\label{ssec:jacobi-forms:vanishing}

We deduce vanishing statements for Jacobi forms based on their connection to vector valued elliptic modular forms.  The theta decomposition allows us to do this.

Let
\begin{gather*}
  \rd(M)
=
  \max_{\xi \in \RR^N} \min_{\xi' \in \ZZ^N} M[\pm \xi + \xi']
\text{.}
\end{gather*}
Note that $\rd(M)$ is related to the radius of a Voronoi cell of a lattice with Gram matrix~$M$.

\begin{proposition}
\label{prop:vanishing-of-jacobi-forms}
If $\phi \in \rmJ_{k, M}(\rho)$ and $c(\phi; m, r) = 0$ for all $0 \le m < \frac{k}{12} + 1 + \frac{1}{2} \rd(M)$ and all $r \in M \ZZ^N + \ZZ^N$.  Then $\phi = 0$.
\end{proposition}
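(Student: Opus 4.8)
The strategy is to transport the vanishing hypothesis on $\phi$ through the theta decomposition of Theorem~\ref{thm:theta-decomposition} and then apply the vanishing statement for vector valued elliptic modular forms, Proposition~\ref{prop:vanishing-of-vector-valued-forms}. We may assume $\rho$ is irreducible, since $\rmJ_{k,M}(\rho_1 \oplus \rho_2) = \rmJ_{k,M}(\rho_1) \oplus \rmJ_{k,M}(\rho_2)$ and the hypothesis passes to each summand. By Theorem~\ref{thm:theta-decomposition} we write $\rho \cong \rho' \otimes \rho_{M,\alpha,\beta}$ and $\phi = \sum_\nu h_\nu(\tau)\, \theta_{M,\nu}^{(\alpha',\beta')}(\tau,z)\, \frake_{\nu+\nu'} \otimes \frake_{\alpha',\beta'}$ with $(h_\nu) \in \rmM_{k-\frac{N}{2}}(\rho' \otimes \check\rho^{\,\prime}_M)$. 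It then suffices to show that $(h_\nu) = 0$, and by Proposition~\ref{prop:vanishing-of-vector-valued-forms} it suffices to show that the Fourier coefficients $c((h_\nu); m')$ vanish for all $m' < \frac{k - N/2}{12} + 1$.

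The key computation relates a Fourier coefficient of $\phi$ to a coefficient of the $h_\nu$. Expanding $\theta_{M,\nu}^{(\alpha',\beta')}(\tau,z) = \sum_{\xi \equiv \nu} q^{\frac{1}{2}M^{-1}[\xi]}\zeta^\xi$ (up to the shift by $(\alpha',\beta')$, which only translates the index $r$ inside the fixed residue class $M\ZZ^N + \ZZ^N$ and affects phases but not the size of exponents), the $q^m \zeta^r$ coefficient of $\phi$ picks out terms with $m = m' + \frac{1}{2}M^{-1}[\xi]$ for $\xi$ in the appropriate class with $\zeta$-exponent $r$. Thus, given $m' < \frac{k-N/2}{12}+1$, a nonzero contribution to $c((h_\nu); m')$ would produce a nonzero $c(\phi; m, r)$ with
\[
  m = m' + \tfrac{1}{2} M^{-1}[\xi]
    < \tfrac{k - N/2}{12} + 1 + \tfrac{1}{2} M^{-1}[\xi]
\]
for some admissible $\xi$. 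To derive a contradiction with the hypothesis we need to bound $M^{-1}[\xi]$ from above over a suitable residue class. Here is where $\rd(M)$ enters: for any target residue of $r$ modulo $M\ZZ^N + \ZZ^N$ (equivalently, for $\xi$ ranging over a coset relevant to $M^{-1}$), by definition of $\rd(M)$ we can choose the representative so that $M^{-1}[\xi] \le \rd(M^{-1})$; and one checks $\rd(M^{-1}) \le \rd(M)$ is not quite what one wants — rather, the bound should be arranged so that $\frac{1}{2}M^{-1}[\xi]$ (equivalently $\frac{1}{2}M[\cdot]$ after the identification used in the theta series) is at most $\frac{1}{2}\rd(M)$, giving $m < \frac{k}{12} + 1 + \frac{1}{2}\rd(M)$ after also absorbing the $-\frac{N/2}{12} \le 0$ term. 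Since $c(\phi; m, r) = 0$ in that range by hypothesis, all such contributions vanish, hence $c((h_\nu); m') = 0$, hence $(h_\nu) = 0$ by Proposition~\ref{prop:vanishing-of-vector-valued-forms}, hence $\phi = 0$.

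\textbf{Main obstacle.} The delicate point is the bookkeeping that converts the hypothesis "$c(\phi; m, r) = 0$ for all $m < \frac{k}{12}+1+\frac{1}{2}\rd(M)$ and all $r \in M\ZZ^N + \ZZ^N$" into "$c((h_\nu); m') = 0$ for all $m' < \frac{k-N/2}{12}+1$". One must, for each residue class $\nu$, select a lift $r$ and an index $\xi$ in the theta sum so that the quadratic contribution $\frac{1}{2}M^{-1}[\xi]$ is minimized, and verify via the definition of $\rd(M)$ that this minimum is at most $\frac{1}{2}\rd(M)$ — paying attention to whether it is $M$ or $M^{-1}$ (and $M_\ZZ$) that appears, and to the shift by $(\alpha',\beta')$. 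The inequality $\frac{k-N/2}{12} \le \frac{k}{12}$ handles the weight discrepancy for free. Once the exponents are correctly matched, the argument is just the combination of the theta decomposition with Proposition~\ref{prop:vanishing-of-vector-valued-forms}.
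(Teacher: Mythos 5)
Your overall strategy is exactly the paper's: reduce to irreducible $\rho$, pass through the theta decomposition of Theorem~\ref{thm:theta-decomposition}, and apply Proposition~\ref{prop:vanishing-of-vector-valued-forms} to the vector valued form $(h_\nu)$ of weight $k - \frac{N}{2}$, using $\frac{k - N/2}{12} \le \frac{k}{12}$ to absorb the weight shift. The bookkeeping identity you need is the one the paper records, namely that the $m$th coefficient of the $\nu$-component of $\Theta_M(\phi)$ equals $c(\phi; m + \tfrac{1}{2}M^{-1}[r + \alpha], r + \alpha)$ for a lift $r$ of $\nu$, together with relation~\eqref{eq:higherjacobiforms_fourier_relation2} to allow replacing a lift by its negative.

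However, the step you flag as the ``main obstacle'' is precisely the only place where $\rd(M)$ does any work, and you leave it unresolved --- indeed you half-suggest the wrong comparison ($\rd(M^{-1})$ versus $\rd(M)$). The claim to be proved is: every class $\nu \in (M\ZZ^N + \ZZ^N)\slashdiv M\ZZ^N$ admits a representative $\xi$ (up to sign) with $M^{-1}[\xi] \le \rd(M)$. This is closed by the substitution $\eta = M^{-1}\xi$: since $\xi$ is only determined modulo $M\ZZ^N$, the vector $\eta$ is determined modulo $\ZZ^N$, and $M^{-1}[\xi] = M[\eta]$. The definition $\rd(M) = \max_{\eta}\min_{\eta' \in \ZZ^N} M[\pm\eta + \eta']$ then hands you directly a representative with $M[\eta] \le \rd(M)$; no comparison between $\rd(M)$ and $\rd(M^{-1})$ is needed, and $M_\ZZ$ plays no role here. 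With that in place your argument goes through as written: a nonzero coefficient $c\big((h_\nu); m'\big)$ with $m' < \frac{k - N/2}{12} + 1$ would force a nonzero $c(\phi; m, r)$ at $m = m' + \tfrac{1}{2}M^{-1}[\xi] < \frac{k}{12} + 1 + \tfrac{1}{2}\rd(M)$, contradicting the hypothesis.
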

\begin{proof}
We can assume that $\rho$ is irreducible.  According to Theorem~\ref{thm:theta-decomposition}, we have $\rho \cong \rho' \otimes \rho_{M, \alpha, \beta}$ for some $\alpha, \beta \in \QQ^N$.

We apply Proposition~\ref{prop:vanishing-of-vector-valued-forms} to $\Theta_M(\phi)$.  Given $\nu \in (M \ZZ^N + \ZZ^N) \slashdiv M \ZZ^N$ and a lift~$r \in \QQ^N$ of~$\nu$, we have
\begin{gather*}
  c\big( \Theta_M(\phi)_\nu; m)
=
  \big( c(\phi; m + \tfrac{1}{2} M^{-1}[r + \alpha], r + \alpha) \big)_{r;\, \alpha, \beta}
\text{,}
\end{gather*}
where the subscripts $r$ and $\alpha, \beta$ refer to the corresponding coordinates in $\CC[\disc\,M]$ and $\CC\big[ (\alpha, \beta) \SL{2}(\ZZ) \slashdiv \sim \big]$, respectively.

Combining this with Relation~(\ref{eq:higherjacobiforms_fourier_relation2}), we see that it suffices to prove that for every $\nu \in (M \ZZ^N + \ZZ^N) \slashdiv M \ZZ^N$, there is $\xi \in \pm \nu \subset \QQ^N$ with $M^{-1}[\xi] \le \rd(M)$.  In order to connect $\nu$ to the definition of $\rd(M)$, we consider $M^{-1} \nu \in (M^{-1} \ZZ^N + \ZZ^N) \slashdiv \ZZ^N$ instead of $\nu$.  We have to find $ M^{-1} \xi \in \pm M^{-1} \nu$ with $M^{-1}[ \xi ] = M[ M^{-1} \xi ] \le \rd(M)$.  By definition of $\rd(M)$, this is possible, since $M^{-1} \xi$ is defined up to elements in $\ZZ^N$.  This finishes the proof.
\end{proof}

Given $0 < M \in \MatT{N}(\QQ)$, define
\begin{gather*}
  \md(M)
=
  \min_{g \in \GL{N}(\ZZ)} \max_{1 \le i \le N}(M[g]_{i,i})
\text{.}
\end{gather*}
This is the length of the longest vector within a basis of shortest vectors.

\begin{proposition}
\label{prop:relation-of-rd-and-md}
Given $0 < M \in \MatT{N}(\QQ)$, we have $\rd(M) \le \frac{N (N + 1)}{8} \, \md(M)$.
\end{proposition}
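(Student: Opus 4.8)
The plan is to recognize $\rd(M)$ as the squared covering radius of the lattice $\ZZ^N$ with respect to the quadratic form $M$, and then to bound this covering radius against $\md(M)$ by a nearest--plane argument applied to an $\md$\nbd minimal basis. The first point is that the ``$\pm$'' in the definition of $\rd(M)$ is immaterial: for fixed $\xi$ the set $\{M[\xi - \xi'] : \xi' \in \ZZ^N\}$ is invariant under $\xi \mapsto -\xi$, as one sees by replacing $\xi'$ by $-\xi'$. Hence
\begin{gather*}
  \rd(M)
=
  \max_{\xi \in \RR^N}\; \min_{\xi' \in \ZZ^N} M[\xi - \xi']
\text{,}
\end{gather*}
and it suffices to exhibit, for every $\xi \in \RR^N$, a vector $\lambda \in \ZZ^N$ with $M[\xi - \lambda] \le \tfrac{N(N+1)}{8}\,\md(M)$.

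Fix $g \in \GL{N}(\ZZ)$ attaining the minimum in the definition of $\md(M)$, and let $v_1, \dots, v_N$ be its columns, a $\ZZ$\nbd basis of $\ZZ^N$ with $M[v_i] = (M[g])_{i,i} \le \md(M)$ for all $i$. Orthogonalize $v_1, \dots, v_N$ by the Gram--Schmidt process for the bilinear form $(u, w) \mapsto \rT u\, M w$; this yields $v_1^*, \dots, v_N^*$, pairwise orthogonal for this form, with $v_i \in v_i^* + \lspan_\RR(v_1^*, \dots, v_{i-1}^*)$ and, since $M$ is positive definite, $M[v_i^*] \le M[v_i] \le \md(M)$. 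Given $\xi \in \RR^N$, run the nearest--plane recursion: for $i = N, N-1, \dots, 1$ subtract from the current target the multiple $c_i v_i$ of $v_i$, where $c_i$ is an integer nearest to the $v_i^*$\nbd coordinate of the current target. Because $v_i$ has no $v_j^*$\nbd component for $j > i$, this step does not change the already reduced coordinates of index $> i$, so after all $N$ steps the residual equals $\sum_i \vartheta_i v_i^*$ with $|\vartheta_i| \le \tfrac12$, while the vector subtracted lies in $g\ZZ^N = \ZZ^N$. By $M$\nbd orthogonality of the $v_i^*$,
\begin{gather*}
  M\Big[\sum_i \vartheta_i v_i^*\Big]
=
  \sum_i \vartheta_i^2\, M[v_i^*]
\le
  \tfrac14 \sum_i M[v_i^*]
\le
  \tfrac{N}{4}\,\md(M)
\le
  \tfrac{N(N+1)}{8}\,\md(M)
\text{,}
\end{gather*}
the last inequality holding because $N \ge 1$. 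This proves the proposition.

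There is no serious obstacle here; the two places deserving attention are the reduction of $\rd(M)$ to an honest covering radius (the ``$\pm$'' remark) and the verification that subtracting $c_i v_i$ leaves the higher coordinates untouched, which is exactly the defining property of the Gram--Schmidt vectors. In fact the displayed estimate gives the sharper bound $\rd(M) \le \tfrac{N}{4}\,\md(M)$. If one prefers to reach the constant $\tfrac{N(N+1)}{8}$ by a bare-hands box reduction instead, one should first replace $g$ by a $\md$\nbd minimal basis that in addition minimizes $\sum_i M[v_i]$; such a basis satisfies $|\rT v_i\, M v_j| \le \tfrac12 \sqrt{M[v_i]\, M[v_j]}$ for all $i \ne j$, since otherwise replacing the longer of $v_i, v_j$ by its difference with a suitable sign of the other would strictly shorten it without destroying $\md$\nbd minimality, and one then expands $M[\sum_i \vartheta_i v_i]$ with $|\vartheta_i| \le \tfrac12$ using this inequality together with $M[v_i] \le \md(M)$.
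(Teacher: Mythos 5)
Your proof is correct, and it takes a genuinely different route from the paper's. The paper argues by a bare-hands box reduction: after passing to a basis in which each diagonal entry $M_{i,i}$ is minimal given the previous ones (so that $\md(M) = M_{N,N}$), it derives the off-diagonal bound $|M_{i,j}| \le \tfrac{1}{2}\min\{M_{i,i}, M_{j,j}\}$, rounds each coordinate of $\xi$ into $[-\tfrac12,\tfrac12]$, and estimates $M[w] \le \tfrac14 \sum_{i,j} |M_{i,j}| \le \tfrac{N(N+1)}{8}\,\md(M)$ term by term --- essentially the alternative you sketch in your closing remark, except that the paper's off-diagonal bound is the stronger $\tfrac12\min$ rather than $\tfrac12\sqrt{\cdot\,}$ (either suffices for the count). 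Your main argument instead runs Babai's nearest-plane procedure against the $M$-Gram--Schmidt vectors $v_i^*$ of an $\md$-minimal basis; the $M$-orthogonality of the $v_i^*$ kills all cross terms and, together with $M[v_i^*] \le M[v_i] \le \md(M)$, yields the sharper bound $\rd(M) \le \tfrac{N}{4}\,\md(M)$, which trivially implies the stated one. Your preliminary observation that the $\pm$ in the definition of $\rd$ is vacuous (so that $\rd(M)$ is an honest squared covering radius) is also correct, since $M[-v]=M[v]$. The sharper constant is not merely cosmetic: the remark following the proposition indicates that the application only requires $\rd(M) < (2-\epsilon)\,\md(M)$, which the paper's constant $\tfrac{N(N+1)}{8}$ supplies only for $N \le 3$, whereas $\tfrac{N}{4} < 2$ holds for all $N \le 7$; so your estimate removes the bottleneck at this particular step of the argument (whether the remaining steps of Section~4 then extend to larger codimension is a separate question).
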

\begin{proof}
Since the proposition holds for $M[g]$, $g \in \GL{N}(\ZZ)$ if it holds for $M$, we can assume that $M_{i,i} \le M_{i + 1, i + 1}$ for $1 \le i < N$.  Further, we can assume that for all $1 \le i < N$, we have
\begin{gather*}
  M_{i,i}
=
  \min\big\{ M[g]_{i,i} \,:\, M[g]_{j,j} = M_{j,j} \text{ for $1 \le j < i$},\,
                              g \in \GL{N}(\ZZ) \big\}
\text{.}
\end{gather*}
In particular, we have $\md(M) = M_{N, N}$.  Moreover, we find that $|M_{i,j}| \le \frac{1}{2} \min\{ M_i, M_j \}$.  Indeed, suppose that this was not the case for some $i < j$.  Then we could replace the basis vector $e_j$ by $e_i \pm e_j$.  We would then have
\begin{gather*}
  M[ e_i \pm e_j ]
=
  M_i + M_j \pm 2 M_{i,j}
<
  M_j
\end{gather*}
for the right choice of sign.  This would contradict the minimality of diagonal elements of $M$.

Given any $\xi \in \RR^N$, we can find $w = \pm \xi + \xi'$ for some choice of sign and $\xi' \in \ZZ^N$ such that $|\pm \xi_i + \xi'_i| \le \frac{1}{2}$ for $1 \le i \le N$.  We then have
\begin{align*}
  \rd(M)
\le
  M[w]
& \le
  \sum_{1 \le i, j \le N} |M_{i,j}| |w_i| |w_j|
\le
  \frac{1}{4} \sum_{1 \le i, j \le N} |M_{i,j}|
\\[3pt]
& \le
  \frac{1}{4} \sum_{ 1 \le i \le j \le N } |M_{N, N}|
=
  \frac{N (N + 1)}{8} \, \md(M)
\text{.}
\end{align*}
\end{proof}
\begin{remark}
In the next section, we use $\rd(M) < (2 - \epsilon) \md(M)$ for $N < 4$.  In the case $N = 4$, the diagonal Gram matrix $M = {\rm diag}(2, 2, 2, 2)$ has $\md(M) = 2$ and $\rd(M) = 1$.
\end{remark}

\section{Special cycles}
\label{sec:special-cycles}

As announced in the introduction, we switch notation and adopt Zhang's.

Let $\cL$ be an integral, even lattice of signature $(n, 2)$, and write $\cL^\#$ for its dual.  The Grassmannian of $2$-dimensional negative subspaces of $\cL \otimes \RR$ is denoted by ${\rm Gr}^-(\cL \otimes \RR)$.  Fix a subgroup~$\Gamma$ of $\Orth{}(\cL)$ that act trivially on $\disc\, \cL := \cL^\# \slashdiv \cL$.  The quotient $X_\Gamma = \Gamma \backslash {\rm Gr}^-(\cL \otimes \CC)$ is a Shimura variety of orthogonal type, on which there are rational quadratic cycles.  Given a tuple of vectors $v = (v_1, \ldots, v_r) \in (\cL \otimes \QQ)^r$, let $Z(v) = \{ W \in {\rm Gr}^-(\cL \otimes \CC) \,:\, \lspan(v) \perp W \}$.  This cycle is non-trivial, if $\lspan(v) \subseteq \cL \otimes \QQ$ is positive.

Kudla~\cite{Ku97} built so-called special cycles from the $Z(v)$.  To each $v$, we attach a moment matrix $q_\cL(v) = \frac{1}{2} ( \langle v_i, v_j \rangle_\cL )_{1 \le i,j \le r}$, where $\langle v, w \rangle_\cL = q_\cL(v + w) - q_\cL(v) - q_\cL(w)$.  Given a positive semidefinite matrix $0 \le T \in \MatT{r}(\QQ)$ and $\lambda \in (\cL^\# \slashdiv \cL)^r$, set
\begin{gather*}
  \Omega(T, \mu)
=
  \big\{ v \in \mu + \cL^r \,:\, T = q_\cL(v) \big\}
\text{.}
\end{gather*}
The symmetries $\Gamma$ act on $\Omega(T, \mu)$, and there are finitely many orbits.  Kudla defines special cycles as
\begin{gather*}
  Z(T, \mu)
=
  \sum_{v \in \Gamma \backslash \Omega(T, \mu)}
  Z(v)
\text{.}
\end{gather*}
All cycles of this form descend to cycles on $X_\Gamma$, which we also denote by $Z(T, \mu)$.  Writing $\rk(T)$ for the rank of~$T$, we find that $Z(T, \mu)$ is a cycle of codimension~$\rk(T)$, whose class in $\CH^{\rk(T)}(X_\Gamma)_\CC$ is denoted by $\{Z(T, \mu)\}$.

The aim of this section is to prove the next theorem.
\begin{theorem}
\label{thm:main-theorem-finite-span-of-cycles}
Suppose that $\Gamma \subset \Orth{}(\cL)$ acts trivially on $\disc\, \cL$.  Then for $r < 5$, the space
\begin{gather*}
  \lspan\big( \{Z(T, \mu)\} \,:\, 0 < T \in \MatT{r}(\QQ),\, \mu \in \disc^r\,\cL \big)
\subseteq
  \CH^r(X_\Gamma)_\CC
\end{gather*}
is finite dimensional.
\end{theorem}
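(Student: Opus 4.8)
The plan is to peel off one special vector at a time: for $0<T$ a codimension-$r$ special cycle $Z(T,\mu)$ is assembled, after reorganisation, from special \emph{divisors} on a Shimura variety of signature $(n-r+1,2)$, and divisors are governed by Borcherds' theorem (the case $r=1$ of the conjecture) and the theta decomposition of Section~\ref{sec:jacobi-forms}. Quantifying this produces a Jacobi form whose Fourier coefficients are the cycle classes, and Proposition~\ref{prop:vanishing-of-jacobi-forms} then forces only finitely many classes to matter. The case $r=1$ is Borcherds' theorem itself, so assume $2\le r\le 4$. I first record a normalisation: for $g\in\GL{r}(\ZZ)$ one has $\{Z(T,\mu)\}=\{Z(T[g],\mu g)\}$, because replacing a tuple $v$ by $vg$ leaves its span — hence the cycle $Z(v)$ — unchanged and carries $\Omega(T,\mu)$ bijectively onto $\Omega(T[g],\mu g)$. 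Thus I may assume $T$ is Minkowski reduced, so that $T_{11}\le\dots\le T_{rr}$, $|2T_{ij}|\le T_{ii}$, and — here $r<5$ enters — $T_{ii}=\lambda_i(T)$ is the $i$-th successive minimum; the leading $(r-1)$-block $M$ of $T$ is then reduced as well, with $M_{ii}=\lambda_i(T)$ for $i<r$ and $\md(M)=M_{r-1,r-1}=\lambda_{r-1}(T)$.

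Fix $v_1,\dots,v_{r-1}$ with moment matrix $M>0$ and put $\cL_M=\langle v_1,\dots,v_{r-1}\rangle^{\perp}\cap\cL$, an even lattice of signature $(n-r+1,2)$, whose Shimura variety $X_{\Gamma_M}$ lies in $X_\Gamma$ in codimension $r-1$ via a map $\iota_M$; by Kudla's construction $Z(v_1,\dots,v_r)=(\iota_M)_*Z(w)$, where $w$ is the orthogonal projection of $v_r$ to $\cL_M\otimes\QQ$. Unfolding the $\Gamma$-sum and encoding the pairings $\langle v_r,v_i\rangle$ ($i<r$) in an elliptic variable $z\in\CC^{r-1}$, the series
\begin{gather*}
  \psi_M(\tau,z)=\sum_{m,\,s}\{Z(T,\mu)\}\,q^m\zeta^s\,,\qquad
  T=\begin{pmatrix}M&s\\ s^{\rmt}&m\end{pmatrix}\,,
\end{gather*}
is the $\iota_M$-pushforward of a finite sum of generating series of special divisors on $X_{\Gamma_M}$. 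By Borcherds' theorem each divisor series is a vector valued elliptic modular form of weight $1+(n-r+1)/2$, so the converse part of Theorem~\ref{thm:theta-decomposition} identifies $\psi_M$ with a Jacobi form of weight $k=1+n/2$ and index $M$, valued in $\CH^r(X_\Gamma)_\CC$; in particular $\{Z(T,\mu)\}=c(\psi_M;T_{rr},s)$.

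Applied coefficient by coefficient to $\psi_M$, Proposition~\ref{prop:vanishing-of-jacobi-forms}, together with the finite-dimensionality of the relevant space of weight-$k$, index-$M$ Jacobi forms (and the fact that $k=1+n/2$ is fixed), yields scalars — independent of the cycle — writing every $\{Z(T,\mu)\}$ as a combination of classes $\{Z(T',\mu')\}$ with the same leading block $M$ and $T'_{rr}<\tfrac{k}{12}+1+\tfrac12\rd(M)$. By Proposition~\ref{prop:relation-of-rd-and-md} and the Remark following it, $\rd(M)<(2-\epsilon)\md(M)$ for $N=r-1<4$ with $\epsilon=\epsilon(N)>0$, and $\md(M)=\lambda_{r-1}(T)$, so $T'_{rr}<\tfrac{k}{12}+1+(1-\tfrac{\epsilon}{2})\lambda_{r-1}(T)$. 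Re-reducing $T'$ and using once more that diagonal entries of a reduced form of rank $<5$ are its successive minima, one checks $\lambda_i(T')\le\lambda_i(T)$ for $i<r$ and $\lambda_r(T')\le\max(\lambda_{r-1}(T),T'_{rr})$, so that the tuple $(\lambda_r(T),\dots,\lambda_1(T))$ drops strictly in the lexicographic order whenever $\lambda_r(T)>\tfrac{2}{\epsilon}(\tfrac{k}{12}+1)$. As these tuples lie in $(\tfrac1D\ZZ_{>0})^r$ for a denominator $D$ depending only on $\cL$ and are bounded above by the initial one, the descent terminates; the surviving classes have all $T_{ii}$ — hence, by reducedness and positivity, all entries of $T$ — and all $\mu\in\disc^r\cL$ in a finite set, and they span the space in the statement.

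The heart of the matter, and where the hypothesis $r<5$ is genuinely used, is this descent: choosing a monovariant on moment matrices and verifying that $\rd(M)<(2-\epsilon)\md(M)$ — which fails at $N=4$, since $M=\diag(2,2,2,2)$ has $\rd(M)=\md(M)$ — together with the good behaviour of Minkowski reduction in ranks $\le 4$ makes it decrease. The remaining steps — the precise unfolding of the $\Gamma$-sum, identifying $\psi_M$ with a vector valued Jacobi form of the type in Theorem~\ref{thm:theta-decomposition}, and the control of denominators — are routine.
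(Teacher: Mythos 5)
Your argument is correct and rests on the same three pillars as the paper's: modularity of the generating series as a vector valued Jacobi form of weight $1+\frac{n}{2}$ and index the leading $(r-1)\times(r-1)$ block $M$ (the paper simply imports this from Zhang as Proposition~\ref{prop:zhangs-modularity}, rather than re-deriving it from Borcherds via the embedded sub-Shimura variety as you sketch); the vanishing bound $\frac{k}{12}+1+\frac{1}{2}\rd(M)$ of Proposition~\ref{prop:vanishing-of-jacobi-forms}; and the comparison $\rd(M)\le\frac{N(N+1)}{8}\md(M)<2\,\md(M)$ for $N=r-1\le 3$ from Proposition~\ref{prop:relation-of-rd-and-md}, which is where $r<5$ enters in both treatments. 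The only genuine divergence is the bookkeeping of the final descent. The paper (Theorem~\ref{thm:generators-for-spans-of-special-cycles}) fixes a linear functional $f$ annihilating the putative finite generating set and runs a well-founded induction on the diagonal $(M_{i,i})$ of the $(r-1)$-block in the componentwise order, the key device being the coordinate swap that rewrites $c\big(f(\theta_{\mu,\mu',M});m,p\big)$ as a coefficient of $f(\theta_{\cdot,\cdot,M'})$ with $M'_{N,N}=m<M_{N,N}$. You instead descend directly on span membership, Minkowski-reducing the full $T$ at each step and using the lexicographic order on its successive minima as monovariant (legitimately invoking that diagonal entries of a reduced form agree with the successive minima only in rank $\le 4$). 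The two schemes are interchangeable --- yours makes termination more transparent, the paper's avoids re-reducing $T$ --- and your quantitative verification that the tuple drops strictly once $\lambda_r(T)$ exceeds $\frac{2}{\epsilon}(\frac{k}{12}+1)$ is sound. One shared omission worth noting: the low Fourier coefficients produced by the vanishing argument include those indexed by singular $T'\ge 0$, whose classes (defined via the tautological class) must be adjoined to the generating set; since only finitely many occur for each surviving $M$, this does not affect finite dimensionality, but neither your write-up nor the paper's proof of Theorem~\ref{thm:generators-for-spans-of-special-cycles} addresses it explicitly.
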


Given $M \in \MatT{r - 1}(\QQ)$, Zhang formed the partial generating series 
\begin{gather}
\label{eq:def:theta-series}
  \theta_{\mu, \mu', M}
=
  \sum_{\substack{ 0 \le n \in \ZZ \\ p \in \ZZ^{r - 1} }}
  Z(T, (\mu, \mu')) \, q^n \zeta^p
\text{,}
\end{gather}
where $T = \left(\begin{smallmatrix} n & \frac{1}{2} \rT p \\ \frac{1}{2} p & M \end{smallmatrix}\right)$.

Based on results in~\cite{Bo99, Bo00}, Zhang proved in his thesis~\cite{Zh09} that $\theta_{\mu, \mu', M}$ is a vector valued Jacobi form of weight $1 + \frac{n}{2}$.  For the convenience of the reader, we restate his result.
\begin{proposition}[{\cite{Zh09}, Proposition~2.6 and (2.10)}]
\label{prop:zhangs-modularity}
The right hand side of~(\ref{eq:def:theta-series}) is absolutely convergent, and we have
\begin{gather*}
  \sum_{\substack{ \mu \in \disc\, \cL \\ \mu' \in (\disc\, \cL)^{r -1} }}
  \theta_{\mu, \mu', M} \,
  \frake_{\mu}
\in
  \rmJ_{1 + \frac{n}{2}, M}(\rho_\cL)
\text{,}
\end{gather*}
where $\rho_{\cL, 2}$ is the Weil representation on $\Sp{2}(\ZZ)$ associated to~$\cL$ (see~\cite{Zh09}).
\end{proposition}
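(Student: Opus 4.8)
We would establish this along Zhang's lines, reducing it to Borcherds's modularity theorem for special divisors (\cite{Bo99, Bo00}, the case $r = 1$ of Kudla's conjecture) by slicing off all but one of the $r$ moment-variables. Fix $0 < M \in \MatT{r-1}(\QQ)$ and $\mu' \in (\disc\,\cL)^{r-1}$. There are only finitely many $\Gamma$-orbits of tuples $(v_2, \dots, v_r) \in \mu' + \cL^{r-1}$ with moment matrix $M$, and we fix a set of representatives. For each representative the span $W = \lspan(v_2, \dots, v_r)$ is a positive definite rational subspace of dimension $r - 1$, the sublattice $\cL' = \cL \cap W^\perp$ is even of signature $(n - r + 1, 2)$, and the analytic set $Z(v_2, \dots, v_r) \subset X_\Gamma$ is the image of the orthogonal Shimura variety $X'$ attached to $\cL'$ (after passing to the induced arithmetic group, which is harmless for the Weil-representation formalism).

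The plan is then: (i) decompose $Z(T, (\mu, \mu'))$, for $T = \left(\begin{smallmatrix} n & \frac{1}{2} \rT p \\ \frac{1}{2} p & M \end{smallmatrix}\right)$, according to this set of representatives; for a fixed representative the inner sum over $v_1 \in \mu_1 + \cL$ with $\langle v_1, v_j \rangle = p_{j-1}$ and $\langle v_1, v_1 \rangle = 2n$ runs over a coset of $\cL'$ and exhibits $Z(v_1, \dots, v_r)$ as the push-forward along $X' \hookrightarrow X_\Gamma$ of the special divisor on $X'$ attached to the $W^\perp$-component of $v_1$, a divisor whose ``discriminant'' equals $(2\det(M)\,n - M^\#[p])/(2\det M)$; (ii) invoke Borcherds's theorem to see that the generating series in $q$, over this discriminant, of these special divisors on $X'$ (pushed forward and summed over the representatives) is a vector valued elliptic modular form of weight $1 + \tfrac{n - r + 1}{2}$ and Weil-representation type for $\cL'$; (iii) note that the remaining dependence on the cross terms $p$ --- equivalently, on the coset of $v_1$ modulo $\cL'$ --- is encoded exactly by the theta functions $\theta_{M, \nu}(\tau, z)$ of Theorem~\ref{thm:theta-functions-for-M}, so that $\theta_{\mu, \mu', M}$ takes the shape $\sum_\nu h_\nu(\tau)\, \theta_{M, \nu}(\tau, z)$ with $(h_\nu)$ the modular form of step (ii).

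It then remains to apply the converse half of the theta decomposition, Theorem~\ref{thm:theta-decomposition}: a sum $\sum_\nu h_\nu\, \theta_{M, \nu}$ with $(h_\nu)$ modular of weight $k - \tfrac{r-1}{2}$ and the appropriate type is a Jacobi form of weight $k$ and index $M$. With $k - \tfrac{r-1}{2} = 1 + \tfrac{n - r + 1}{2}$ this gives $k = 1 + \tfrac{n}{2}$; and the $\tGamma^{\rmJ(r-1)}$-type obtained by combining the $\cL'$-Weil representation with the standard pieces $\rho^{\,\prime}_M \otimes \rho_{M, \alpha, \beta}$ of Section~\ref{ssec:heisenberg-representations} is precisely the $\cL$-Weil representation $\rho_\cL$, so summing over $\mu$ against $\frake_\mu$ yields the asserted element of $\rmJ_{1 + n/2, M}(\rho_\cL)$. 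Absolute convergence reduces, orbit by orbit, to the convergence of Borcherds's series together with a polynomial bound on $\#\big( \Gamma \backslash \Omega(T, (\mu, \mu')) \big)$. The only deep input is Borcherds's theorem, so the genuine difficulty is entirely concentrated in the $r = 1$ case; the remaining work --- matching weights, lattices, and Weil representations, and checking that the discriminant-$0$ term of Borcherds's series accounts correctly for the singular $T$ --- is bookkeeping, and complete details are in \cite[\S2]{Zh09}.
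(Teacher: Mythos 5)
The paper gives no proof of this proposition at all: it is quoted verbatim from Zhang's thesis (Proposition~2.6 and~(2.10) of \cite{Zh09}), where it is established exactly as you describe --- by slicing off the last $r-1$ moment variables, applying Borcherds's modularity theorem on the sub-Shimura variety attached to $\cL' = \cL \cap W^\perp$, and reassembling via the theta decomposition. Your outline is a correct reconstruction of that argument (with the weight count $1 + \tfrac{n-r+1}{2} + \tfrac{r-1}{2} = 1 + \tfrac{n}{2}$ checking out), so it takes essentially the same route as the source the paper relies on.
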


\begin{proposition}
\label{prop:span-with-min-bound}
Given any $0 < B \in \RR$, the space
\begin{align*}
  \lspan\Big( \{ Z\big(T, (\mu, \mu')\big) \} \,:\;\,
&
              T = \left(\begin{smallmatrix} m & \frac{1}{2} \rT p \\ \frac{1}{2} p & M \end{smallmatrix}\right) \in \MatT{r}(\QQ),\,
              \mu \in \disc\,\cL, \mu' \in (\disc\,\cL)^{r - 1},
\\[2pt]
&
              m, \md(M) < B + \tfrac{1}{2} \rd(M)
  \Big)
\end{align*}
is finite dimensional, if $r < 5$.
\end{proposition}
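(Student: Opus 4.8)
The plan is to show that the generating set in the statement, though a priori infinite, spans the same subspace of $\CH^r(X_\Gamma)_\CC$ as a \emph{finite} collection of cycle classes. The argument uses no modular forms: it combines the geometric estimate of Proposition~\ref{prop:relation-of-rd-and-md} with the $\GL{r}(\ZZ)$-invariance of the classes $\{Z(T,\mu)\}$ and the fact that moment matrices of vectors in $\cL^\#$ have uniformly bounded denominators.

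First I would bound the parameters that can occur. Put $N = r - 1$; since $r < 5$ we have $N \le 3$, so Proposition~\ref{prop:relation-of-rd-and-md} yields $\rd(M) \le \tfrac{N(N+1)}{8}\md(M) \le \tfrac32\md(M)$ for every index $M$ in the generating set. Substituting this into $\md(M) < B + \tfrac12\rd(M)$ gives $\md(M) < B + \tfrac34\md(M)$, hence $\md(M) < 4B$, and then $\rd(M) < 6B$ and $m < B + \tfrac12\rd(M) < 4B$. This is precisely the point where $r < 5$ is used: for $r = 5$ one has $N = 4$ and $\tfrac{N(N+1)}{8} = \tfrac52 > 2$, and the inequality no longer bounds $\md(M)$.

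Next I would promote ``$\md(M)$ bounded'' to ``$M$ has bounded entries''. For any $v = (v_1,\ldots,v_r) \in (\cL\otimes\QQ)^r$ and any $h \in \GL{r}(\ZZ)$ one has $\lspan(vh) = \lspan(v)$, hence $Z(vh) = Z(v)$, while $q_\cL(vh) = \rT h\, q_\cL(v)\, h$ and $vh$ lies in the coset indexed by $\mu h$; since $v \mapsto vh$ is a $\Gamma$-equivariant bijection $\Omega(T,\mu) \to \Omega(\rT h T h, \mu h)$, summing over $\Gamma$-orbits gives $Z(\rT h T h, \mu h) = Z(T,\mu)$. Applying this with $h = \left(\begin{smallmatrix}1 & 0 \\ 0 & g\end{smallmatrix}\right)$ for $g \in \GL{N}(\ZZ)$ replaces $(m,p,M,\mu')$ by $(m, \rT g p, M[g], \mu' g)$ and leaves $\md(M)$, $\rd(M)$ and $m$ unchanged; choosing $g$ with $\max_i M[g]_{i,i} = \md(M) < 4B$ shows that the span is already generated by those members of the family for which, in addition, $M_{i,i} < 4B$ for all $i$.

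Finally I would conclude by positivity and integrality. We may assume each remaining $T$ is positive definite: otherwise $Z(T,(\mu,\mu'))$ is either a cycle of codimension $<r$, hence not a class in $\CH^r$, or $T$ has a negative eigenvalue, in which case $\lspan(v)^\perp$ contains no negative definite $2$-plane for any $v \in \Omega(T,(\mu,\mu'))$ and therefore $\{Z(T,(\mu,\mu'))\} = 0$. For $T > 0$ we have $|T_{i,j}| \le \sqrt{T_{i,i}T_{j,j}} < 4B$, so every entry of the surviving matrices $T$ lies in $(-4B,4B)$; moreover, since $\cL$ is even and integral there is $D = D(\cL) \in \ZZ_{>0}$ with $D\cL^\# \subseteq \cL$, whence $\langle v_i,v_j\rangle_\cL \in \tfrac1D\ZZ$ for $v_i,v_j \in \cL^\#$ and every entry of $T = q_\cL(v)$ lies in the fixed lattice $\tfrac1{2D}\ZZ$. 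A symmetric $r\times r$ matrix with entries in $\tfrac1{2D}\ZZ \cap (-4B,4B)$ can take only finitely many values, and $\disc\,\cL$, hence $(\disc\,\cL)^r$, is finite, so only finitely many pairs $\big(T,(\mu,\mu')\big)$ occur among the reduced generators; therefore the span is finite dimensional. The step that needs the most care is the reduction in the previous paragraph: one must verify that conjugating $T$ by $\GL{r}(\ZZ)$ to make the index reduced neither enlarges denominators --- which is immediate since $\GL{r}(\ZZ)$ preserves $\cL^r$ --- nor moves the generator outside the prescribed family --- which follows from the $\GL{N}(\ZZ)$-invariance of $\rd$ and $\md$ --- once the equivariance $Z(\rT h T h, \mu h) = Z(T,\mu)$ has been established.
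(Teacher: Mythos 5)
Your proof is correct and follows essentially the same route as the paper: $\GL{r}(\ZZ)$-invariance of the cycles, the bound $\rd(M)\le\frac{N(N+1)}{8}\md(M)<2\,\md(M)$ for $N=r-1\le 3$ to turn the hypothesis into an absolute bound $\md(M)<4B$, and bounded denominators of moment matrices to conclude finiteness; you also correctly locate where $r<5$ enters. The only (harmless) deviation is that the paper controls $p$ by reducing it modulo the column space of $M$ under the $\ZZ^{r-1}$-translations, whereas you bound it directly via the $2\times 2$ principal minors of the positive semidefinite matrix $T$ --- both work.
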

\begin{proof}
Because of the $\GL{r}(\ZZ)$ invariance of $Z(T, (\mu, \mu'))$ and because the denominator of $q_\cL(\lambda)$, $\lambda \in \mu$ is bounded, it is sufficient to consider cycles associated to $\GL{r-1}(\ZZ) \ltimes \ZZ^{r-1}$ classes
\begin{gather*}
 \Big\{ \left(\begin{smallmatrix} m                      & \rT (\rT g p + \lambda M[g]) \\
                                  \rT g p + \lambda M[g] & M[g]
        \end{smallmatrix}\right)
   \,:\, g \in \GL{r - 1}(\ZZ),\, \lambda \in \ZZ^{r - 1}
 \Big\}
\end{gather*}
of positive matrices with bounded denominator.

First of all there are only finitely many $\GL{r-1}(\ZZ)$ classes of matrices $M$ with bounded denominator that satisfy $\md(M) < B + \frac{1}{2}\rd(M)$.  This follows from Proposition~\ref{prop:relation-of-rd-and-md}, and the fact that, for any $B'$, there are only finitely many $\GL{r - 1}(\ZZ)$ classes with bounded denominator and $\md(M) < B'$.

Next, we can assume that $p$ is reduced with respect to the column space of~$M$, so that there are only finitely many $p$ that can occur for any given~$M$.  Only finitely many $m$ occur for given $M$ and $p$, since $0 < m < B + \frac{1}{2}\rd(M)$.  This completes the proof.
\end{proof}

\begin{theorem}
\label{thm:generators-for-spans-of-special-cycles}
We have
\begin{multline*}
  \lspan\big( \{Z(T, \mu)\} \,:\, 0 < T \in \MatT{r}(\QQ),\, \mu \in (\disc\,\cL)^r \big)
\\[3pt]
=
  \lspan\Big( \{Z(T, \mu)\} \,:\,
              0 < T = \left(\begin{smallmatrix} m & \frac{1}{2} \rT p \\
                                                \rT p & M \end{smallmatrix}\right)
                      \in \MatT{r}(\QQ),\,
              \mu \in (\disc\,\cL)^r,\;
\\
              m, \md(M) < 1 + \frac{2 + 2 n}{24} + \tfrac{1}{2} \rd(M)
        \Big)
\text{.}
\end{multline*}
\end{theorem}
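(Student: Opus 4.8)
The inclusion $\supseteq$ is trivial, so the plan is to show that every class $\{Z(T,\mu)\}$ with $0 < T \in \MatT{r}(\QQ)$ lies in the span of the classes in the range on the right-hand side. Two facts drive the argument. First, the cycles are $\GL{r}(\ZZ)$-invariant: for $g \in \GL{r}(\ZZ)$ one has $q_\cL(vg) = q_\cL(v)[g]$ and $Z(vg) = Z(v)$, so $\{Z(T[g],\mu g)\} = \{Z(T,\mu)\}$; here the hypothesis that $\Gamma$ act trivially on $\disc\,\cL$ is what lets one match up the relevant orbits. In particular $T$ may always be replaced by a Minkowski-reduced representative. Second, Zhang's partial modularity (Proposition~\ref{prop:zhangs-modularity}) together with the Jacobi vanishing result (Proposition~\ref{prop:vanishing-of-jacobi-forms}) gives a way of reducing a single diagonal entry of $T$ at a time.

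I would phrase the second point as a \emph{one-variable reduction}. Fix a positive definite $M \in \MatT{r-1}(\QQ)$ of bounded denominator and assemble Zhang's generating series~\eqref{eq:def:theta-series} over all top-left entries and off-diagonal blocks into one vector-valued function $\Phi_M$, whose $(m,p)$-th Fourier coefficient records the classes $\{Z(\left(\begin{smallmatrix} m & \frac12\rT p \\ \frac12 p & M\end{smallmatrix}\right),\mu)\}$ for all $\mu \in (\disc\,\cL)^r$. By Proposition~\ref{prop:zhangs-modularity}, $\Phi_M$ is a Jacobi form of weight $k := 1+\frac n2$ and index $M$ for a suitable finite-dimensional unitary representation built from the Weil representation of $\cL$. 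Its space is finite-dimensional, and by Proposition~\ref{prop:vanishing-of-jacobi-forms} the evaluation of Fourier coefficients at the indices $0 \le m < \frac k{12}+1+\frac12\rd(M)$ is injective on it; hence every Fourier coefficient of $\Phi_M$ is a $\CC$-linear combination of those below that bound. Reading off components, each $\{Z(T,\mu)\}$ whose bottom-right $(r-1)$-block equals $M$ is a linear combination of classes with the same block $M$ and with top-left entry strictly below $\frac k{12}+1+\frac12\rd(M)$. By $\GL{r}(\ZZ)$-invariance the same holds with ``top-left entry'' replaced by any prescribed diagonal entry and ``bottom-right block'' by the Gram matrix of the complementary variables.

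The theorem then follows by iterating this reduction and always attacking the largest diagonal entry. Given $0 < T$, Minkowski-reduce it, so that its diagonal reads $t_1 \le \cdots \le t_r$ and every consecutive principal submatrix is again reduced; in particular the top-left $(r-1)$-block $M''$ satisfies $\md(M'') \le t_{r-1}$. If $t_r$ does not already meet the bound of the theorem, apply the one-variable reduction to the last variable: $\{Z(T,\mu)\}$ becomes a finite combination of classes in which the last diagonal entry has been pushed below $\frac k{12}+1+\frac12\rd(M'')$, which by Proposition~\ref{prop:relation-of-rd-and-md} is at most $\frac k{12}+1+\frac{(r-1)r}{16}\,t_{r-1}$. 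Re-reduce each resulting matrix and repeat. When no entry can be reduced any further the process stops, leaving $\{Z(T,\mu)\}$ written as a finite linear combination of classes $\{Z(T',\mu')\}$ with $m$ and $\md(M)$ both below $\frac k{12}+1+\frac12\rd(M)$, as desired.

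The crux --- and the one place where $r < 5$ is genuinely needed --- is the termination of this iteration. The step above trades the largest entry $t_r$ for something $< \frac k{12}+1+\frac{(r-1)r}{16}\,t_{r-1}$, and the coefficient $\frac{(r-1)r}{16}$ is strictly less than $1$ precisely when $r \le 4$; equivalently, for $N = r-1 < 4$ Proposition~\ref{prop:relation-of-rd-and-md} yields $\rd(M'') < (2-\epsilon)\md(M'')$ with $\epsilon > 0$. Hence, as long as the largest diagonal entry exceeds a constant depending only on $n$ and $r$, it strictly decreases under the step (in the lexicographic order on the Minkowski-reduced diagonal), and since the diagonal entries are positive with uniformly bounded denominators the procedure halts after finitely many rounds. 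For $r \ge 5$ the coefficient $\frac{(r-1)r}{16}$ exceeds $1$, the entry one introduces can be larger than the one removed, and this monotonicity fails --- which is exactly why the present method is confined to codimension below $5$. A subsidiary, purely bookkeeping matter is to keep track, through the permutations and successive reductions, of which $(r-1)$-block is to play the role of $M$ in the theorem's bound, and to check that the terminal cycles satisfy $m,\md(M) < \frac k{12}+1+\frac12\rd(M)$ rather than merely having small diagonal; this is routine granted the standard facts on principal submatrices of Minkowski-reduced forms used above. Once the theorem is established, combining it with Proposition~\ref{prop:span-with-min-bound} (applied with $B = \frac k{12}+1$) immediately yields the finite-dimensionality asserted in Theorem~\ref{thm:main-theorem-finite-span-of-cycles}.
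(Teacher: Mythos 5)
Your strategy is in essence the paper's: Zhang's modularity, the Jacobi vanishing result, $\GL{r}(\ZZ)$\nbd invariance to permute the distinguished variable, and an induction on diagonal entries, recast in dual form (you write each Fourier coefficient of $\Phi_M$ as a fixed linear combination of the low ones using finite\nbd dimensionality of $\rmJ_{k,M}(\rho)$, where the paper instead takes a functional vanishing on the right\nbd hand span and shows it annihilates every $f(\theta_{\mu,\mu',M})$). The genuine weakness is your termination step. Your iteration stops only once the largest diagonal entry falls below a constant of the order of $4(\tfrac{k}{12}+1)$, and the terminal classes then merely have uniformly bounded diagonal; they need \emph{not} satisfy $m,\md(M)<\tfrac{k}{12}+1+\tfrac12\rd(M)$, because $\rd(M)$ can be far smaller than that constant (already for $r=2$ and $M=(2c)$ one has $\rd(M)=c\slashdiv 2$, so a terminal block with $\md(M)$ near $4(\tfrac{k}{12}+1)$ violates the required inequality). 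Closing this is not bookkeeping: the correct stopping criterion is $\md(M)<B+\tfrac12\rd(M)$ itself. With that criterion the step always makes progress --- if $\md(M)\ge B+\tfrac12\rd(M)$, the new entry $m'<B+\tfrac12\rd(M)\le\md(M)=t_{r-1}\le t_r$ strictly shrinks the diagonal multiset --- and at termination \emph{both} inequalities of the theorem hold. That is exactly the paper's induction, and it uses no comparison between $\rd$ and $\md$ whatsoever.

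Consequently your localization of the hypothesis $r<5$ is also off. This theorem, as the paper proves it, holds for every $r$; the restriction enters only in Proposition~\ref{prop:span-with-min-bound}, where one must deduce from $\md(M)<B+\tfrac12\rd(M)$ that $\md(M)$ is bounded, and this is where $\rd(M)\le\tfrac{N(N+1)}{8}\md(M)$ with $\tfrac{N(N+1)}{16}<1$, i.e.\ $N=r-1\le 3$, is used. Your argument imports $r<5$ into the present theorem only because you use the $\rd$--$\md$ bound of Proposition~\ref{prop:relation-of-rd-and-md} to drive the progress of the iteration; with the stopping criterion above that bound is not needed here at all.
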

\begin{proof}
Assume that the above is not true.  Then there is a functional~$f$ on $\CH^r(X_\Gamma)_\CC$ that vanishes on the right hand side but not on the left hand side.  Write $f(\theta_{\mu, \mu', M})$ for
\begin{gather*}
  \sum_{\substack{ 0 \le m \in \QQ \\ p \in \QQ^{r - 1} }}
  f\big( Z(T, (\mu, \mu')) \big) \, q^m \zeta^p
\text{.}
\end{gather*}
By choice of $f$, we have $c\big( f(\theta_{\mu, \mu', M}); m, p \big) \ne 0$ for some $m, p$, and $M$.  We show that this cannot be.

Set $B = 1 + \frac{2 + 2m}{24}$.  We consider $M$ with $\md(M) \le B + \frac{1}{2} \rd(M)$.  Then $c\big( f(\theta_{\mu, \mu', M}); m, p \big)$ vanishes by assumption for all $m < 1 + \frac{2 + 2 n}{24} + \rd(M)$.  By Theorem~\ref{prop:zhangs-modularity} and Proposition~\ref{prop:vanishing-of-jacobi-forms}, we find that $f(\theta_{\mu, \mu', M}) = 0$.

Next, we use induction on $(M_{i,i})_{1 \le i \le N} \in \ZZ^N$ to show that $f(\theta_{\mu, \mu', M}) = 0$.  We impose the componentwise comparison on $\ZZ^N$.  In other words, for fixed $M$ with $\md(M) > B + \frac{1}{2} \rd(M)$, we suppose that $f(\theta_{\mu_2, \mu'_2, M_2}) = 0$, whenever $(M_2)_{i,i} \le M_{i,i}$ for all $1 \le i \le N$, and there is one $1 \le i \le N$ such that $(M_2)_{i,i} < M_{i,i}$.

We can make the same assumptions as in the proof of Proposition~\ref{prop:relation-of-rd-and-md}.  That is, we can assume that $M_{i,i} < M_{i + 1, i + 1}$ for $1 \le i < N$, and for all $1 \le i \le N$
\begin{gather*}
  M_{i,i}
=
  \min\big\{ M[g]_{i,i} \,:\, M[g]_{j,j} = M_{j,j} \text{ for $1 \le j < i$},\,
                              g \in \GL{N}(\ZZ) \big\}
\text{.}
\end{gather*}

We want to show that $f(\theta_{\mu, \mu', M})$ vanishes using Proposition~\ref{prop:vanishing-of-jacobi-forms}.  We must argue that $c\big( f(\theta_{\mu, \mu', M}); m, p \big) = 0$ for all $m < B + \frac{1}{2} \rd(M) < \md(M)$.  For any such $m$ we have
\begin{gather*}
  c\big( f(\theta_{\mu, \mu',\, M}); m, p \big)
=
  c\big( f(\theta_{\mu'_{r - 1}, (\mu'_1, \ldots, \mu'_{r - 2}, \mu),\, M'}); M_{N, N}, p' \big)
\text{,}
\end{gather*}
where $M'$ has diagonal elements $M'_{i,i} = M_{i,i}$ if $i \ne N$, and $M'_{N,N} = m < \md(M) = M_{N, N}$.  Thus we can employ the induction hypothesis to prove that the right hand side is zero.  This way, we establish the hypothesis of Proposition~\ref{prop:vanishing-of-jacobi-forms}, and finish the induction.

We have shown $f( \theta_{\mu, \mu', M} ) = 0$ for all $M$.  This contradicts the choice of $f$, and finishes the proof.
\end{proof}

\begin{proof}[{Proof of Theorem~\ref{thm:main-theorem-finite-span-of-cycles}}]
Combine Theorem~\ref{thm:generators-for-spans-of-special-cycles} and Proposition~\ref{prop:span-with-min-bound}.
\end{proof}

\bibliographystyle{amsalpha}
\bibliography{bibliography}

\end{document}